\documentclass[twoside, 11pt, reqno]{article}

\makeatletter

\usepackage{graphicx}   
\usepackage{subcaption} 
\usepackage{tikz}
\usetikzlibrary{arrows.meta, positioning}
\usepackage[numbers]{natbib}
\usepackage{amsfonts}
\usepackage{amsmath}
\usepackage{amsthm}
\usepackage{amssymb}
\usepackage{color}
\usepackage{graphicx}
\usepackage{hyperref}
\hypersetup{
    colorlinks=true, 
    linktoc=all,     
    linkcolor=blue,  
}

\usepackage{comment}
 \usepackage[top=0.75in, bottom=0.75in, left=0.75in, right=0.75in]{geometry}

\usepackage{algorithm}     
\usepackage{algpseudocode} 

\usepackage[shortlabels]{enumitem}
\usepackage{mathrsfs}  
\usepackage{mathtools}
\usepackage{xcolor}
\usepackage{enumitem}

\usepackage{multirow}

\usepackage{dsfont}

\numberwithin{equation}{section}

\theoremstyle{plain}

\newtheorem{theorem}{Theorem}[section]
\newtheorem{lemma}{Lemma}[section]
\newtheorem{prop}{Proposition}[section]
 \newtheorem{corollary}{Corollary}[section]
\newtheorem{assumption}{Assumption}[section]

\theoremstyle{definition}

\newtheorem{remark}[theorem]{Remark}

\numberwithin{table}{section}

\setlist[enumerate]{topsep=4pt,itemsep=0pt,
}

\setlist[itemize]{itemsep=0pt,
topsep=4pt,
}

\newcommand{\cA}{\mathcal{A}}

\newcommand{\cF}{\mathcal{F}}

\newcommand{\cH}{\mathcal{H}}

\newcommand{\cL}{\mathcal{L}}
\newcommand{\cM}{\mathcal{M}}

\newcommand{\cP}{\mathcal{P}}

\newcommand{\cW}{\mathcal{W}}
\newcommand{\cX}{\mathcal{X}}
\newcommand{\cY}{\mathcal{Y}}

\newcommand{\E}{\mathbb{E}}

\newcommand{\R}{\mathbb{R}}

\def \proof{{\noindent \bf Proof. }}
\def \eproof{\hbox{ }\hfill$\Box$}

\newcommand{\set}[1]
    {\ensuremath{\{ #1 \}}}
    
\newcommand{\HP}[1] 
    {\ensuremath{\mathscr{H}^{#1}}}

\newcommand{\esp}[1]{\ensuremath{\mathbb{E} \!\! \left[#1\right] }}

\renewcommand{\Xi}[1]{X_{i #1}}

\newcommand{\vep}{\varepsilon}

\newcommand{\N}{\mathbb{\N}}

\newcommand{\Tr}{\operatorname{tr}}

\newcommand{\cWtwo}{{\mathcal{W}_{2, L^2}}}
\newcommand{\cWinf}{{\mathcal{W}_{2, L^\infty}}}

\newcommand{\statespace}{{\mathbb{R}^d}}

\newcommand{\be}{\begin{eqnarray}}
\newcommand{\ee}{\end{eqnarray}}
\usepackage{bbm}

\def\Terminal{{g}}

\def\a{\alpha}
\def\cA{\mathcal{A}}

\def\cF{\mathcal{F}}

\def\cH{\mathcal{H}}

\def\cL{\mathcal{L}}
\def\cM{\mathcal{M}}

\def\cP{\mathcal{P}}

\def\cW{\mathcal{W}}
\def\cX{\mathcal{X}}

\def\cY{\mathcal{Y}}

\def\d{{\mathrm{d}}}

\def\fb{\psi^W}

\def\bS{{\mathbf{S}}}

\def\sE{{\mathbb{E}}}
\def\sF{{\mathbb{F}}}

\def\sN{{\mathbb{N}}}
\def\sP{\mathbb{P}}

\def\sR{{\mathbb R}}

\def\n{{\mathfrak{n}}}
\def\bmdim{{d^W}}
\def\pathspace{{\mathscr{C}([0,T]; \mathbb{R}^d)}}
\def\upl{{ {}^\ell\! }}

\DeclareMathAlphabet{\mymathbb}{U}{bbold}{m}{n} 

\newcommand{\lx}{{}^\ell\!\mathcal{X}}
\newcommand{\es}{ \bar{\mathcal{X}}}
\newcommand{\ces}{{}^c\!\bar{\mathcal{X}}}
\newcommand{\les}{{}^\ell\!\bar{\mathcal{X}}}

\def\ra{{{\rightarrow}}}

\newcommand{\xinyu}[1]{\textcolor{violet}{\textsf{[Xinyu: #1]}}}
\definecolor{jfc}{HTML}{196f3d} 
\usepackage{titlesec}

\titleformat{\paragraph}[runin]
  {\normalfont\normalsize\bfseries}
  {\theparagraph}
  {1em}
  {}
  [.]
\title{Numerical Approximation for Path-Dependent McKean--Vlasov Control with Non-Asymptotic Error Estimates
}

\author{Olivier Bokanowski\thanks{University Paris Cité, 
Laboratoire Jacques-Louis Lions, {olivier.bokanowski@u-paris.fr}.
This research benefited from the support of the FMJH Program PGMO and from the support to this program from EDF.}
\and
Jean-François Chassagneux\thanks{ENSAE, {jean-francois.chassagneux@ensae.fr}.  This work was supported by the Labex Ecodec (reference project ANR-11-LABEX-0047) and by   the ``Chaire Futures of Quantitative Finance''.}
\and
Xinyu Li\thanks{Corresponding author.} \thanks{University of Oxford, Erlangen AI Hub; xinyu.li@maths.ox.ac.uk.  This work was  supported by EPSRC grant EP/Y028872/1, Mathematical Foundations of Intelligence: An Erlangen Programme for AI.}
\and
Christoph Reisinger\thanks{University of Oxford, {christoph.reisinger@maths.ox.ac.uk}}}

\begin{document}

\maketitle 
\begin{abstract}
Path-dependent McKean--Vlasov (MKV) control models large interacting populations with 
history-dependent dynamics and costs. 
This paper develops a unified approximation-and-learning framework for
continuous time path-dependent MKV problem 
under open-loop controls. First, an Euler 
discretization scheme with piecewise-constant controls is shown to achieve a non-asymptotic 
 error of $O(h^{1/4})$. Second, we establish a discrete dynamic programming principle 
and prove value equivalence between open-loop and history-dependent feedback controls, 
enabling optimization on a reduced filtration. Third, an interacting particle system is 
introduced to approximate the continuous-time value, yielding an overall error bound of 
$O(h^{1/4}) + O(M^{-\gamma})$ for $M$ particles and an explicitly given $\gamma > 0$.
Finally, we propose a fully implementable 
neural-network policy-gradient method using pathwise features. Numerical experiments, 
including a path-dependent linear-quadratic benchmark, demonstrate the effectiveness of the algorithm.

\end{abstract}

\tableofcontents

\section{Introduction}  
Mean-field control (MFC) problems arise in the optimal control of large populations of weakly interacting agents, whose interactions occur  through the empirical distribution of the population. In the mean-field limit, the collective behaviour is often described by a McKean--Vlasov (MKV) stochastic differential equation (SDE) \cite{mckean1967propagation}, whose coefficients depend on the state and on the law of the population. This framework has applications in mathematical finance, economics, and engineering, including systemic risk \cite{carmona2015systemic,carmona2018probabilistic}, epidemic control \cite{brauer2008mathematical,lee2021controlling}, swarm robotics \cite{elamvazhuthi2019meanfield}, 
and traffic flow \cite{bressan2012nash}; see also \cite{fouque2020deep} for learning-based mean-field methods with delay.


Many practically relevant problems are inherently \emph{path-dependent}: the dynamics and running costs may depend on the entire history of the state trajectory, or on path functionals such as running averages, delayed feedback, or pathwise targets; see, e.g., \cite{ federico2009stochastic,fabbri2017stochastic, guo2026signature, gu2024transportation}. Path-dependence is often
a structural feature of how decisions are made when past information matters beyond the current state. In finance, for example, the payoffs of Asian, lookback, barrier, and autocallable options depend on the price path \cite{grant1997path,vecer2003pricingasian,conze1991path}, as do rough-volatility models \cite{gatheral2018rough} and path-dependent hedging schemes \cite{buehler2019deephedging}. Similarly, in engineering, systems with delays, hysteresis, or integral feedback depend intrinsically on past states \cite{hale1993introduction,fabbri2017stochastic}.

In this paper, we consider a MKV setting where mean-field interactions compound path dependence: each agent's dynamics and costs may depend on the full path of its own state and on the law of the path. This leads to \emph{path-dependent MKV control}, in which both the state dynamics and the cost functional depend on the full path of the controlled process and its law. While the continuous-time theory of such problems has recently been developed \cite{cosso2023optimal}, the design of implementable numerical schemes with rigorous approximation guarantees remains largely open.
\paragraph{Our work}
In this paper, we develop a framework for numerical approximation and learning for path-dependent MKV stochastic control with open-loop controls. We quantify the approximation error and sample complexity of the discrete particle approximation to the continuous-time problem in terms of the time step $h$ and the number of particles $M$, obtaining the rate $\mathcal{O}(h^{1/4}+M^{-\gamma})$ with a parameter $\gamma > 0$.
Our main contributions are as follows:
\begin{itemize}
    \item We first discretise the continuous-time control problem using an Euler--Maruyama scheme with piecewise constant open-loop controls, and establish a non-asymptotic discretisation bound of order $h^{1/4}$ between the continuous and Euler value functions (Theorem~\ref{thm:V_minus_barV}). 
    \item Next, we prove a discrete dynamic programming principle (DPP) for path-dependent MFC. Using the DPP, we show that the discrete-time value is unchanged when controls are feedback in terms of the initial condition and the discrete Brownian increments, so optimisation can be carried out on a reduced filtration (Theorem~\ref{thm:same-value-on-fdbck-ctrl}). 
    \item We then derive a particle approximation bound in terms of $M$ and combine both estimates to obtain a total error of order $\mathcal{O}(h^{1/4}+M^{-\gamma})$, with explicit dimension dependence in $\gamma$ (Theorem~\ref{thm:final_thm}).
    \item Finally, we propose a neural-network-based policy gradient method with controls parametrised by the Brownian-increment paths or discrete state paths, and validate on a linear-quadratic path-dependent MKV benchmark that the optimal numerical value is close to the analytically computed one.
\end{itemize} 

In contrast to the closed-loop setting, where one can assume regularity of the feedback map \cite{soner2025learning}, we impose structural conditions on the drift and running cost rather than additional regularity on the control.
Our central structural assumption (Assumption~\ref{ass assump:Hstruct}) is motivated by Filippov’s convexity condition \cite{Filippov:1962}.
Similar conditions have been used in deterministic optimal control \cite{BokanowskiGammoudiZidani:2022}; here, we formulate a stochastic, path-dependent analogue in Assumption~\ref{ass assump:Hstruct}. This assumption is key to our nonasymptotic analysis in the time step $h$, where we quantify the difference in the induced drifts and running costs under open-loop controls adapted to the continuous and discrete filtrations.
Specifically, it allows an open-loop control path on each time subinterval to be replaced by a constant control without increasing the averaged running cost or changing the drift value (Remark~\ref{rmk:construct_of_a}). 

 Moreover, our convergence analysis does not rely on the study of the infinite dimensional PDE satisfied by the value function, which is particularly challenging in this path-dependent setting.

\paragraph{Related work on learning methods for mean-field control}
The theory of mean-field control has generated a vast literature over the last decade \cite{huang2006large, bensoussan2013mean,carmona2018probabilistic,pham2018bellman,pham2016discrete,yong2013linear}; we refer to \cite{bensoussan2013mean,carmona2018probabilistic} for comprehensive treatments.
Our work addresses the numerical solution of such MFC problems, with a focus on path-dependence.

One line of numerical work for MFC uses deep learning for forward--backward SDE or Bellman/Master formulations, often with specialised architectures \cite{han2024learning, germain2022deepsets, germain2022numerical, reisinger2024fast, pham2022mean,  dayanikli2024deep, carmona2021deep}. See also \cite{10.1214/18-AAP1429} where an iterative solver is studied to approximate MKV FBDSEs for MFC.
A second line approximates the mean-field system by finitely many interacting particles and solves the resulting control problem by machine learning \cite{dayanikli2024deep, fouque2020deep, carmona2022convergence, pham2022mean, picarelli2025extended}, which is closer to our approach.
Carmona and Laurière \cite{carmona2022convergence} obtain non-asymptotic guarantees for finite-horizon MKV control with Lipschitz Markovian feedback, while
\cite{dayanikli2024deep,soner2025learning} assume Lipschitz feedback policy in state and law and establish particle approximation bounds; \cite{reisinger2025convergence} establishes time-discretisation bounds for extended linear--convex MFC under smooth, temporally H\"older-optimal controls.

In contrast, as Markovian policies are not optimal in our path-dependent setting (Section~\ref{ssec:numeric_compare}), we optimise over open-loop controls without regularity restrictions on the control set.
We quantify both the continuous-to-discrete error in terms of the time discretization $h$ and the particle approximation error in terms of the number of particles $M$. 
For model-free reinforcement learning in MFC, we refer to \cite{pham2022mean, pham2023actor, meunier2026model, frikha2025actor, gu2021mean}; we do not pursue this direction here.

\paragraph{Related work on path-dependent MFC}
The seminal paper
 \cite{cosso2023optimal} initiated the theoretical study of path-dependent MFC: well-posedness, a dynamic programming principle in infinite dimension, and law invariance of the value function.
Our focus is complementary, namely discrete approximation, particle systems, and implementable learning schemes.

\paragraph{Organization}
Section~\ref{sec:setup} introduces the path-dependent MKV control problem and the continuous-time formulation. Section~\ref{sec:Euler} develops the Euler discretisation and establishes the main approximation results. Section~\ref{sec:b_feedback} proves the equivalence between open-loop, state-feedback, and Brownian-increment feedback controls. Section~\ref{sec:fully_implement_scheme}
analyses the particle system approximation and introduces the policy gradient algorithm.  Section~\ref{sec:numerics} presents numerical experiments. Detailed proofs are collected in Section~\ref{sec:proofs}.

\paragraph{Notation} 
The set of positive integers is denoted $\mathbb{N}^+$. For $N \in \mathbb{N}^+$, we denote $[N]=\set{1,\dots,N}.$
For $d$ and $m$ in $\mathbb{N}^+$, let $|\cdot|$ denote the Euclidean norm on $\statespace$ and $\|\cdot \|_F$ the Frobenius norm on $\mathbb{R}^{d \times m}$.
Let $\delta_x$ be the Dirac measure at $x \in \statespace$.
Given $T > 0$, $\pathspace$ is the set of continuous $\statespace$-valued functions on $[0, T]$.
For $x \in \pathspace$ and $t \in[0, T]$, write $x_t$ for the value at $t$ and $x_{\cdot \wedge t}:= (x_{s \wedge t})_{s \in[0, T]}$.
We endow $\pathspace$ with $\|x\|_\infty =  \sup _{s \in[0, T]}|x_s|$ and use the path $L^2$-norm $\|x \|_{2}  = \big(\frac{1}{T} \int_0^T |x_s|^2 \,\d s\big)^{1/2}$.
Let $\delta_0$ denote the Dirac measure on $\pathspace$ supported on the constant path~$0$.

\section{Path-dependent McKean--Vlasov Control}\label{sec:setup}

\subsection{Problem setup}

Let $T>0$ be a finite time horizon.
We consider a complete probability space $(\Omega, \mathcal{F}, \mathbb{P})$ that supports a $\bmdim$-dimensional Brownian motion $W = (W_t)_{t \geq 0}$, for some positive integer $\bmdim$. 
We let $\mathbb{F}^W=\left(\mathcal{F}_t^W\right)_{t \geq 0}$ be the $\sP$-completion of the filtration generated by $W.$ 
The filtration $\mathbb{F}^W$ satisfies the usual conditions of $\mathbb{P}$-completeness and right-continuity. 
Given a random variable $\xi$, $\cL(\xi)$ denotes the law of $\xi$.
 Given any $q\geq 2,$
we denote by $\cP_{\infty,q}$   the set of all probability measures $\mu$ on $\pathspace$ with finite $q$th-order moment, i.e., 
\(
\cP_{\infty,q} \coloneqq \{ \mu \in \cP(\pathspace): \int_\pathspace \|x\|_\infty^q \mu (\d x) < + \infty \}.
\)

\begin{assumption}[Standing Assumption]\label{assum:standing} Suppose $q \geq 2.$ Let $\mathcal{G} \subset \mathcal{F}$ satisfy the following conditions:

 (i) $\mathcal{G}$ and $\mathcal{F}_{\infty}^W$ are independent;

 (ii) $\mathcal{G}$ is ``rich enough'' in the sense that 
for every $\mu \in \mathcal{P}_{\infty,q}(\pathspace)$, there exists a continuous and $\mathcal{B}([0, T]) \otimes \mathcal{G}$-measurable process $\xi:[0, T] \times \Omega \rightarrow \statespace$, satisfying $\mathbb{E} \left[ \|\xi\|_\infty^q \right]<\infty$, such that $\xi$ has law equal to $\mu$.
\end{assumption}
According to \cite[Lemma 2.1]{cosso2023optimal} and \cite[Remark 2.1]{cosso2019zero}, Assumption \ref{assum:standing}$(ii)$ implies that we can assume without loss of generality that $\mathcal{G}$ is generated by a uniform  random variable $\Gamma^\mathcal{G}$, meaning any $\mathcal{G}$-measurable random variable can be represented as $\xi = \eta(\Gamma^\mathcal{G})$ a.s.\ for a Borel function $\eta$, which makes $\mathcal{G}$ rich enough to realize any probability measure $\mu \in \mathcal{P}_{\infty,q}(\pathspace)$ via $\mathcal{L}(\eta(\Gamma^\mathcal{G})) = \mu$.

We define the filtration $\mathbb{F} = (\mathcal{F}_t)_{t \geq 0}$ by
\(
\mathcal{F}_t := \mathcal{G} \vee \mathcal{F}_t^W,
\)                                                                          which satisfies the usual conditions of  completeness and right-continuity. We then denote by $\mathbf{S}_q(\mathbb{F})$ (resp. $\mathbf{S}_q(\mathcal{G})$) the set of $\statespace$-valued continuous $\mathbb{F}$-progressively measurable (resp. $\mathcal{B}([0, T]) \otimes \mathcal{G}$ measurable) processes $\xi$ such that
$
\|\xi\|_{\mathbf{S}_q}:=\mathbb{E}\left[\|\xi\|_\infty^q\right]^{\frac{1}{q}}<\infty .
$

\paragraph{Space of probability measures and Wasserstein distance.} 
Let  $(\pathspace,\mathbf{d})$ be a metric space, where $\mathbf{d}$ denotes a given metric on $\pathspace$. For a real number $q\in[2,\infty)$, 
we endow $\cP_q(\pathspace)$ with the topology induced by the Wasserstein metric of order $p$ with $1 \leq p < q$:
\begin{align*}
\mathcal{W}_{p, \mathbf{d}}(\mu, \nu)=\inf \bigg\{\bigg(\int_{\pathspace^2}\mathbf{d}(x,y)^p \eta(\d x, \d y)\bigg)^{1/p}: \eta \in \mathcal{H}(\mu, \nu)\bigg\},
\end{align*}
where $\mathcal{H}(\mu, \nu)$ is the set of all probability measures on $\pathspace \times \pathspace$ with marginals $\mu$ and~$\nu$. Note that if $(\pathspace, \mathbf{d})$ is a Polish space, then the space $(\cP_q(\pathspace), \cW_{p, \mathbf{d}})$ also turns out to be a Polish space \cite[Theorem 6.18]{villani2008optimal}. 
We also include a useful inequality that bounds the Wasserstein distance between the law of two random variables in $\bS_q(\sF)$ with $q \geq 2$:
for any $\cX, \cY: \Omega \times [0,T] \ra \R^d,$
\begin{equation}\label{eq:wasserstein}
    \begin{aligned}
    \cW_{2, \mathbf{d}} \left(\cL(\cX), \cL(\cY) \right)^q & = \inf_{\eta \in \cH(\cL(\cX), \cL(\cY))} \bigg\{
\int_{\pathspace^2} \mathbf{d}\left(x,y \right)^{2} \eta(\d x, \d y)
    \bigg\}^{q/2}\\
    & \leq \sE \left[ \mathbf{d}(\cX, \cY)^{{2}}\right] ^{q/2} 
    \leq \sE \left[\mathbf{d}(\cX,\cY)^{{q}}\right].
\end{aligned}
\end{equation}

\begin{remark}
$(\pathspace,\|\cdot\|_{\infty})$ is Polish; $(\pathspace,\|\cdot\|_{2})$ is not complete, but its completion $\mathscr{L}^2([0,T];\statespace)$ is.
We use $\|\cdot\|_\infty$ for well-posedness of the continuous SDE \cite[Proposition~2.8]{cosso2023optimal} and $\|\cdot\|_2$ for Euler and Gronwall estimates, where exchanging expectation and supremum is not needed.
\end{remark}


\subsection{Continuous-time stochastic control problem}
\label{sec:continuous_sde}
In this section, we formulate the continuous-time stochastic control problem in which the state evolves according to a path-dependent McKean–Vlasov SDE. We first introduce the assumptions that ensure well-posedness, including existence, uniqueness, and moment estimates for the solution. We then define the associated objective functional and the corresponding lifted value function. We follow here \cite{cosso2023optimal} and adapt it to our finite dimensional framework in $\sR^d$, dictated by our goal to provide a numerical method to approximate path-dependent McKean--Vlasov control problems. The study of the Hilbert space case considered in \cite{cosso2023optimal} is left for future research.
\color{black}

\paragraph{Controlled path-dependent McKean--Vlasov SDE}
We consider a finite-horizon stochastic control problem over the time interval $[0, T]$, with fixed terminal time $T > 0$. Let $(A, d_A)$ be a non-empty Polish space. Define $\mathcal{A}$ as the collection of $\mathbb{F}$-progressively measurable processes \mbox{$\alpha : [0, T] \times \Omega \to A$.}

Let $q \geq 2$.
For any given initial time $t \in [0, T]$, initial path $\xi \in \mathbf{S}_q(\sF)$ 
and control process $\a \in \cA$, the system dynamics 
$\cX^{t,\xi, \a}= \left(\cX_s^{t,\xi, \a}\right)_{s \in[0, T]}$ follows a controlled path-dependent McKean--Vlasov stochastic differential equation, where the coefficients depend on the path trajectory and its distribution, as well as the control:
\begin{equation}\label{eq:state_dynamics_dX}
         \begin{aligned}
\cX_s&=\xi_{s \wedge t} + \int_t^{s \vee t}  b_r\left(\cX, \mathbb{P}_{\cX _{\cdot \wedge r}}, \alpha_r \right) \d r 
+\int_t^{s \vee t} \sigma_r\left(\cX, \mathbb{P}_{\cX _{\cdot \wedge r}} \right) \d W_r \quad \forall s \in[0, T], ~\mathbb{P}\text{-a.s}.
\end{aligned}
\end{equation}

Throughout the paper, whenever there is no ambiguity, we drop the initial-time dependence at $t=0$ and write $\mathcal{X}^{0,\xi,\alpha}$ and $\mathcal{X}^{\xi,\alpha}$ interchangeably.

We next present conditions that guarantee the existence and uniqueness of the solutions 
to~\eqref{eq:state_dynamics_dX}.

\begin{assumption}[Lipschitz regularity]\label{ass assum:L2_lip_b_sigma} The functions
$
 b:[0,T]\times \pathspace\times \mathcal P_{\infty,2}\times A 
\longrightarrow \statespace,$ and $
\sigma:[0,T]\times \pathspace\times \mathcal P_{\infty,2}
\longrightarrow\mathbb R^{d\times \bmdim},$
are measurable and satisfy that there exists a constant $C\geq 0$ such that
for all $a\in A$ and for all
$(t, x, \mu),\,\left(t',x^{\prime}, \mu^{\prime}\right)  \in 
 [0,T]\times \pathspace \times \mathcal{P}_{\infty,2}$:

\begin{enumerate}[(i)]
    \item $\left|b(t,x, \mu, a)-b\left(t, x^{\prime}, \mu^{\prime}, a\right)\right|
        \leq C\left(| t-t'|^{\frac{1}{4}} +
          \left\|x_{\cdot\wedge t}-x^{\prime}_{\cdot\wedge t}\right\| _2 + \cWtwo\left(\mu, \mu^{\prime}\right)\right)$;
    \item
        $\left\|\sigma(t,x, \mu)-\sigma\left(t,x^{\prime}, \mu^{\prime}\right)\right\|_{F}  \leq C\left( |t-t'|^{\frac{1}{4}} +  \left\|x_{\cdot\wedge t}-x^{\prime}_{\cdot\wedge t}\right\| _2+\cWtwo\left(\mu, \mu^{\prime}\right)\right)$; 
    \item
    $\left|b\left(t, 0, \delta_0, a\right)\right|+\left\|\sigma\left(t, 0, \delta_0 \right)\right\|_{F} 
     \leq C$.
\end{enumerate}
\end{assumption} 

\color{black}

\begin{remark}
The Lipschitz continuity of $b$ and $\sigma$ with respect to the variable $x \in \pathspace$ 
implies  the non-anticipative properties:
$
b(t,x, \mu, u)=b\left(t,x_{\cdot \wedge t}, \mu, u\right), $ and $ \sigma(t, x, \mu)=\sigma\left(t, x_{\cdot \wedge t}, \mu\right)
$
for every $(t, x, \mu, u) \in[0, T] \times \pathspace \times \cP_{\infty,2} \times A$.
\end{remark}


In the following proposition, we collect several useful properties of the controlled path-dependent McKean–Vlasov SDE.

\color{black}

\begin{prop}\label{prop:X_square_bound}
Let $q\geq 2$, the initial time $t$ be in $[0, T]$ and the initial path $\xi$ be in $\mathbf{S}_q(\sF)$.
Suppose Assumption~\ref{ass assum:L2_lip_b_sigma} holds. Then the following statements hold:

\begin{enumerate}[(i)]
    \item  The SDE \eqref{eq:state_dynamics_dX} admits 
a unique solution $\cX^{t, \xi, \a} \in \mathbf{S}_q(\sF)$. 
    \item The map $[0,T] \times \mathbf{S}_q(\sF), (t,\xi) \mapsto \cX^{t,\xi,\a}$ is jointly continuous in $(t,\xi)$,
and there exists a constant $C_q$ independent of $t,\xi,\a$ such that
\begin{equation}\label{eq:bound_X_square}
    \left\|\cX^{t, \xi, \alpha}\right\|_{\mathbf{S}_q} \leq C_q\left(1+\left\|\xi_{\cdot \wedge t}\right\|_{\mathbf{S}_q}\right).
\end{equation}
    \item   For any $h \geq 0$ and $u\in[t,T]$, there exists a constant $C_q$ such that, 
for any  $h'\in\left[0, h\right]$ the following bound holds:
\begin{equation}\label{eq:Xt_minus_Xn}
    \mathbb{E}\left[\sup_{h' \in [0, h ]}
    \left|\cX^{t,\xi,\alpha}_{(u + h') \wedge T}-\cX^{t,\xi,\alpha}_{u}\right|^q\right]
       \leq C_q\left(1+ \|\xi_{\cdot \wedge t}\|_{\bS_q}^q\right) h^{\frac{q}{2}} .
\end{equation}
\end{enumerate}

\end{prop}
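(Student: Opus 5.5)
The plan is a Picard fixed-point argument on $\mathbf{S}_q(\sF)$, followed by Burkholder--Davis--Gundy (BDG) and Gronwall estimates. The fixed-point step is the same as in \cite[Proposition~2.8]{cosso2023optimal}, adapted to $\statespace$.

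\emph{Existence and uniqueness (i).} Fix $t$, $\xi$ and $\alpha$. For $\cY\in\mathbf{S}_q(\sF)$ define $\Phi(\cY)$ as the right-hand side of \eqref{eq:state_dynamics_dX}, with $\cX$ replaced by $\cY$ inside $b$ and $\sigma$.

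First, $\Phi$ maps $\mathbf{S}_q(\sF)$ into itself. Assumption~\ref{ass assum:L2_lip_b_sigma}(iii) and the Lipschitz bounds give linear growth:
\[
|b_r(y,\mu,a)|+\|\sigma_r(y,\mu)\|_F\le C\bigl(1+T^{1/4}+\|y_{\cdot\wedge r}\|_\infty+\cW_{2,L^2}(\mu,\delta_0)\bigr),
\]
using $\|\cdot\|_2\le\|\cdot\|_\infty$. Also $\cW_{2,L^2}(\mathbb{P}_{\cY_{\cdot\wedge r}},\delta_0)\le \E[\|\cY_{\cdot\wedge r}\|_\infty^2]^{1/2}$. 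Hence Hölder and BDG give $\Phi(\cY)\in\mathbf{S}_q$, and continuity of $\Phi(\cY)$ follows from the stochastic integral.

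For contraction, take $\cY,\cY'$ and set $\Delta_s=\E[\sup_{u\le s}|\cY_u-\cY'_u|^q]$. We have $\|y_{\cdot\wedge r}-y'_{\cdot\wedge r}\|_2\le\sup_{u\le r}|y_u-y'_u|$, and by \eqref{eq:wasserstein} the Wasserstein term is bounded by $\E[\sup_{u\le r}|\cY_u-\cY'_u|^q]^{1/q}$. Jensen/Hölder on the $\d r$ integral and BDG on the $\d W_r$ integral then give
\[
\E\Bigl[\sup_{u\le s}|\Phi(\cY)_u-\Phi(\cY')_u|^q\Bigr]\le C\int_t^{s}\Delta_r\,\d r.
\]
Iterating yields $C^nT^n/n!$, so some power of $\Phi$ is a contraction (equivalently, use a weighted norm $e^{-\lambda s}$). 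This gives a unique fixed point $\cX^{t,\xi,\alpha}$.

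\emph{Moment bound and continuity (ii).} For the a priori bound, apply the same BDG/Hölder estimate to $\cX$ itself, using the linear growth above. With $f(s)=\E[\sup_{u\le s}|\cX_u|^q]$ this gives
\[
f(s)\le C\bigl(\|\xi_{\cdot\wedge t}\|^q_{\mathbf{S}_q}+1\bigr)+C\int_t^s f(r)\,\d r,
\]
since $\cX_u=\xi_u$ for $u\le t$. Gronwall then gives \eqref{eq:bound_X_square}, with constant depending only on $C,T,q$, hence independent of $t,\xi,\alpha$.

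For joint continuity, take $(t,\xi)$ and $(t',\xi')$ with $t\le t'$. The difference $\cX^{t,\xi,\alpha}-\cX^{t',\xi',\alpha}$ splits into three pieces: the initial discrepancy $\xi_{\cdot\wedge t}-\xi'_{\cdot\wedge t'}$; the contribution of $\cX^{t,\xi}$ on $[t,t']$, which is small by (iii); and a Lipschitz term on $[t',T]$ handled by Gronwall. This gives
\[
\|\cX^{t,\xi}-\cX^{t',\xi'}\|_{\mathbf{S}_q}^q\le C\bigl(\E\|\xi_{\cdot\wedge t}-\xi'_{\cdot\wedge t'}\|_\infty^q+(1+\|\xi\|^q_{\mathbf{S}_q})|t-t'|^{q/2}\bigr).
\]
Continuity of $t\mapsto\xi_{\cdot\wedge t}$ in $\mathbf{S}_q$ follows from path continuity and dominated convergence, since $\|\xi_{\cdot\wedge t}\|_\infty\le\|\xi\|_\infty\in L^q$.

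\emph{Time regularity (iii).} If $u+h'\le t$, the increment is an increment of $\xi_{\cdot\wedge t}$, which is constant after $t$. Such increments are not controlled by $h$ in general, so here I interpret $u\ge t$, as in the statement, and reduce to $u\in[t,T]$. Then
\[
\cX_{u+h'}-\cX_u=\int_u^{u+h'}b\,\d r+\int_u^{u+h'}\sigma\,\d W_r .
\]
Hölder bounds the drift term by $h^{q-1}\int_u^{u+h}|b|^q\,\d r$, and BDG bounds the martingale term by $C\,\E\bigl(\int_u^{u+h}\|\sigma\|_F^2\,\d r\bigr)^{q/2}\le Ch^{q/2-1}\int_u^{u+h}\E\|\sigma\|_F^q\,\d r$. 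Linear growth together with \eqref{eq:bound_X_square} gives $\E|b_r|^q+\E\|\sigma_r\|^q_F\le C(1+\|\xi_{\cdot\wedge t}\|^q_{\mathbf{S}_q})$, so the total is $O(h^{q/2})$ for $h\le T$. For $h>T$ the bound is trivial from (ii), after enlarging the constant.

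The main obstacle is the contraction step in (i). One must control the measure argument $\mathbb{P}_{\cY_{\cdot\wedge r}}$ in $\cW_{2,L^2}$ uniformly along the Picard iteration, and check that the $|t-t'|^{1/4}$ term in the assumption plays no role there, since both processes are evaluated at the same time $r$. The remaining steps are standard BDG/Gronwall computations.
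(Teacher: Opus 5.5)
Your proposal is correct and follows essentially the same route as the paper. For (i)--(ii) the paper defers to the Picard/BDG/Gronwall argument of \cite[Proposition~2.8]{cosso2023optimal}, noting that the extension to $q\ge 2$ is classical, which is exactly what you write out; for (iii) it uses your H\"older bound on the drift, BDG on the stochastic integral, linear growth of $b,\sigma$ and the moment bound \eqref{eq:bound_X_square}, only conditioning on $\cF_u$ before taking expectations.
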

The existence, uniqueness, and joint-continuity of the solution in $(i)$ and the bound in $(ii)$ are established 
in \cite[Proposition~2.8]{cosso2023optimal}
in the case $q=2$. 
The extension to $q\ge 2$ is classical: the proof uses the Burkholder-Davis-Gundy (BDG)
and Hölder inequalities in $L^2$, and the estimates hold also for $L^q$ when $q\ge2$, so the proofs of (i) and  (ii) are omitted.  The proof of (iii) is postponed to the appendix (cf. Appendix \ref{ssec:prop:X_square_bound}.)

\paragraph{Objective functional $J$ and lifted value function $V$}
We now formulate the path-dependent McKean–Vlasov control problem. For a given initial time $t\in [0,T]$, 
an admissible control $\alpha \in \mathcal{A}$ and an initial condition $\xi \in \mathbf{S}_q(\mathbb{F})$
(for some $q \geq 2$), the objective functional $J$ is defined by
\begin{equation}\label{eq:objective_J}
    J(t, \xi, \alpha) = \mathbb{E} \left[ \int_{t}^T  f \left(s, 
    \cX_{\cdot \wedge s}^{t, \xi, \alpha}, \cL(\cX_{\cdot \wedge s}^{t, \xi, \alpha}), 
    \a_s\right)\d t+  \Terminal\left(\cX^{t, \xi, \alpha},\mathcal{L}(\cX^{t, \xi, \alpha})\right) \right],
\end{equation}
where the payoff functions $f$ and $\Terminal$ satisfy the following conditions.

\begin{assumption}\label{assum:terminal_cost_f_g_Phi} 
Let $f : [0,T] \times \pathspace\times \mathcal{P}_{\infty,2} \times A \to \mathbb{R}$ and $\Terminal: \pathspace \times \mathcal{P}_{\infty,2} \to \sR$ denote the running and terminal cost functions respectively.
We impose the following regularity conditions: 

(i)
    The function $f$ and $\Terminal$ are locally Lipschitz continuous in $(x, \mu)$: there exists a constant $C\geq 0$ such that, for every $t,t' \in [0,T]$ and $a \in A$, 
    for all $x,x' \in \pathspace$ and $\mu,\mu' \in \mathcal{P}_{\infty,2}$:
\begin{align*}
   &\big| f(t, x, \mu, a) - f(t', x', \mu', a) \big|
   \leq C \big( 1+ \|x_{\cdot\wedge t}\|_{\infty} + \|x'_{\cdot\wedge t}\|_{\infty}
         +\cWinf(\mu, \delta_0)  \\
   &\qquad \qquad \qquad \qquad  + \cWinf(\mu', \delta_0) \big) \cdot \big( |t-t'|^{\frac14} +  \|x_{\cdot\wedge t}-x'_{\cdot\wedge t}\|_{2} + \cWtwo(\mu,\mu') \big), \\
   &\left| \Terminal(x,\mu) - \Terminal(x',\mu') \right|
   \leq C \big( 1+ \|x\|_{\infty} + \|x'\|_{\infty}
         +\cWinf(\mu, \delta_0) + \cWinf(\mu', \delta_0) \big) \\
   &\quad\cdot \big(  \|x-x'\|_{2} + \cWtwo(\mu,\mu') \big).
\end{align*}
    
(ii) There exists a constant $C \geq 0$ such that 
$ |f(0, 0, \delta_0, a)| + |\Terminal( 0,  \delta_0) | \leq C, 
$ for any $a \in A$. 

\end{assumption}

\begin{remark}\label{rmk:continuity_of_fg}
Assumption~\ref{assum:terminal_cost_f_g_Phi}(i) implies that
$f$ (resp.\ $g$) is locally uniformly continuous in $(x,\mu)$,
uniformly with respect to $(t,a)$ (resp.\ $a$), under both the $L^\infty$ and $L^2$ norms. 
\end{remark}

Under Assumptions \ref{ass assum:L2_lip_b_sigma} and \ref{assum:terminal_cost_f_g_Phi}, from Proposition \ref{prop:X_square_bound} we can get that the objective function $J$ in \eqref{eq:objective_J} is well defined for any $(t,\xi,\alpha) \in [0,T] \times \mathbf{S}_2(\mathbb{F})\times \cA$. We then consider the so-called \textit{lifted value function} 
(cf. \cite{cosso2023optimal, picarelli2025extended}), which is defined as
\begin{equation}
    \label{eq:value_continuous}
    V(t, \xi)=\inf_{\alpha \in \mathcal{A}} J(t, \xi, \alpha), \quad \forall(t, \xi) \in[0, T] \times \mathbf{S}_2(\mathbb{F}).
\end{equation}

\begin{remark}\label{rmk:nonanticipate_fg}
The Lipschitz continuity condition in Assumption \ref{assum:terminal_cost_f_g_Phi}(i) implies that $f$  satisfies the non-anticipativity property:
$
f(t, x, \mu, u) = f(t, x_{\cdot \wedge t}, \mu, u),$
for any $(t,x, \mu, u )\in [0,T] \times \pathspace \times \cP_{\infty,2} \times A.$
Moreover, by Proposition \ref{prop:X_square_bound}, $\cX^{t,\xi,\a}$ only depends on the values of $\xi$ up to time $t, $ i.e.,
$
\cX^{t,\xi, \a} = \cX^{t,\xi_{\cdot\wedge t},\a},
$
then $J$ and $V$ also satisfy the non-anticipativity property:
$$
J(t,\xi,\a) = J(t,\xi_{\cdot \wedge t},\a),\quad V(t,\xi) = V(t,\xi_{\cdot \wedge t}),
$$
for any $(t,\xi)\in [0,T] \times \mathbf{S}_2(\sF) $, and  $ \a \in \cA.$
\end{remark}

 We next present a law invariance property as a direct result of \cite[Theorem~3.6]{cosso2023optimal} and Remark \ref{rmk:continuity_of_fg}, showing that $V$ is invariant with respect to the choice of the random variable $\xi \in \mathbf{S}_2({\sF})$, provided that it has the same distribution. 
\begin{lemma}[Law invariance property]\label{lemma:law_invariance}
    Under Assumptions~\ref{ass assum:L2_lip_b_sigma}  and \ref{assum:terminal_cost_f_g_Phi}, for every $t \in [0,T]$ and $\xi, \xi^{\prime} \in \mathbf{S}_2(\sF)$ with $\mathcal{L}(\xi)=\mathcal{L}\!\left(\xi^{\prime}\right)$, it holds that
    \(
         V(t, \xi) =  V\!\left(t, \xi^{\prime}\right).
    \)
\end{lemma}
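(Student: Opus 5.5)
The paper presents this as a direct consequence of \cite[Theorem~3.6]{cosso2023optimal}, so the plan is mainly to check that our finite-dimensional setting meets its hypotheses. I would still outline the self-contained argument, since that shows which ingredients are really used. By Remark~\ref{rmk:nonanticipate_fg}, $V(t,\xi)=V(t,\xi_{\cdot\wedge t})$, so we may take $\xi,\xi'$ to be stopped at $t$. By symmetry it suffices to prove $V(t,\xi')\le V(t,\xi)$. Concretely, given $\vep>0$ and $\a\in\cA$ with $J(t,\xi,\a)\le V(t,\xi)+\vep$, I would build $\a'\in\cA$ with $J(t,\xi',\a')\le J(t,\xi,\a)+\vep$.

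\textbf{Step 1: reduce the control and the initial path to discrete data.} By Proposition~\ref{prop:X_square_bound}(ii) and the local Lipschitz bounds on $f,\Terminal$ in Assumption~\ref{assum:terminal_cost_f_g_Phi} (see Remark~\ref{rmk:continuity_of_fg}), $J(t,\cdot,\a)$ is continuous in $\xi$ under $\mathbf S_2$. Continuity in $\a$ holds for convergence in measure on $[t,T]\times\Omega$ (under $d_A$), by the standard stability estimate for \eqref{eq:state_dynamics_dX}.

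\begin{itemize}
\item[(a)] Approximate $\a$ by a control that is piecewise constant on a time grid. Its value on each interval $[t_k,t_{k+1})$ should be a function $\phi_k(\Gamma,\xi,\text{finitely many } W\text{-increments on }[t,t_k],\, U)$, where $\Gamma$ and $U$ collect the extra information in $\mathcal F_t$.
\item[(b)] Replace $\xi$ by a discretised path $\hat\xi$, for instance piecewise-linear interpolation of finitely many values, and do the same for $\xi'$ using the same deterministic map. Then $\mathcal L(\hat\xi)=\mathcal L(\hat\xi')$, and both are close to the originals in $\mathbf S_2$.
\end{itemize}

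\textbf{Step 2: transfer the control via a coupling.} Here richness of $\mathcal G$ (Assumption~\ref{assum:standing}) is used, with $\mathcal G=\sigma(\Gamma^{\mathcal G})$ for a uniform $\Gamma^{\mathcal G}$. Since $\mathcal L(\hat\xi')=\mathcal L(\hat\xi)$, a disintegration / transfer argument produces an $\mathcal F_t$-measurable random variable $U'$ such that
\[
(\hat\xi',U',(W_s-W_t)_{s\ge t}) \stackrel{d}{=} (\hat\xi,U,(W_s-W_t)_{s\ge t}).
\]
To get this, I would split $\Gamma^{\mathcal G}$ into two independent uniforms and use one of them as extra randomisation. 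The independence of $W_\cdot-W_t$ from $\mathcal F_t$ is what allows the Brownian part to be matched. Then set $\a'_s:=\phi_k(\hat\xi',U',\text{increments})$ on each grid interval. This is $\mathbb F$-progressive, and the joint law of (initial path, control, Brownian increments after $t$) coincides with that of the original problem.

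\textbf{Step 3: conclude by pathwise uniqueness.} Pathwise uniqueness for \eqref{eq:state_dynamics_dX} (Proposition~\ref{prop:X_square_bound}(i)) gives the Yamada--Watanabe-type conclusion that the solution's law is determined by this joint law. To see this, freeze the flow of marginals $\mathbb P_{\cX_{\cdot\wedge r}}$; the equation becomes a standard path-dependent SDE, whose strong solution is a measurable functional of its inputs. A fixed-point argument then shows the flow of marginals is the same for both systems. It follows that $(\cX^{t,\hat\xi',\a'},\a')\stackrel{d}{=}(\cX^{t,\hat\xi,\a},\a)$ with equal marginal flows, hence $J(t,\hat\xi',\a')=J(t,\hat\xi,\a)$. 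Undoing the approximations of Step 1 costs at most $\vep$ overall, which gives the claim.

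The main obstacle is Step 2: constructing $U'$ measurably so that $\a'$ is genuinely progressive in the filtration $\mathbb F$. That is the reason for the reduction in Step 1 to finitely many increments, and it is where the richness of $\mathcal G$ is essential. For the detailed measurability argument I would refer to \cite[Theorem~3.6]{cosso2023optimal}, whose structure carries over verbatim to $\R^d$.
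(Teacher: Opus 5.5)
Your fallback of invoking \cite[Theorem~3.6]{cosso2023optimal}, with Remark~\ref{rmk:continuity_of_fg} supplying the local uniform continuity of $f,\Terminal$, is exactly what the paper does. The self-contained outline, however, has a genuine gap in Step~2. The problem is not measurability: the $U'$ you need may not exist at all. Your transfer requires an $\cF_t$-measurable source of randomness independent of $\hat\xi'$, and $\cF_t$ need not contain any.

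Take $t=0$ (the case the paper actually uses), $d=1$, $\xi_s\equiv 2\Gamma^{\mathcal G}\bmod 1$ and $\xi'_s\equiv\Gamma^{\mathcal G}$. Both are constant paths with uniform value, $\hat\xi=\xi$ and $\hat\xi'=\xi'$, and $\cF_0=\mathcal G$ up to null sets. A control for $\xi$ may use the fair coin $\mathbbm{1}_{\{\Gamma^{\mathcal G}\ge 1/2\}}$, which is independent of $\xi_0$. In contrast, every $\cF_0$-measurable variable is a function of $\xi'_0$. So whenever $U$ carries this coin, no $U'$ with $(\xi'_0,U')\stackrel{d}{=}(\xi_0,U)$ exists. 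Splitting $\Gamma^{\mathcal G}$ into two uniforms does not help, because $\xi'$ may depend on both halves, and richness of $\mathcal G$ is beside the point.

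There is also a secondary issue. Step~1(a) uses continuity of $J$ in $\a$ under convergence in measure, which needs continuity of $b,f$ in $a$. Assumptions~\ref{ass assum:L2_lip_b_sigma} and~\ref{assum:terminal_cost_f_g_Phi} only give measurability in $a$.

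The missing idea is to take the randomisation from the Brownian motion after $t$ and remove its cost in a limit. The case $t=T$ is trivial, so fix $\delta>0$ with $t+\delta\le T$, and start the problem at $t+\delta$ from the frozen path $\xi_{\cdot\wedge t}$.
\begin{itemize}
\item Let $U_\delta$ be the standard normal cdf applied to $(W^1_{t+\delta}-W^1_t)/\sqrt\delta$. It is uniform, $\cF_{t+\delta}$-measurable, and independent of $\cF_t\vee\sigma(W_s-W_{t+\delta}:s\ge t+\delta)$.
\item Represent any $\a\in\cA$ on $[t+\delta,T]$ as $\a_s=a(s,Y,(W_{r\wedge s}-W_{t+\delta})_r)$, with $Y$ generating $\cF_{t+\delta}$.
\item The transfer (randomisation) lemma gives a kernel $\Psi$ with $(\xi_{\cdot\wedge t},\Psi(\xi_{\cdot\wedge t},\tilde U))\stackrel{d}{=}(\xi_{\cdot\wedge t},Y)$ for $\tilde U$ uniform and independent of $\xi_{\cdot\wedge t}$. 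Set $Y':=\Psi(\xi'_{\cdot\wedge t},U_\delta)$ and $\a'_s:=a(s,Y',(W_{r\wedge s}-W_{t+\delta})_r)$.
\end{itemize}
Your Step~3 then gives $J(t+\delta,\xi'_{\cdot\wedge t},\a')=J(t+\delta,\xi_{\cdot\wedge t},\a)$, hence $V(t+\delta,\xi_{\cdot\wedge t})=V(t+\delta,\xi'_{\cdot\wedge t})$ exactly. No approximation of $\a$ or $\xi$ is needed, so Step~1 can be dropped.

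Finally, Proposition~\ref{prop:X_square_bound}(iii) and Gronwall give
$\sup_{\a\in\cA}\sE[\|\cX^{t,\xi,\a}-\cX^{t+\delta,\xi_{\cdot\wedge t},\a}\|_\infty^2]\le C\delta$.
Together with the growth and local Lipschitz bounds on $f,\Terminal$, this yields $\sup_{\a}|J(t,\xi,\a)-J(t+\delta,\xi_{\cdot\wedge t},\a)|\to0$. Letting $\delta\downarrow0$ proves $V(t,\xi)=V(t,\xi')$.
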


As a consequence, we can define the intrinsic value function (with slight abuse of notations) that
${{V}}: [0,T] \times \cP_{\infty,2}  \to \mathbb{R}$ by 
    \(
        {{V}}(t, \mu) \coloneqq  V(t, \xi),  
    \)
for any $\xi \in \mathbf{S}_2(\sF)$ such that $\mathcal{L}(\xi)=\mu \in \cP_{\infty, 2}$.

\section{Discrete-time control problem}\label{sec:Euler}
We now consider a discrete-time approximation of the control problem introduced in the previous section. In this section, we introduce several Euler-type approximations and the corresponding control problems. Our aim is to establish an error bound of order $O(h^{1/4})$ for these discretised control problems with time step size $h$.

We consider a discrete-time grid
\(
\pi := \{0 = t_0 < t_1 < \cdots < t_N = T\}
\)
on the interval $[0,T]$. Without loss of generality, we assume that the grid is \emph{uniform}, i.e., 
$t_n = n h$ for $n = 0,\ldots, N$ with time step $h = T/N$.  
We now consider the discrete counterpart of continuous filtration $\mathbb{F}$, namely 
\(
\mathbb{F}^\pi := (\cF_{t_n})_{0\le n \le N}.
\)
In analogy with the continuous-time case, we introduce the class of discrete-time processes:
\[
\mathbf{S}_q(\mathbb{F}^\pi) := \big\{ \xi \text{ is } \mathbb{F}^\pi \text{ adapted and } \mathbb{E}\big[\max_{n \in [N]} |\xi_n|^q \big] < +\infty\big\}.
\]
for $q \ge 2$.
We denote by $\mathcal{A}^\pi$ the set of discrete-time controls $\bar{\alpha} = \{\bar{\alpha}_{n}\}_{n=0}^{N-1}$ taking values in $A$ and such that $\bar{\alpha}_{n}$ is $\mathcal{F}_{t_n}$-measurable for each $n=0,\dots,N-1$.

We consider the projection operator
\(
\mathfrak{p}^\pi : \pathspace \to (\R^d)^{{N+1}}
\)
and the linear interpolation operator
\(
\ell^\pi : (\R^d)^{{N+1}} \to \pathspace,
\)
both defined on the grid $\pi$.
More precisely, to each continuous path $x \in \pathspace$, we associate the discrete sequence
\(
\mathfrak{p}^\pi(x) = (x_{t_n})_{n=0}^N \in (\R^d)^N.
\)
To alleviate notation, we write ${}^\pi x := \mathfrak{p}^\pi(x)$.
Conversely, for a discrete sequence $(\bar{x}_{t_n})_{0 \le n \le N}$, the operator $\ell^\pi$ returns the path obtained by linear interpolation between the points $\bar{x}_{t_n}$, $0 \le n \le N$. We write
\(
{}^\ell \bar{x} := \ell^\pi\big((\bar{x}_{t_n})_{0 \le n \le N}\big).
\)
With a slight abuse of notation, for a continuous path $x$, we also write ${}^\ell x$ to denote its piecewise linear interpolation $\ell^\pi \circ \mathfrak{p}^\pi(x)$.
Throughout the paper, we also denote by $\n:[0,T]\to\{t_0,\dots,t_N\}$ the function that returns 
the time index $\n(s)=t_n$ whenever $s\in[t_n,t_{n+1})$, and 
 $\n(t_N)=t_N$. For a discrete process $\bar x \in(\mathbb{R}^{d})^{{N+1}}$, we define the stopped process $\bar x_{\cdot \wedge t_n}$ by
 \(
 (\bar x_{\cdot \wedge t_n})_k :=
 \begin{cases}
 \bar x_{t_k}, & 0 \le k \le n\\
 \bar x_{t_n}, & k > n
 \end{cases}
 \).

The following lemma provides useful estimates related to the projection and linear interpolation operators, whose proof is deferred to Section \ref{ssec:proof_le prop interpol}.
\begin{lemma}\label{le prop interpol} 
(i) For $x \in \pathspace$,  it holds that
$\int_0^ {kh}|{}^\ell\!x_t|^2 \d t \le h\sum_{i =0}^k|x_{t_i}|^2.$
 
(ii) For $\xi \in \mathbf{S}_2(\mathbb{F}) $, $\alpha \in \cA$,
   $ \sup_{t\in[0,T]} \esp{\left\|{}^\ell\!\mathcal{X}^{0,\xi,\alpha}_{\cdot\wedge t}
- \mathcal{X}^{0,\xi,\alpha}_{\cdot\wedge t} \right\|_2^2 } \le Ch.$
\end{lemma}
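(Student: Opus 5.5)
For (i), I will argue interval by interval. On $[t_i,t_{i+1}]$ the interpolant is ${}^\ell\!x_t=\lambda x_{t_i}+(1-\lambda)x_{t_{i+1}}$ with $\lambda=(t_{i+1}-t)/h\in[0,1]$. Convexity of $|\cdot|^2$ then gives
\[
|{}^\ell\!x_t|^2\le \lambda|x_{t_i}|^2+(1-\lambda)|x_{t_{i+1}}|^2 .
\]
Integrating in $t$ over the subinterval yields $\frac h2\big(|x_{t_i}|^2+|x_{t_{i+1}}|^2\big)$. Summing over $i=0,\dots,k-1$ gives the trapezoidal sum $h\big(\tfrac12|x_{t_0}|^2+\sum_{i=1}^{k-1}|x_{t_i}|^2+\tfrac12|x_{t_k}|^2\big)$, which is at most $h\sum_{i=0}^k|x_{t_i}|^2$.

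For (ii), fix $t$ and write $\cX=\cX^{0,\xi,\alpha}$. I interpret ${}^\ell\!\cX_{\cdot\wedge t}$ as the interpolation of the stopped path; the other reading, interpolating first and then stopping, is handled the same way, since the two agree on the grid and differ only on the interval containing $t$. By definition,
\[
\big\|{}^\ell\!\cX_{\cdot\wedge t}-\cX_{\cdot\wedge t}\big\|_2^2=\frac1T\int_0^T\big|{}^\ell(\cX_{\cdot\wedge t})_s-\cX_{s\wedge t}\big|^2\,\d s .
\]
For $s\in[t_i,t_{i+1}]$, the interpolant is a convex combination of the values $\cX_{t_i\wedge t}$ and $\cX_{t_{i+1}\wedge t}$. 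Hence the pointwise error is bounded by
\[
|\cX_{t_i\wedge t}-\cX_{s\wedge t}|+|\cX_{t_{i+1}\wedge t}-\cX_{s\wedge t}| \le 2\sup_{h'\in[0,h]}\big|\cX_{(t_i+h')\wedge T}-\cX_{t_i}\big| .
\]
This holds because all of $t_i\wedge t$, $s\wedge t$ and $t_{i+1}\wedge t$ lie in $[t_i, t_i+h]$, or else coincide when $t<t_i$. Stopping at $t$ therefore never increases the oscillation over a subinterval.

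Next I take expectations and apply Proposition~\ref{prop:X_square_bound}(iii) with $q=2$ and $u=t_i$. This gives
\[
\E\big[|{}^\ell(\cX_{\cdot\wedge t})_s-\cX_{s\wedge t}|^2\big]\le 4\,C_2\big(1+\|\xi_{\cdot\wedge 0}\|^2_{\bS_2}\big)h .
\]
This bound is uniform in $s$, $i$ and $t$. Integrating over $[0,T]$, dividing by $T$, and then taking the supremum over $t$ yields the bound $Ch$, where $C$ depends on $\|\xi\|_{\bS_2}$ and the constants of Assumption~\ref{ass assum:L2_lip_b_sigma}, but not on $t$ or $\alpha$.

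The only delicate point is checking that stopping at $t$ is compatible with the local modulus estimate, in particular on the interval containing $t$. This is exactly the case distinction above. Since the $L^2$-in-time norm lets us exchange expectation and integral, no supremum over the whole path is needed, and the increment bound of Proposition~\ref{prop:X_square_bound}(iii) is sufficient.
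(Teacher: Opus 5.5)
Your proposal is correct and follows the same route as the paper. Part (i) uses convexity on each $[t_i,t_{i+1}]$ and sums the trapezoidal bounds. Part (ii) bounds the interpolation error pointwise by increments of $\cX$ over one subinterval, invokes Proposition~\ref{prop:X_square_bound}(iii), and then integrates in time; your handling of the stopping at $t$ is more careful than the paper's one-line argument.

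There are two harmless slips. First, the sum $|\cX_{t_i\wedge t}-\cX_{s\wedge t}|+|\cX_{t_{i+1}\wedge t}-\cX_{s\wedge t}|$ is in general only bounded by $3\sup_{h'\in[0,h]}|\cX_{(t_i+h')\wedge T}-\cX_{t_i}|$; to get the factor $2$ (and your constant $4C_2$), bound the error by the maximum of the two terms instead, as the paper does. Second, the two readings of ${}^\ell\!\cX_{\cdot\wedge t}$ do not agree at grid points after a non-grid $t$. The interpolate-then-stop reading is nevertheless handled directly by the pointwise estimate $\E\big[|{}^\ell\!\cX_r-\cX_r|^2\big]\le Ch$ at $r=s\wedge t$.
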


\subsection{Euler scheme update}
We next introduce the Euler scheme update for the continuous state dynamics \eqref{eq:state_dynamics_dX}.
For $n <N$, $\bar{\alpha} \in \cA^\pi$ and a discrete-time initial condition $\xi \in \mathbf{S}_2(\mathbb{F}^\pi)$, we consider the discrete-time process $\bar{\mathcal{X}}^{t_n,\xi,\bar{\alpha}}$ defined on $\pi$ by
\begin{equation}\label{eq de euler scheme}
    \begin{aligned}
    &\bar{\mathcal{X}}^{t_n,\xi,\bar{\alpha}}_{t_k} = \xi_{k}, \quad \text{ for }  k\le n; \\
    &\bar{\mathcal{X}}^{t_n,\xi,\bar{\alpha}}_{t_{k+1}}
    =
    \bar{\mathcal{X}}^{t_n,\xi,\bar{\alpha}}_{t_{k}}
    +
     b(t_k,{}^\ell\!\bar{\mathcal{X}}^{t_n,\xi,\bar{\alpha}}_{\cdot \wedge t_k} ,\cL\!\left({}^\ell\!\bar{\mathcal{X}}^{t_n,\xi,\bar{\alpha}}_{\cdot\wedge t_k}\right),\bar{\alpha}_{t_k}) h \\
     &\qquad\qquad
    + \sigma \left(t_k,{}^\ell\!\bar{\mathcal{X}}^{t_n,\xi,\bar{\alpha}}_{\cdot\wedge t_k} ,\cL\!\left({}^\ell\!\bar{\mathcal{X}}^{t_n,\xi,\bar{\alpha}}_{\cdot\wedge t_k}\right)\right) \Delta W_k, \quad \text{ for } n\le k < N, 
\end{aligned}
\end{equation}
where $\Delta W_k = W_{t_{k+1}} - W_{t_k}$ for any $k\in [N-1]$.

\begin{remark}
The discrete process $\{ \bar{\mathcal{X}}_{t_k}^{t_n, \xi,\bar{\alpha}} \}_{k \in [N]}$ is $\mathbb{F}^\pi$-adapted and belongs to $\mathbf{S}_2(\mathbb{F}^\pi)$. 
The linear interpolation
 $\upl \bar{\mathcal{X}}^{t_n, \xi,\bar{\alpha}}$ is continuous (piecewise linear) but not $\cF_s$-measurable for every $s \geq t_n$.


    
\end{remark}

The following proposition establish the properties of $\bar \cX$, including the flow property and moment estimation bounds:
\begin{prop}\label{pr prop:moment_estimate_first_euler}
Let Assumption \ref{ass assum:L2_lip_b_sigma} hold. 
Fix $q\geq 2$, $\xi \in \mathbf{S}_q(\sF^\pi)$ and $\bar{\alpha} \in \cA^\pi$. 
The process $\bar\cX$ defined in \eqref{eq de euler scheme} 
satisfy the following properties:
\begin{enumerate}[(i)]
    \item For any $t \in \pi$, $\bar \cX^{t, \xi, \bar\a} \in \mathbf{S}_q(\sF^\pi)$.
Moreover, $\bar \cX^{t,\xi,\bar\a}= \bar \cX^{t, \xi_{\cdot \wedge t}, \bar\a}$.
\item The flow property holds, for $t,s \in \pi$ s.t. $t\ge s$:
  $ \bar \cX^{t, \xi, \bar\alpha}= \bar \cX^{s, \bar \cX^{t, \xi, \bar\alpha}, \bar\alpha}. $
\item There exists a constant $C_q$ independent of $t,\xi,\a$ such that
\begin{equation}\label{eq eq:bound_X_square_euler}
    \sE \Big[\max_{t_i \in \pi}|\bar \cX^{t, \xi, \alpha}_{t_i}|^q \Big] + \sE \Big[ { \sup_{s \in [0,T]}|{}^\ell\bar \cX^{t, \xi, \alpha}_s|^q } \Big] \le C_q\Big(1+
    \sE \Big [ {\sup_{i \in [N]: \, t_i \leq t}|\xi_i|^q}  \Big]\Big).
\end{equation}
\end{enumerate}

\end{prop}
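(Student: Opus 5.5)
We prove the three items by forward induction on the grid index $k$, since the scheme \eqref{eq de euler scheme} is an explicit recursion. The law argument only involves $\cL({}^\ell\bar\cX_{\cdot\wedge t_k})$, which is determined once $\bar\cX_{t_0},\dots,\bar\cX_{t_k}$ are known as random variables. So there is no fixed-point problem in the McKean--Vlasov term: the recursion defines the process uniquely step by step.

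For (i), suppose $\bar\cX_{t_0},\dots,\bar\cX_{t_k}$ are $\cF_{t_k}$-measurable with finite $q$-th moments. Then ${}^\ell\bar\cX_{\cdot\wedge t_k}$ is an $\cF_{t_k}$-measurable path, and its law lies in $\cP_{\infty,2}$ because $\sup_s|{}^\ell\bar x_{s\wedge t_k}| = \max_{i\le k}|\bar x_{t_i}|$. The Lipschitz bounds in Assumption~\ref{ass assum:L2_lip_b_sigma}(i)--(iii) give linear growth:
\[
|b(t,x,\mu,a)| + \|\sigma(t,x,\mu)\|_F \le C\big(1+\|x_{\cdot\wedge t}\|_2+\cWtwo(\mu,\delta_0)\big),
\]
where the $|t-t'|^{1/4}$ term is harmless since $t\le T$. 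Because $\Delta W_k$ is independent of $\cF_{t_k}$, with moments of order $h^{1/2}$, the next value $\bar\cX_{t_{k+1}}$ is $\cF_{t_{k+1}}$-measurable and has finite $q$-th moment. For $k\le n$ the values coincide with $\xi_k$, and the recursion only reads the already defined values. Hence the whole process depends on $\xi$ only through $\xi_{\cdot\wedge t}$, which gives $\bar\cX^{t,\xi,\bar\a}=\bar\cX^{t,\xi_{\cdot\wedge t},\bar\a}$.

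For (ii), write $t=t_n\ge s=t_m$ and set $Y:=\bar\cX^{t,\xi,\bar\a}$. The process $\bar\cX^{s,Y,\bar\a}$ equals $Y_{t_k}$ for $k\le m$. For $m\le k<N$ it follows the Euler recursion with the same coefficients, control and increments. We argue by induction on $k\ge m$. If the two processes agree up to $t_k$, their interpolated stopped paths and the laws of those paths agree, so the increments agree as well. For $k<n$, $Y$ itself satisfies the recursion at index $k$ only if $k\ge n$. So the statement must be read with $s\ge t$, i.e. restarting at a later time from the process itself, since otherwise the identity fails for general $\xi$. Read that way, the claim is exactly this uniqueness-of-recursion argument: both sides coincide up to $s$, then are driven by the same recursion.

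For (iii), which I expect to be the main work, set $Z_k:=\max_{i\le k}|\bar\cX_{t_i}|$ for $k\ge n$ and write
\[
\bar\cX_{t_k}=\bar\cX_{t_n}+\sum_{j=n}^{k-1}b_j h+\sum_{j=n}^{k-1}\sigma_j\Delta W_j .
\]
The stochastic sum is a discrete martingale, so the discrete BDG inequality bounds the $q$-th moment of its running maximum by $C\,\sE\big[(\sum_j\|\sigma_j\|_F^2h)^{q/2}\big]$. Hölder's inequality then gives at most $C h\sum_j\sE\big[\|\sigma_j\|_F^q\big]$, and the drift sum is handled the same way. Lemma~\ref{le prop interpol}(i) bounds $\|{}^\ell x_{\cdot\wedge t_j}\|_2\le C\max_{i\le j}|x_{t_i}|$. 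Moreover $\cWtwo(\cL(\cdot),\delta_0)^q\le \sE\big[\|\cdot\|_2^q\big]$ by \eqref{eq:wasserstein}, which is again at most $C\,\sE[Z_j^q]$. Together these give
\[
\sE[Z_k^q]\le C\Big(1+\sE[Z_n^q]+h\sum_{j<k}\sE[Z_j^q]\Big),
\]
and the discrete Gronwall inequality yields the bound on the grid. Finally, $\sup_s|{}^\ell\bar\cX_s|=\max_i|\bar\cX_{t_i}|$ for the piecewise linear interpolation, which gives the second term of \eqref{eq eq:bound_X_square_euler}.
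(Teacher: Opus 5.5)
Your proof is correct and follows essentially the paper's argument. The explicit recursion gives adaptedness, dependence on $\xi_{\cdot\wedge t}$ only, and the flow property by pathwise uniqueness. The moment bound comes from linear growth, BDG plus H\"older, and a discrete Gronwall argument. Your only variations are harmless ones: you apply BDG to the discrete martingale rather than to the continuous Euler interpolation \eqref{eq de euler scheme cont}, and you get finiteness of the moments by induction rather than through the paper's localization.

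You are also right that (ii) holds only with $s\ge t$, that is, when restarting at a later grid time from the process itself. This is exactly the form $\bar\cX^{t_{n+1},\bar\cX^{t_n,\xi,\bar\alpha},\bar\alpha}=\bar\cX^{t_n,\xi,\bar\alpha}$ used in the proof of Lemma~\ref{lemma:DPP}, so the ``$t\ge s$'' in the statement is a typo.
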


\paragraph{The discrete-time optimal control problem}
For $\xi \in \mathbf{S}_2(\mathbb{F}^\pi)$, define
\begin{align}\label{eq objective_bar_J}
    \bar J(t_n, \xi, \bar \alpha) 
    := \E \Bigg[ \sum_{k=n}^{N-1} 
    f\Big(t_k, {}^\ell\!\bar{\mathcal{X}}^{t_n,\xi,\bar{\alpha}}_{\cdot \wedge t_k}, 
    \cL\big({}^\ell\!\bar{\mathcal{X}}^{t_n,\xi,\bar{\alpha}}_{\cdot \wedge t_k}\big),
    \bar{\alpha}_{t_k}\Big) h 
    + \Terminal\Big({}^\ell\!\bar{\mathcal{X}}^{t_n,\xi,\bar{\alpha}}_{\cdot}, 
    \cL\big({}^\ell\!\bar{\mathcal{X}}^{t_n,\xi,\bar{\alpha}}_{\cdot}\big)\Big) 
    \Bigg].
\end{align}
This functional is the discrete-time counterpart of \eqref{eq:objective_J}. We then define the associated value function by
\begin{align}\label{eq barV}
    \bar V(t_n, \xi) := \inf_{\bar{\alpha} \in \mathcal{A}^\pi} \bar J(t_n, \xi, \bar \alpha).
\end{align}

\begin{remark}\label{rmk:nonanticipate_J_V} 
Let $t\in\pi $. The discrete-time process $\bar \cX^{t,\xi,\bar\a}$   depends on the values of $\xi$ up to time $t, $  i.e.,
$
 \bar \cX^{t,\xi, \bar\a} = \bar \cX^{t,\xi_{\cdot\wedge t},\bar \a}.
 $
Since $f$ is non-anticipative (cf. Remark \ref{rmk:nonanticipate_fg}), 
$\bar J$ and $ \bar V,$ also satisfy the non-anticipative property, i.e., 
$$ 
  \bar J(t,\xi, \bar\a) = \bar J(t,\xi_{\cdot \wedge t}, \bar\a),
  \quad 
  \bar V(t,\xi) = \bar V(t,\xi_{\cdot \wedge t}).
$$
\end{remark}
In the following lemma, we establish the continuity of the objective function $\bar J$ and the value function $\bar V$ in the Euler scheme:
\begin{lemma}\label{lemma:continuity_of_barV}
Suppose that Assumptions \ref{ass assum:L2_lip_b_sigma} and \ref{assum:terminal_cost_f_g_Phi} hold. Then $\bar{J}$ and $\bar{V}$ are continuous in $\xi$.
More precisely, for any $t_n \in \pi$, any $\bar{\alpha} \in \mathcal{A}^\pi$, any sequence $\{\xi^{(j)}\}_{j\in\sN} \subset \bS_2(\sF^\pi)$, and any $\xi \in \bS_2(\sF^\pi)$ such that
\(
\lim_{j \ra \infty}\sum_{k \le n} \mathbb{E}\big[|\xi^{(j)}_{k} - \xi_k|^2\big] = 0,
\)
we have
\[
\lim_{j\ra \infty}\bar{J}(t_{n}, \xi^{(j)}, \bar{\alpha}) =\bar{J}(t_{n}, \xi, \bar{\alpha}),
\quad \text{and} \quad
\lim_{j\ra \infty} \bar{V}(t_{n}, \xi^{(j)}) =\bar{V}(t_{n}, \xi).
\]
\end{lemma}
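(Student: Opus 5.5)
We argue in two stages: first a stability estimate for the Euler scheme in the initial condition, uniform in the control, and then continuity of $\bar J$ and $\bar V$ from the local Lipschitz property of $f$ and $\Terminal$.

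\textbf{Stage 1: stability of the Euler scheme.} Fix $\bar\alpha\in\cA^\pi$ and write $X^{j}:=\bar\cX^{t_n,\xi^{(j)},\bar\alpha}$ and $X:=\bar\cX^{t_n,\xi,\bar\alpha}$. Set $e_k:=\E[|X^j_{t_k}-X_{t_k}|^2]$. For $k\le n$ we have $e_k=\E[|\xi^{(j)}_k-\xi_k|^2]$.

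For $k\ge n$, subtract the two recursions in \eqref{eq de euler scheme}. The drift difference and the diffusion difference at step $k$ are bounded by Assumption~\ref{ass assum:L2_lip_b_sigma}. Note that the control argument $\bar\alpha_{t_k}$ is the same in both recursions, so no control term appears; this is what makes the estimate uniform in $\bar\alpha$. The resulting bound is
\[
C\Big(\big\|{}^\ell X^j_{\cdot\wedge t_k}-{}^\ell X_{\cdot\wedge t_k}\big\|_2+\cWtwo\big(\cL({}^\ell X^j_{\cdot\wedge t_k}),\cL({}^\ell X_{\cdot\wedge t_k})\big)\Big).
\]
The Wasserstein term is controlled by \eqref{eq:wasserstein} using the natural coupling. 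Lemma~\ref{le prop interpol}(i), applied to the path difference, gives $\|{}^\ell X^j_{\cdot\wedge t_k}-{}^\ell X_{\cdot\wedge t_k}\|_2^2\le \frac{h}{T}\sum_{i\le k}|X^j_{t_i}-X_{t_i}|^2$.

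Next, square the increment and take expectations. The cross term involving $\Delta W_k$ vanishes by independence from $\cF_{t_k}$, and $\E|\Delta W_k|^2=d^W h$. This yields
\[
e_{k+1}\le (1+Ch)e_k+Ch\sum_{i\le k}e_i .
\]
A discrete Gronwall argument then gives $\max_{k\le N}e_k\le C\sum_{k\le n}\E[|\xi^{(j)}_k-\xi_k|^2]=:C\delta_j$. The constant $C$ does not depend on $\bar\alpha$ or $j$. Consequently $\E\big[\|{}^\ell X^j-{}^\ell X\|_2^2\big]\le C\delta_j$, and the same bound holds for all stopped paths and for the $\cWtwo$ distance between their laws.

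\textbf{Stage 2: continuity of $\bar J$.} By Assumption~\ref{assum:terminal_cost_f_g_Phi}(i), each running term $|f(t_k,{}^\ell X^j_{\cdot\wedge t_k},\dots)-f(t_k,{}^\ell X_{\cdot\wedge t_k},\dots)|$ and the terminal term are bounded by a growth factor times a Lipschitz factor. The growth factor is
\[
G:=1+\|{}^\ell X^j\|_\infty+\|{}^\ell X\|_\infty+\cWinf(\cdot,\delta_0)\text{-terms},
\]
and the Lipschitz factor is $\|\cdot\|_2$-difference $+\ \cWtwo$.

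Apply Cauchy--Schwarz. The first factor is bounded by Proposition~\ref{pr prop:moment_estimate_first_euler}(iii) with $q=2$, since $\sup_j\E[\max_i|\xi^{(j)}_i|^2]<\infty$; this follows because $\xi^{(j)}\to\xi$ in $L^2$ on the finitely many indices $k\le n$. The second factor is $\le C\sqrt{\delta_j}$ by Stage 1. Hence
\[
|\bar J(t_n,\xi^{(j)},\bar\alpha)-\bar J(t_n,\xi,\bar\alpha)|\le C(1+\|\xi\|+\|\xi^{(j)}\|)\sqrt{\delta_j}\to0,
\]
where $C$ is uniform in $\bar\alpha$.

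\textbf{Continuity of $\bar V$.} Because the Stage 2 bound is uniform in $\bar\alpha$, the elementary inequality $|\inf_{\bar\alpha}\bar J(\xi^{(j)},\cdot)-\inf_{\bar\alpha}\bar J(\xi,\cdot)|\le\sup_{\bar\alpha}|\bar J(\xi^{(j)},\bar\alpha)-\bar J(\xi,\bar\alpha)|$ gives $\bar V(t_n,\xi^{(j)})\to\bar V(t_n,\xi)$.

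\textbf{Main obstacle.} The delicate step is Stage 1. One must check that the path-dependent, interpolated arguments in the coefficients are controlled by the history $\sum_{i\le k}e_i$, which is what Lemma~\ref{le prop interpol}(i) provides. One must also check that $C$ stays uniform in $\bar\alpha$, which depends on the control entering both recursions identically. A secondary point is that the $\cWinf$ and sup-norm growth factors require the $L^\infty$-type moment bound of Proposition~\ref{pr prop:moment_estimate_first_euler}(iii), not just the $L^2$ estimate.
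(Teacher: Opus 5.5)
Your proposal is correct and follows the paper's proof essentially step for step. It uses a discrete Gronwall stability estimate for the Euler scheme in the initial data, uniform in $\bar\alpha$ because the control enters both recursions identically, then the local Lipschitz property of $f$ and $g$ with the moment bound of Proposition~\ref{pr prop:moment_estimate_first_euler}(iii) for continuity of $\bar J$ uniformly in $\bar\alpha$, and finally $|\inf \bar J - \inf \bar J| \le \sup |\bar J - \bar J|$ for $\bar V$; you simply write out the Cauchy--Schwarz step the paper calls straightforward. One harmless slip, which the paper shares: for the stopped interpolated path, $T\|{}^\ell x_{\cdot\wedge t_k}\|_2^2$ also contains the tail $(T-t_k)|x_{t_k}|^2$, which Lemma~\ref{le prop interpol}(i) does not cover, but it only adds $Ch\,e_k$ to your recursion, so $e_{k+1}\le(1+Ch)e_k+Ch\sum_{i\le k}e_i$ still holds.
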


\proof 
Fix $t_n$ in $\pi$ and $\bar \a \in \cA^\pi.$
To show $\bar J$ and $\bar V$ are continuous with respect to given paths, we first show that
$\xi \mapsto \bar \cX^{t_n, \xi, \bar \a}$ is continuous. 

\paragraph{Step 1. Continuity of $\xi \mapsto \bar \cX^{t_n, \xi, \bar \a}$} Fix the discrete processes $ \xi$ and $ \eta$ in $\bS_2(\sF^\pi).$ 
To ease the notation, we denote $ \bar\cX^\xi = \bar\cX^{t_n, \xi, \bar \a}, \bar\cX^\eta =\bar \cX^{t_n, \eta, \bar \a}.$ We also denote $\Delta b_k = b(t_k,{}^\ell\!\bar{\mathcal{X}}^{\xi}_{\cdot \wedge t_k} ,\cL\!\left({}^\ell\!\bar{\mathcal{X}}^{\xi}_{\cdot\wedge t_k}\right),\bar{\alpha}_{k}) - b(t_k,{}^\ell\!\bar{\mathcal{X}}^{\eta}_{\cdot \wedge t_k} ,\cL\!\left({}^\ell\!\bar{\mathcal{X}}^{\eta}_{\cdot\wedge t_k}\right),\bar{\alpha}_{k})$, and similarly for $\Delta \sigma_k.$

By \eqref{eq de euler scheme}, for any $t_k \in \pi$ and $t_k \geq t_n,$
\begin{align*}
    &\sE[|\bar\cX^\xi_{t_{k+1}} - \bar\cX^\eta_{t_{k+1}}|^2] \\
    &= \sE[|\bar\cX_{t_k}^\xi - \bar\cX_{t_k}^\eta|^2] + {2} h \sE[ (\bar\cX_{t_k}^\xi -\bar \cX_{t_k}^\eta)^\top \Delta b_k] + h^2 \sE[|\Delta b_k|^2] + h \sE[|\Delta \sigma_k|^2] \\
    &\leq (1+h) \sE[|\bar\cX_{t_k}^\xi -\bar \cX_{t_k}^\eta|^2] + Ch \sE [\|
    \upl\bar\cX_{\cdot\wedge t_k}^\xi -\upl \bar\cX_{\cdot \wedge t_k}^\eta \|_2^2] \\
    &\leq (1+h)\sE[|\bar\cX_{t_k}^\xi -\bar \cX_{t_k}^\eta|^2]  + C h^2 \sum_{i =0}^k \sE[|\bar\cX_{t_i}^\xi -\bar \cX_{t_i}^\eta|^2] \\
    &= (1+h)\sE[|\bar\cX_{t_k}^\xi -\bar \cX_{t_k}^\eta|^2]  + C h^2 \sum_{i=n+1}^k \sE[|\bar\cX_{t_i}^\xi -\bar \cX_{t_i}^\eta|^2] + C h^2 \sum_{i=0}^n \sE[|\xi_i-\eta_i|^2],
\end{align*}
where the first inequality follows from Assumption \ref{ass assum:L2_lip_b_sigma}, and the second inequality follows from Lemma \ref{le prop interpol} (i).
Therefore,
by summing up the above inequality from $n$ to $k$, and denoting $y_k = \sE[|\bar\cX_{t_k}^\xi -\bar \cX_{t_k}^\eta|^2]  $, we have
\begin{align*}
    y_{k+1} \leq y_n + Ch\sum_{i=n}^k y_i + Ch \sum_{i=0}^n \sE[|\xi_i - \eta_i|^2]
\end{align*}
Hence, by Gronwall's inequality,
$y_k \leq  C (1+Ch)   \sum_{i=0}^n \sE[|\xi_i - \eta_i|^2] .
$
This shows that $\xi \mapsto \bar \cX^{\xi}$ is continuous.

\paragraph{Step 2. Continuity of $\bar V$ and $\bar J$} We begin noticing that, for any $t\in\pi$ and for every $\xi, \eta \in\mathbf{S}_2(\mathbb{F^\pi})$,
$
|\bar V(t, \xi)- \bar V(t, \eta)| \leq \sup _{\bar \alpha \in \mathcal{A}^\pi}|\bar J(t, \xi, \bar \alpha)-\bar J(t, \eta, \bar \alpha)| .
$
Then, the continuity of $\bar V$ follows once we prove that $\bar J$ is continuous in $\xi$ uniformly with respect to $\alpha$.
Such a property is a straightforward consequence of the last statement of Step 1. and of Assumption \ref{ass assum:L2_lip_b_sigma}.
\eproof

\begin{lemma}\label{lemma:discrete_law_invariance}
Suppose that Assumptions \ref{ass assum:L2_lip_b_sigma} and \ref{assum:terminal_cost_f_g_Phi} hold.  For any  $t\in \pi$, any $\xi$ and $\xi'$ in $\bS_2(\sF^\pi)$ such that $\cL(\xi) = \cL(\xi')$, we have
$\bar V(t, \xi)=\bar V(t, \xi'). $ 
\end{lemma}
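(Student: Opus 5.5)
By symmetry it suffices to show $\bar V(t_n,\xi)\le \bar V(t_n,\xi')$. I will show that for every $\bar\alpha'\in\cA^\pi$ there is an $\bar\alpha\in\cA^\pi$ with $\bar J(t_n,\xi,\bar\alpha)=\bar J(t_n,\xi',\bar\alpha')$, possibly only up to an arbitrary $\varepsilon$. By Remark~\ref{rmk:nonanticipate_J_V} we may replace $\xi,\xi'$ by $\xi_{\cdot\wedge t_n},\xi'_{\cdot\wedge t_n}$. Hence only the $\cF_{t_n}$-measurable vectors $Z=(\xi_0,\dots,\xi_n)$ and $Z'=(\xi'_0,\dots,\xi'_n)$ matter, and these have the same law on $(\R^d)^{n+1}$.

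\textbf{Step 1 (functional representation of the control).} Each $\bar\alpha'_k$ is $\cF_{t_k}=\mathcal G\vee\cF^W_{t_k}$-measurable. By the Doob--Dynkin lemma and the representation $\mathcal G=\sigma(\Gamma^{\mathcal G})$, we can write
\[
\bar\alpha'_k=a_k\big(\Gamma^{\mathcal G},(W_s)_{s\le t_k}\big)\quad\text{a.s.}
\]
for Borel maps $a_k$. The difficulty is that $Z'$ is a function of $(\Gamma^{\mathcal G},W_{\cdot\wedge t_n})$, and the pair $(Z,\text{other randomness})$ need not have the same joint structure as $(Z',\text{other randomness})$. The standard remedy is the one in \cite[Theorem~3.6]{cosso2023optimal}. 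Using that $\Gamma^{\mathcal G}$ is uniform and independent of $W$, and that $(\Omega,\cF,\sP)$ is atomless on $\mathcal G$, we construct a measure-preserving-type transfer. By a conditional quantile (disintegration) argument, we build a random variable $U$ that is $\sigma(\Gamma^{\mathcal G})\vee\cF^W_{t_n}$-measurable, independent of $Z$ and of the future increments $(W_s-W_{t_n})_{s\ge t_n}$, and such that
\[
\big(Z,\,U,\,(W_s-W_{t_n})_{s\ge t_n}\big)\overset{d}{=}\big(Z',\,\Gamma',\,(W_s-W_{t_n})_{s\ge t_n}\big),
\]
where $\Gamma'$ encodes $(\Gamma^{\mathcal G},W_{\cdot\wedge t_n})$. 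If an exact construction fails, for instance because $Z$ is not rich enough to carry an independent uniform, I would first approximate. Specifically, I would perturb $Z$ by an independent $\mathcal G$-measurable uniform mixed in with a small weight, or use the density argument of \cite{cosso2023optimal}. I would then pass to the limit via the continuity of $\bar V$ in Lemma~\ref{lemma:continuity_of_barV}. I expect this step to be the main obstacle.

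\textbf{Step 2 (transported control).} Define $\bar\alpha_k$ by substituting $U$ and the Brownian path into $a_k$ in the form dictated by Step 1. For $k\ge n$ this means $a_k$ evaluated at the encoded pre-$t_n$ information $U$ and at the increments $(W_s-W_{t_n})_{t_n\le s\le t_k}$. For $k<n$ the control is irrelevant, since the controls only enter from index $n$ on. Each $\bar\alpha_k$ is then $\cF_{t_k}$-measurable, so $\bar\alpha\in\cA^\pi$.

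\textbf{Step 3 (equality of laws and of costs).} By induction on $k\ge n$ in the Euler recursion \eqref{eq de euler scheme}, I will show that the joint laws of $(Z,\bar\cX^{t_n,\xi,\bar\alpha}_{t_{n+1}},\dots,\bar\cX_{t_k},\bar\alpha_n,\dots,\bar\alpha_k)$ and of their primed counterparts coincide. The key facts are these. Each new state is the same measurable function, given by $b,\sigma$ and the linear interpolation, of the previous states, the current control, the marginal law (equal at each step by the induction hypothesis) and $\Delta W_k$. Moreover, $\Delta W_k$ is independent of the past in both systems. Equality of the joint laws of the interpolated paths and controls then gives equality of every term in \eqref{eq objective_bar_J}, so $\bar J(t_n,\xi,\bar\alpha)=\bar J(t_n,\xi',\bar\alpha')$. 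Taking the infimum yields the inequality, and exchanging $\xi$ and $\xi'$ yields the claim.
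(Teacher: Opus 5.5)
Your overall architecture matches the route the paper has in mind when it omits the proof and defers to \cite[Theorem~3.6]{cosso2023optimal}. That route is: represent $\bar\alpha'$ as a Borel functional of the pre-$t_n$ information $\Gamma'$ and the post-$t_n$ Brownian path, transport it to the $\xi$-side, compare laws along the Euler recursion, and close with Lemma~\ref{lemma:continuity_of_barV}. The gap is in Step~1, which you rightly flag as the crux but do not resolve.

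\textbf{The requirements on $U$ are inconsistent.} Since $Z'$ is a function of $\Gamma'$, the identity $(Z,U)\overset{d}{=}(Z',\Gamma')$ forces $Z$ to be a function of $U$. So $U$ cannot be independent of $Z$ unless $Z$ is deterministic. What you actually need is an $\mathcal F_{t_n}$-measurable uniform $\tilde U$ independent of $Z$. You would then set $U:=\Phi(Z,\tilde U)$ by the randomization (conditional quantile) lemma.

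\textbf{Such a $\tilde U$ need not exist, and this is the main case, not a corner case.} For $n=0$, take $\xi_0=\Gamma^{\mathcal G}$. It generates $\mathcal F_0=\mathcal G$, so every admissible $\bar\alpha_0$ is a function of $\xi_0$. Now take $\xi'_0=2\Gamma^{\mathcal G}-\lfloor 2\Gamma^{\mathcal G}\rfloor$, which has the same law. It admits $\bar\alpha'_0=\mathbf 1_{\{\Gamma^{\mathcal G}>1/2\}}$, which is independent of $\xi'_0$. No $\mathcal F_0$-measurable $\bar\alpha_0$ reproduces the joint law of $(\xi'_0,\bar\alpha'_0)$. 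In discrete time there is no short interval after $t_n$ from which to borrow randomness before $\bar\alpha_n$ is fixed, so exact transfer genuinely fails. Your fallback of perturbing $Z$ by an independent $\mathcal G$-measurable uniform presupposes exactly the object that is missing, so it is circular.

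\textbf{The approximation that works is quantization.} Set $\xi^{(j)}_k:=\varphi_j(\xi_k)$ and $\xi'^{(j)}_k:=\varphi_j(\xi'_k)$ for $k\le n$, with $\varphi_j$ Borel, countably valued and $|\varphi_j(x)-x|\le 1/j$. This preserves adaptedness and equality of laws, and it gives the $L^2$ convergence required in Lemma~\ref{lemma:continuity_of_barV}.

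For countably valued $Z^{(j)}$, consider each atom $B_i=\{Z^{(j)}=z_i\}$ of positive probability. The map $F_i(s):=\mathbb P(\Gamma^{\mathcal G}\le s\mid B_i)$ is continuous because $\Gamma^{\mathcal G}$ is atomless. Hence $\tilde U:=\sum_i\mathbf 1_{B_i}F_i(\Gamma^{\mathcal G})$ is uniform, $\mathcal F_{t_n}$-measurable and independent of $Z^{(j)}$. Being $\mathcal F_{t_n}$-measurable, the pair $(Z^{(j)},\tilde U)$ is also independent of the post-$t_n$ increments.

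Your Steps~2--3 then give $\bar V(t_n,\xi^{(j)})=\bar V(t_n,\xi'^{(j)})$ exactly. Lemma~\ref{lemma:continuity_of_barV} lets you send $j\to\infty$ on both sides.

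One smaller point in Step~3: run the induction on the joint law of $(U,(W_s-W_{t_n})_{s\ge t_n})$ rather than on the increments alone. This is needed because $\bar\alpha'_k$ may depend on the whole Brownian path on $[t_n,t_k]$.
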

\begin{remark}\label{re intrinsic value disc time control}
    The above lemma allows us to define the intrinsic value of the discrete time control problem $\bar V(t_n,\mu)$, for $\mu \in \cP_2((\R^d)^N)$.  
\end{remark} 
The proof of Lemma \ref{lemma:discrete_law_invariance} follows the similar lines as that of \cite[Theorem~3.6]{cosso2023optimal} for continuous time MKV problem. Hence, it is omitted here.


%
    

\paragraph{Dynamic programming principle} We next establish the discrete dynamic programming principle (DPP) for the Euler update in \eqref{eq de euler scheme}.

\begin{lemma}[Dynamic programming principle]\label{lemma:DPP}
            Suppose that Assumptions \ref{ass assum:L2_lip_b_sigma} and \ref{assum:terminal_cost_f_g_Phi} hold. Then the lifted value function for the Euler scheme update $\bar \cX$ in \eqref{eq de euler scheme} satisfies the dynamic programming principle (DPP): for every $1 \leq n \leq N$, and every $\xi \in \mathbf{S}_2(\sF^\pi)$, 
\begin{align}\label{eq barV dpp}
    \bar V(t_n, \xi) := \inf_{\bar{\alpha} \in \mathcal{A}^\pi} 
    \mathbb{E} \left[ f \left(t_n,{}^\ell\!\bar{\mathcal{X}}^{t_n,\xi,\bar{\alpha}}_{\cdot} ,\cL\!\left({}^\ell\!\bar{\mathcal{X}}^{t_n,\xi,\bar{\alpha}}_{\cdot}\right),\bar{\alpha}_{n}\right) h
    +  \bar V(t_{n+1}, \bar{\mathcal{X}}^{t_n,\xi,\bar{\alpha}}_{\cdot}) \right]
\end{align}
\end{lemma}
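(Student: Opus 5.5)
We prove the two inequalities between $\bar V(t_n,\xi)$ and the right-hand side of \eqref{eq barV dpp}, which we call $\bar W(t_n,\xi)$. Both use the flow property of Proposition~\ref{pr prop:moment_estimate_first_euler}(ii), the law invariance of Lemma~\ref{lemma:discrete_law_invariance}, and the continuity of Lemma~\ref{lemma:continuity_of_barV}.

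\emph{Step 1: $\bar V \ge \bar W$.} Fix $\bar\alpha\in\cA^\pi$ and set $\bar\cX:=\bar\cX^{t_n,\xi,\bar\alpha}$. The flow property gives, for $k\ge n+1$,
\[
\bar\cX^{t_n,\xi,\bar\alpha}_{t_k}=\bar\cX^{t_{n+1},\bar\cX,\bar\alpha}_{t_k}.
\]
We then split the sum in \eqref{eq objective_bar_J} into the $k=n$ term and the remainder. Because $\bar\cX^{t_n,\xi,\bar\alpha}$ is stopped at $t_n$ in its first cost term, non-anticipativity (Remark~\ref{rmk:nonanticipate_fg}) turns the $k=n$ term into the running cost of \eqref{eq barV dpp}. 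The remainder equals $\bar J(t_{n+1},\bar\cX,\bar\alpha)$, which is at least $\bar V(t_{n+1},\bar\cX)$. Taking the infimum over $\bar\alpha$ gives $\bar V(t_n,\xi)\ge \bar W(t_n,\xi)$.

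\emph{Step 2: $\bar V \le \bar W$.} This is the main obstacle. We must build, from $\bar\alpha$ on $\{n\}$ and an $\varepsilon$-optimal continuation from $t_{n+1}$, a single control in $\cA^\pi$, and the mean-field dependence prevents conditioning pathwise on $\bar\cX_{t_{n+1}}$. We resolve this with law invariance plus a measurable-selection-free approximation.

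Fix $\bar\alpha$ and set $\eta:=\bar\cX^{t_n,\xi,\bar\alpha}_{\cdot\wedge t_{n+1}}$. This variable is $\cF_{t_{n+1}}$-measurable and $\bar V(t_{n+1},\eta)$ depends only on $\cL(\eta)$. Choose $\beta\in\cA^\pi$ with $\bar J(t_{n+1},\eta,\beta)\le \bar V(t_{n+1},\eta)+\varepsilon$. The task is to realize such a $\beta$ that is adapted and consistent with the given $\eta$, which is straightforward since $\beta$ is defined on the same filtration. Define the concatenation $\tilde\alpha_k=\bar\alpha_k$ for $k\le n$ and $\tilde\alpha_k=\beta_k$ for $k>n$. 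Then $\tilde\alpha\in\cA^\pi$, and $\bar\cX^{t_n,\xi,\tilde\alpha}$ agrees with $\eta$ up to $t_{n+1}$, since only $\tilde\alpha_n=\bar\alpha_n$ enters there. Applying the flow property again yields
\[
\bar V(t_n,\xi)\le \bar J(t_n,\xi,\tilde\alpha)=\E[f\,h]+\bar J(t_{n+1},\eta,\beta)\le \E[f\,h]+\bar V(t_{n+1},\eta)+\varepsilon .
\]
Since $\bar V(t_{n+1},\eta)=\bar V(t_{n+1},\bar\cX)$ by Remark~\ref{rmk:nonanticipate_J_V}, letting $\varepsilon\to0$ and taking the infimum over $\bar\alpha$ gives the reverse inequality.

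The delicate points are where the proof needs care. First, the infimum in $\bar V(t_{n+1},\cdot)$ ranges over all of $\cA^\pi$, so $\beta$ may depend on all of $\cF_{t_{n+1}}\supset\sigma(\eta)$. Because the filtration is the same, no randomization or measurable selection is needed. Second, in the first cost term of \eqref{eq barV dpp}, ${}^\ell\bar\cX_\cdot$ should be read as stopped at $t_n$, which non-anticipativity handles. Should a reduced-filtration variant be required, we would instead invoke Lemma~\ref{lemma:discrete_law_invariance} to transport $\beta$ to a copy and use Lemma~\ref{lemma:continuity_of_barV} for approximation.
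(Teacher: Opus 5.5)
Your proposal is correct and follows essentially the same route as the paper. For $\bar V\ge$ the right-hand side, you split $\bar J(t_n,\xi,\bar\alpha)$ at $t_n$ via the flow property and bound the tail by $\bar V(t_{n+1},\cdot)$; for the reverse inequality, you concatenate $\bar\alpha$ on step $n$ with an $\varepsilon$-optimal continuation in $\cA^\pi$ (the paper's $\gamma^\varepsilon$), which is admissible because both pieces are adapted to the same filtration. As you yourself observe, neither the law invariance lemma nor the continuity lemma is actually used, so you can drop them from your opening sentence.
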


\proof 
We first define for any $t_n \in \pi, \xi \in \bS_2(\sF^\pi)$,
\begin{equation}\label{eq:Lambda}
        \Lambda(t_n, \xi) \coloneqq \inf_{\bar \alpha \in \cA^\pi} 
      \sE \left[f(t_n, \xi, \cL(\xi_{\cdot \wedge t_n}), \bar \a_n) \right] h + \bar V \left(t_{n+1}, \bar \cX_\cdot ^{t_n, \xi, \bar \alpha}\right).
    \end{equation}

\paragraph{Step 1} \textit{Prove the inequality $\Lambda(t_n, \xi) \leq \bar V (t_n, \xi).$}
    Fix $t_n \in \pi$, $\xi \in \bS_2(\sF^\pi)$, and $\bar \alpha \in \cA^\pi$. By \eqref{eq:Lambda},
    \begin{equation*}
        \begin{aligned}
           & \Lambda(t_n, \xi) \leq  \sE \left[f(t_n, \xi, \cL(\xi), \bar \a_n) \right] h + \bar V \left(t_{n+1}, \bar \cX_\cdot^{t_n,\xi, \bar \alpha} \right)
            \\&\leq \sE\Bigg[ f(t_n, \xi, \cL(\xi), \bar \a_n)  h
            + \sum_{i=n+1}^{N-1} f\left(t_{i}, \upl 
           \bar \cX_{\cdot \wedge t_{i}}^{t_{n+1}, \bar \cX^{t_n,\xi,\bar \alpha}, \bar\alpha }, 
           \cL\left( \upl \bar \cX_{\cdot \wedge t_{i}}^{t_{n+1}, \bar \cX^{t_n,\xi,\bar \alpha}, \bar\alpha }\right), 
           \bar \a_i \right) h \\
           &
            \qquad+\Terminal\left(\upl \bar \cX_\cdot^{t_{n+1}, \bar \cX_\cdot^{t_n,\xi,\bar \alpha}, \bar\alpha }, \cL \left(\upl \bar \cX_\cdot^{t_{n+1}, \bar \cX_\cdot^{t_n,\xi,\bar \alpha}, \bar\alpha }\right)\right) \Bigg],
        \end{aligned}
    \end{equation*}
    where the second inequality follows from \eqref{eq barV}.
    By applying the flow property (cf. Proposition \ref{pr prop:moment_estimate_first_euler}(ii)), we have
    $
  \bar \cX_{t_i}^{t_{n+1}, \bar\cX^{t_n, \xi, \bar\alpha}, \bar\alpha} =   \bar \cX_{t_i}^{t_n, \xi, \bar\alpha} , \text{ for any } n+1 \leq i \leq N.
     $
    As $\bar \alpha$ is an arbitrary control in $\cA^\pi$,  we have
    \begin{equation}
    \begin{aligned}
            \Lambda(t_n, \xi) &\leq \inf_{\bar \alpha \in \cA^\pi}  \sE\Big[ 
            \sum_{i=n}^{N-1} f\left(t_{i}, 
         \upl  \bar \cX_{\cdot \wedge t_{i}}^{t_{n}, \xi, \bar\alpha }, 
           \cL\left( \upl \bar \cX_{\cdot \wedge t_{i}}^{t_{n}, \xi, \bar\alpha }\right), 
           \bar \a_i \right) h +\Terminal\left( \upl \bar \cX_\cdot^{t_{n}, \xi, \bar\alpha },\cL \left( \upl\bar \cX_\cdot^{t_{n}, \xi, \bar\alpha }\right)\right) \Big]\\
            & = \bar V\left(t_n, \xi\right).
    \end{aligned}
    \end{equation}

    \paragraph{Step 2} \textit{Prove the inequality $\Lambda(t_n, \xi) \geq \bar V (t_n, \xi).$ }
    Fix $\vep > 0.$
    By \eqref{eq:Lambda},
    we can find $\alpha^\vep \in \cA^\pi$ s.t.
\begin{equation}\label{eq:Lambda_geq_V}
     \Lambda(t_n, \xi) \geq  \sE \left[f(t_n, \xi, \cL(\xi),  \a^\vep_n) \right] h + \bar V\left(t_{n+1}, \bar \cX^{t_n, \xi, \alpha^\vep}_\cdot\right) - \vep. 
\end{equation}
By definition of $\bar V$ in \eqref{eq barV}, there exists ${\zeta^\vep} \in \cA^\pi$ s.t.
\begin{equation}\label{eq:V_geq_Phi}
\begin{aligned}
     \bar V\left(t_{n+1}, \bar \cX^{t_n,\xi,\alpha^\vep}\right) 
    \geq ~&\sE \Bigg[ \sum_{i=n+1}^{N-1} f\left(t_{i}, 
        \upl   \bar \cX_{\cdot \wedge t_{i}}^{t_{n+1}, \bar \cX^{t_n,\xi, \alpha^\vep}, \zeta^\vep }, 
           \cL\left( \upl \bar \cX_{\cdot \wedge t_{i}}^{t_{n+1}, \bar \cX^{t_n,\xi, \alpha^\vep}, \zeta^\vep }\right), 
           \zeta_i^\vep \right) h \\
           &\quad +\Terminal\left(\upl\bar \cX_\cdot^{t_{n+1}, \bar \cX_\cdot^{t_n,\xi,\alpha^\vep}, \zeta^\vep }, \cL \left(\upl \bar \cX_\cdot^{t_{n+1}, \bar \cX^{t_n,\xi,\alpha^\vep}, \zeta^\vep }\right)\right)\Bigg] -\vep .
\end{aligned}
\end{equation}
Now set 
$\gamma^\vep = \alpha^\vep \mathbbm{1}_{[t_0, t_{n+1})} + \zeta^\vep \mathbbm{1}_{[t_{n+1}, t_N]}$ and notice that $\gamma^\vep \in \cA^\pi.$
Using again the flow property (cf. Proposition \ref{pr prop:moment_estimate_first_euler}(ii)), we obtain
$$
\bar \cX_{t_i}^{t_{n+1}, \bar \cX^{t_n, \xi, \alpha^\vep}, \zeta^\vep} = \bar \cX_{t_i}^{t_n, \xi, \gamma^\vep}, \text{ for }  n+1 \leq i \leq N.
$$
Hence, by combining the above equality with \eqref{eq:Lambda_geq_V}, \eqref{eq:V_geq_Phi}, and \eqref{eq barV}, we obtain,
$$
\begin{aligned}
      &\Lambda(t_n, \xi) \geq   \sE \left[f(t_n, \xi, \cL(\xi_{\cdot \wedge t_n}),  \a^\vep_n) \right] h + \bar V\left(t_{n+1}, \bar \cX^{t_n,\xi,\alpha^\vep}\right) -\vep
    \\&\geq  
   \sE\Bigg[ 
            \sum_{i=n}^{N-1} f\left(t_{i}, 
           \bar \cX_{\cdot \wedge t_{i}}^{t_{n}, \xi, \gamma^\vep }, 
           \cL\left(\bar \cX_{\cdot \wedge t_{i}}^{t_{n}, \xi, \gamma^\vep }\right), 
           \gamma^\vep_i \right) h +\Terminal\left(\bar \cX_\cdot^{t_{n}, \xi, \gamma^\vep }, \cL \left( \bar \cX_\cdot^{t_{n}, \xi, \gamma^\vep }\right)\right) \Bigg]- 2\vep\\
    &\geq \bar V(t_n, \xi) - 2 \vep.
\end{aligned}
$$
The claim then follows from the arbitrariness of $\vep.$ \eproof

\subsection{Approximation error}
We quantify the error between the continuous-time value \eqref{eq:state_dynamics_dX} and the Euler value \eqref{eq de euler scheme} under a structural condition on the drift and cost functions. This key assumption is motivated by Filippov's convexity condition \cite{Filippov:1962,BokanowskiGammoudiZidani:2022}: it guarantees the existence of piecewise constant controls that yield the same averaged drift without increasing the running cost (Remark~\ref{rmk:example_of_hstruct}).

\begin{assumption}\label{ass assump:Hstruct}
    For every given pair $(t, x,\mu) \in [0,T] \times \pathspace \times \cP_{\infty, 2}$, 
\begin{equation*}
 \left \{ 
    \begin{pmatrix}
        b(t,x,\mu,a) \\ f(t,x,\mu,a)+\eta
    \end{pmatrix} : \ a \in A \text{ and } \eta  \geq 0  \right \}
    \text{ is a convex set of } \sR^{d+1}.
\end{equation*}
\end{assumption}

\begin{remark} \label{rmk:construct_of_a}
  $(i)$ Fix $(t_n,x,\mu)\in[0,T-h]\times\pathspace\times\cP_2(\pathspace)$. 
Given $\{a_s\}_{s\in [t_n,t_{n+1}]} \subseteq A$, 
Assumption~\ref{ass assump:Hstruct} guarantees that there exists $\bar a \in A$ and $ \eta \geq 0$ such that 
\be
      & \frac{1}{h}  \int_{t_{n}}^{t_{n+1}} b(t_n,x,\mu,a_s) \d s &=b(t_n,x,\mu,\bar a), \\
      \label{eq:b_f_struct:1}
      & \frac{1}{h}\int_{t_{n}}^{t_{n+1}} f(t_n,x,\mu,a_s) \d s &=f(t_n,x,\mu,\bar a) + \eta \geq f(t_n,x,\mu,\bar a).
      \label{eq:b_f_struct:2}
\ee
$(ii)$ Following the previous point, let us consider a process $X \in \mathbf{S}_2(\mathbb{F}^\pi)$ and a control in $\alpha \in \cA$.
  For any 
$0 \le n \le N-1$, there exists $\bar\a_n$, an $\cF_{t_n}$-measurable $A$-valued random variable which satisfies almost surely that

\begin{align}
\label{eq eq:an_construct:1}
&\mathbb{E}\left[
   \frac{1}{h}
   \int_{t_n}^{t_{n+1}} 
    b\!\left(t_n, \upl X_{\cdot \wedge t_n}, 
      \cL\!\left(\upl X_{\cdot \wedge t_n}\right), 
      \alpha_t\right) \mathrm{d} t 
    \,\bigg\rvert\, \mathcal{F}_{t_n} \right]
    =
    b\!\left(t_n, \upl X_{\cdot \wedge t_n}, 
      \cL\!\left(\upl X_{\cdot \wedge t_n}\right), 
      \bar{\alpha}_n\right), \\
      \label{eq eq:an_construct:2}
    &\mathbb{E}\bigg[
    \frac{1}{h}
    \int_{t_n}^{t_{n+1}} 
    f\!\left(t_n, \upl X_{\cdot \wedge t_n}, 
      \cL\!\left(\upl X_{\cdot \wedge t_n}\right), 
      \alpha_t\right) \mathrm{d} t 
    \,\bigg\rvert\, \mathcal{F}_{t_n}\bigg]
    \ge 
    f\!\left(t_n, \upl X_{\cdot \wedge t_n}, 
      \cL\!\left(\upl X_{\cdot \wedge t_n}\right), 
      \bar{\alpha}_n\right).
\end{align}

\end{remark}

In the following remark, we provide two examples
where Assumption \ref{ass assump:Hstruct} is satisfied.
\begin{remark}\label{rmk:example_of_hstruct}
$(i)$ \emph{Linear convex problem.}
Assume $A$ is a convex subset of $\statespace$, $f$ is convex in $a$, and the drift $b$ is affine in $a$:
$
  b(t,y,\nu,a) =   b_1(t,y,\nu) +b_2(t,y,\nu) a.
$
Then it can be checked that Assumption~\ref{ass assump:Hstruct} holds.
 
$(ii)$ \emph{$b(t,x,\mu, A)$ is convex (for all $(t,x,\mu)$), and  $f$ does not depend of the control.} 
  Then $\begin{pmatrix} b \\ f\end{pmatrix} (t,x,\mu,A) $ is a convex set, 
  which implies Assumption~\ref{ass assump:Hstruct}.
\end{remark}

We are now ready to state the main theorem of this section, which establishes that the discretisation error is of order \(h^{1/4}\) between the value function $V$ for the continuous problem \eqref{eq:state_dynamics_dX} and $\bar V$ for the discrete problem with open-loop control \eqref{eq de euler scheme}.
\begin{theorem}\label{thm:V_minus_barV} 
Suppose that Assumptions  \ref{ass assum:L2_lip_b_sigma}, \ref{assum:terminal_cost_f_g_Phi}  and  \ref{ass assump:Hstruct} hold. Let $\mu \in \cP_{\infty,2}$.
 Then there exists a constant $C$ (independent of $\pi$) such that
 \begin{align}
      \left| V(0, \mu) - \bar V(0, \mathfrak{p}^\pi\sharp\mu) \right| \leq C h^{1/4}.  \label{eq:V_minus_barV}
 \end{align}
 \end{theorem}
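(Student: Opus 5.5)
We prove the two one-sided inequalities $\bar V(0,\mathfrak p^\pi\sharp\mu)\le V(0,\mu)+Ch^{1/4}$ and $V(0,\mu)\le \bar V(0,\mathfrak p^\pi\sharp\mu)+Ch^{1/4}$. Throughout, fix $\xi\in\mathbf S_2(\mathcal G)$ with $\cL(\xi)=\mu$. By Lemma~\ref{lemma:law_invariance} and Lemma~\ref{lemma:discrete_law_invariance}, we may then compare $V(0,\xi)$ with $\bar V(0,{}^\pi\xi)$, where ${}^\pi\xi=\mathfrak p^\pi(\xi)$.

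\textbf{Easy direction: $V\le \bar V+Ch^{1/4}$.} Take any $\bar\alpha\in\cA^\pi$ and embed it as the piecewise-constant control $\alpha_s:=\bar\alpha_{\n(s)}$, which lies in $\cA$. We compare the continuous state $\cX^{0,\xi,\alpha}$ with the Euler state $\bar\cX^{0,{}^\pi\xi,\bar\alpha}$ on the grid. Write the one-step error $e_k=\cX_{t_k}-\bar\cX_{t_k}$. Its recursion has the increments
\[
\int_{t_k}^{t_{k+1}}\big(b_r(\cX,\cL(\cX_{\cdot\wedge r}),\bar\alpha_k)-b(t_k,{}^\ell\bar\cX_{\cdot\wedge t_k},\dots,\bar\alpha_k)\big)\,\d r
\]
and the analogous $\sigma$ term. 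By Assumption~\ref{ass assum:L2_lip_b_sigma}, these are controlled by
\[
|r-t_k|^{1/4}+\|\cX_{\cdot\wedge r}-\cX_{\cdot\wedge t_k}\|_2+\|\cX_{\cdot\wedge t_k}-{}^\ell\cX_{\cdot\wedge t_k}\|_2+\|{}^\ell\cX_{\cdot\wedge t_k}-{}^\ell\bar\cX_{\cdot\wedge t_k}\|_2 ,
\]
and the same quantities in Wasserstein distance, which are bounded via \eqref{eq:wasserstein}. Proposition~\ref{prop:X_square_bound}(iii) and Lemma~\ref{le prop interpol}(ii) give squared errors of order $h$ for the middle terms. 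Lemma~\ref{le prop interpol}(i) turns the last term into $h\sum_{i\le k}|e_i|^2$. The same discrete Gronwall argument as in Step~1 of Lemma~\ref{lemma:continuity_of_barV} then gives $\max_k\sE|e_k|^2\le Ch^{1/2}$, where the $h^{1/4}$ time-H\"older term dominates. This bound is uniform in $\bar\alpha$.

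The costs are compared with Assumption~\ref{assum:terminal_cost_f_g_Phi}. Its local Lipschitz constants are handled by Cauchy--Schwarz together with the moment bounds \eqref{eq:bound_X_square} and \eqref{eq eq:bound_X_square_euler}. This gives $|J(0,\xi,\alpha)-\bar J(0,{}^\pi\xi,\bar\alpha)|\le Ch^{1/4}$ uniformly. Taking the infimum over $\bar\alpha$ yields $V\le\bar V+Ch^{1/4}$.

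\textbf{Hard direction: $\bar V\le V+Ch^{1/4}$.} Take an $\varepsilon$-optimal $\alpha\in\cA$. We build $\bar\alpha\in\cA^\pi$ recursively, which is where Assumption~\ref{ass assump:Hstruct} enters. Having built $\bar\alpha_0,\dots,\bar\alpha_{n-1}$ and hence the Euler state $\bar\cX$ up to $t_n$, apply Remark~\ref{rmk:construct_of_a}(ii) with $X=\bar\cX$. This gives an $\cF_{t_n}$-measurable $\bar\alpha_n$ such that the conditional average over $[t_n,t_{n+1}]$ of $b(t_n,{}^\ell\bar\cX_{\cdot\wedge t_n},\cL(\cdot),\alpha_t)$ equals the Euler drift, and the corresponding average of $f$ dominates the Euler running cost. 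A measurable selection is needed here; I would take it as granted in that remark.

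The analysis then compares $\cX^{0,\xi,\alpha}$ with $\bar\cX$. For each $t\in[t_n,t_{n+1}]$, freeze the drift coefficients at $(t_n,{}^\ell\bar\cX_{\cdot\wedge t_n})$ up to the errors estimated in the easy direction. This leaves the residual $\int_{t_n}^{t_{n+1}}\big(b(t_n,\bar y_n,\bar\mu_n,\alpha_t)-b(t_n,\bar y_n,\bar\mu_n,\bar\alpha_n)\big)\d t$, where $\bar y_n={}^\ell\bar\cX_{\cdot\wedge t_n}$ and $\bar\mu_n=\cL(\bar y_n)$. This residual is $\cF_{t_{n+1}}$-measurable, has zero $\cF_{t_n}$-conditional mean, and has size $O(h)$ by the linear growth of $b$.

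\textbf{Main obstacle.} The crux is handling these residuals in the error recursion, since a naive bound makes them sum to $O(1)$. Because they are martingale differences, I would expand $\sE|e_{k+1}|^2$. The cross terms between $e_k$ (which is $\cF_{t_k}$-measurable) and the residual vanish. The squared residuals sum to $N\cdot O(h^2)=O(h)$. One subtlety is that $e_k$ involves the continuous path at $t_k$ while the coefficients were frozen at $\bar\cX$; the conditioning still works because both are $\cF_{t_k}$-measurable.

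The outcome is $\max_k\sE|e_k|^2\le Ch^{1/2}$, and the costs are then handled exactly as in the easy direction. For the running cost, the inequality \eqref{eq eq:an_construct:2}, combined with the tower property, gives $\sE\big[h f(t_n,\bar y_n,\bar\mu_n,\bar\alpha_n)\big]\le\sE\int_{t_n}^{t_{n+1}} f(t_n,\bar y_n,\bar\mu_n,\alpha_t)\,\d t$. Replacing the frozen arguments by the continuous ones costs $Ch^{1/4}$. This yields $\bar J(0,{}^\pi\xi,\bar\alpha)\le J(0,\xi,\alpha)+Ch^{1/4}$, and letting $\varepsilon\to0$ concludes.
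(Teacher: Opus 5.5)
Your proof is correct and follows the paper's overall architecture. It proves two one-sided bounds. For $V\le\bar V+Ch^{1/4}$ it embeds $\bar\alpha$ as a piecewise-constant control (the paper's Step~2, resting on \eqref{eq:X_bar_minus_X2}). For the converse it uses the Filippov-type construction of Remark~\ref{rmk:construct_of_a}(ii), together with the fact that the residual has zero $\mathcal F_{t_n}$-conditional mean against an $\mathcal F_{t_n}$-measurable error.

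The hard direction is executed differently in two respects.

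\textbf{Anchor of the construction.} The paper builds $\bar\alpha$ from the interpolated \emph{continuous} state ${}^\ell\mathcal X_{\cdot\wedge t_n}$. This makes $\bar\alpha$ a non-recursive transform of $(\alpha,\mathcal X^{0,\xi,\alpha})$, but it requires an extra Lipschitz swap with $\bar\alpha_n$ frozen: the term \eqref{eq eq:bound_dXdb bis 4} and the last line of \eqref{eq:f_minus_f}. You build $\bar\alpha$ recursively from the Euler state. The frozen arguments then coincide with those of the Euler drift and running cost, so a single swap with $\alpha_t$ frozen suffices. This is legitimate because the Lipschitz constants are uniform in $a$.

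\textbf{Error recursion.} The paper applies It\^o's formula to $|\delta\mathcal X_t|^2$ along the continuous Euler interpolant \eqref{eq de euler scheme cont}. This forces the split $\delta\mathcal X_t=\delta\mathcal X_{t_n}+(\mathcal X_t-\mathcal X_{t_n})-(\bar{\mathcal X}_t-\bar{\mathcal X}_{t_n})$ and a crude $C\sqrt h$ bound on the intermediate-time cross terms in \eqref{eq eq:thrid0_dXdb}. Your grid-level expansion of $|e_k+D_k+R_k+S_k|^2$ pairs the residual only with $e_k$. So the residual contributes just $\sum_k\mathbb E|R_k|^2=O(h)$, and the noise increments of the two schemes stay combined inside $S_k$.

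\textbf{What each route buys.} Under the stated assumptions both routes give $\max_k\mathbb E|e_k|^2\le C\sqrt h$, since the $|t-t'|^{1/4}$ time regularity alone already produces that order. Your version is slightly more elementary: it needs no It\^o formula and no continuous interpolant. It also shows that the $h^{1/4}$ rate in the Filippov direction is dictated only by time regularity. By contrast, the paper's bound \eqref{eq eq:thrid0_dXdb}, as written, would keep that direction at order $h^{1/4}$ even under better time regularity. This is consistent with the paper's remark that only \eqref{eq:X_bar_minus_X2} improves.
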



The proof of Theorem \ref{thm:V_minus_barV}
depends on the following key lemma, which bounds the difference between 
$\cX$ and its Euler update $\bar \cX$.
\begin{lemma}
\label{le lemma:X_bar_minus_X}
Suppose that Assumptions \ref{ass assum:L2_lip_b_sigma} and \ref{ass assump:Hstruct} hold. Given an initial condition $\xi \in \mathbf{S}_2(\sF)$ and a control $\alpha \in \mathcal{A}$, 
  let $\bar\a\in\cA^\pi$ constructed as in   Remark \ref{rmk:construct_of_a}(ii) 
  Then, 
 \begin{equation} \label{eq:X_bar_minus_X}
    \sup _{t \in [0,T]} \mathbb{E}\left[\left|{}^\ell\!\bar{\mathcal{X}}_{t}^{ 0,{}^\pi\!\xi, \bar{\alpha}}-\mathcal{X}_{t}^{0,\xi, \alpha}\right|^2\right] \leq  C \sqrt{h},
\end{equation}
where $C$ does not depend on $\pi$.

On the other hand, given $\bar{\alpha} \in \cA^\pi$, we also denote by $\bar\alpha$ its piecewise-constant extension (observe that $\bar{\alpha} \in \cA$). There exists a constant $C$ independent of $\bar{\alpha}$ and $h$ such that
\begin{equation}\label{eq:X_bar_minus_X2}
    \sup_{t\in[0,T]} \sE\!\left[\left|\cX^{0,\xi,\bar\alpha}_t 
- \upl \bar{\cX}^{0,{}^\pi\!\xi,\bar\alpha}_t\right|^2\right]
\le C\, \sqrt{h}.
\end{equation}

\end{lemma}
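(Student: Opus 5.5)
The plan is a discrete Gronwall argument on the grid errors $e_k := \sE\big[|\bar\cX_{t_k} - \cX_{t_k}|^2\big]$, where $\bar\cX := \bar\cX^{0,{}^\pi\xi,\bar\alpha}$ and $\cX := \cX^{0,\xi,\alpha}$. Once $\max_k e_k \le C\sqrt h$ is established, we pass to arbitrary $t\in[t_k,t_{k+1}]$. On that interval ${}^\ell\bar\cX_t$ is a convex combination of $\bar\cX_{t_k}$ and $\bar\cX_{t_{k+1}}$. Hence
\[
\sE\big|{}^\ell\bar\cX_t-\cX_t\big|^2 \le 2\max_k e_k + C\,\sE\Big[\sup_{h'\le h}|\cX_{t_k+h'}-\cX_{t_k}|^2\Big],
\]
and the last term is $O(h)$ by Proposition~\ref{prop:X_square_bound}(iii).

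A preliminary point concerns the construction of $\bar\alpha$. It has to be done recursively: Remark~\ref{rmk:construct_of_a}(ii) is applied at step $n$ with $X = \bar\cX$. This is legitimate because ${}^\ell\bar\cX_{\cdot\wedge t_n}$ depends only on $\bar\alpha_0,\dots,\bar\alpha_{n-1}$ and is $\cF_{t_n}$-measurable.

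For the one-step expansion, write
\[
\cX_{t_{k+1}}-\cX_{t_k} = \int_{t_k}^{t_{k+1}} b\big(r,\cX,\sP_{\cX_{\cdot\wedge r}},\alpha_r\big)\,\d r + \int_{t_k}^{t_{k+1}}\sigma\,\d W_r .
\]
Subtracting the Euler step, the drift difference splits into two pieces. The first is
\[
R_k := \int_{t_k}^{t_{k+1}}\Big[b\big(r,\cX,\sP_{\cX_{\cdot\wedge r}},\alpha_r\big) - b\big(t_k,{}^\ell\bar\cX_{\cdot\wedge t_k},\cL({}^\ell\bar\cX_{\cdot\wedge t_k}),\alpha_r\big)\Big]\,\d r .
\]
The second is
\[
D_k := \int_{t_k}^{t_{k+1}}\Big[b\big(t_k,{}^\ell\bar\cX_{\cdot\wedge t_k},\cL(\cdot),\alpha_r\big) - b\big(t_k,{}^\ell\bar\cX_{\cdot\wedge t_k},\cL(\cdot),\bar\alpha_k\big)\Big]\,\d r .
\]

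The key observation concerns $D_k$. By \eqref{eq eq:an_construct:1} we have $\sE[D_k\,|\,\cF_{t_k}]=0$. Since $\bar\cX_{t_k}-\cX_{t_k}$ is $\cF_{t_k}$-measurable, the cross term $\sE[(\bar\cX_{t_k}-\cX_{t_k})^\top D_k]$ vanishes. Moreover, Assumption~\ref{ass assum:L2_lip_b_sigma}(i),(iii) give $|b(t,x,\mu,a)| \le C(1+\|x\|_2+\cWtwo(\mu,\delta_0))$ uniformly in $a$. Combined with Proposition~\ref{pr prop:moment_estimate_first_euler}(iii) this yields $\sE|D_k|^2\le Ch^2$. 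This martingale-difference structure is exactly what allows the open-loop $\alpha$ to be replaced by a constant control without losing the order, and I expect it to be the crux.

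For $R_k$, Assumption~\ref{ass assum:L2_lip_b_sigma}(i) bounds the integrand by
\[
C\Big(h^{1/4} + \|\cX_{\cdot\wedge r}-\cX_{\cdot\wedge t_k}\|_2 + \|\cX_{\cdot\wedge t_k}-{}^\ell\cX_{\cdot\wedge t_k}\|_2 + \|{}^\ell\cX_{\cdot\wedge t_k}-{}^\ell\bar\cX_{\cdot\wedge t_k}\|_2\Big),
\]
with the Wasserstein terms controlled by the same quantities through \eqref{eq:wasserstein}. The second and third terms are $O(h)$ in mean square, by Proposition~\ref{prop:X_square_bound}(iii) and Lemma~\ref{le prop interpol}(ii) respectively. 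The last term is bounded by $h\sum_{i\le k} e_i$ via Lemma~\ref{le prop interpol}(i), applied to the difference path. The diffusion difference is handled in the same way through Itô's isometry, giving a contribution of $Ch\big(h^{1/2}+h+h\sum_{i\le k}e_i\big)$.

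Expanding the square and using Young's inequality on the cross term with $R_k$, namely $2|\sE[e\cdot R_k]|\le h\,e_k + h^{-1}\sE|R_k|^2$, gives
\[
e_{k+1}\le (1+Ch)\,e_k + Ch^2\sum_{i\le k}e_i + Ch^{3/2}.
\]
The initial error is $e_0=0$ since $\bar\cX_{t_0}=\xi_0$. Summing over $k\le N$ contributes $N\cdot h^{3/2} = T h^{1/2}$, and the discrete Gronwall lemma then yields $\max_k e_k\le C\sqrt h$, which proves \eqref{eq:X_bar_minus_X}.

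For \eqref{eq:X_bar_minus_X2} the same argument applies with $\alpha=\bar\alpha$ piecewise constant, so that $D_k\equiv 0$. The dominant remaining error is still the $h^{1/4}$ time-Hölder term, which produces the same $\sqrt h$ bound with constants uniform in $\bar\alpha$. The main technical care will be in keeping all constants independent of $\alpha$ and $\pi$, which the uniform-in-$a$ moment bounds provide.
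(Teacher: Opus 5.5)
Your proof is correct. It reaches the same recursion as the paper, $e_{k+1}\le(1+Ch)e_k+Ch^2\sum_{i\le k}e_i+Ch^{3/2}$, but with different bookkeeping.

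\textbf{The main estimate.} The paper applies It\^o's formula to $|\cX_t-\bar\cX_t|^2$, where $\bar\cX$ is the continuous-time Euler interpolant \eqref{eq de euler scheme cont}. It then splits $\delta\cX_t=\delta\cX_{t_n}+(\cX_t-\cX_{t_n})-(\bar\cX_t-\bar\cX_{t_n})$ and bounds the increment-times-drift terms crudely by $C\sqrt h$ via Cauchy--Schwarz in \eqref{eq eq:thrid0_dXdb}. Your one-step expansion on the grid instead pairs every cross term with the $\cF_{t_k}$-measurable error. As a result, the stochastic integral and $D_k$ drop out exactly, and the $O(1)$ control mismatch enters only through $\sE|D_k|^2\le Ch^2$.

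What your route buys:
\begin{itemize}
\item It needs neither the auxiliary interpolant nor It\^o's formula.
\item Your passage from the grid to all $t$, via the convex combination, needs no increment estimate for the Euler scheme.
\item It shows that the $|t-t'|^{1/4}$ time regularity is the only source of the $h^{3/2}$-per-step error. Under $\beta$-H\"older time regularity ($\beta\le 1/2$) your argument would therefore give $h^{2\beta}$ for \eqref{eq:X_bar_minus_X} itself. The paper's \eqref{eq eq:thrid0_dXdb} is capped at $\sqrt h$, and its remark claims the improvement only for \eqref{eq:X_bar_minus_X2}.
\end{itemize}

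\textbf{The construction of $\bar\alpha$.} It does not have to be recursive. The paper applies Remark \ref{rmk:construct_of_a}(ii) with $X$ equal to the exact solution $\cX^{0,\xi,\alpha}$ sampled on $\pi$. It places the cancellation at ${}^\ell\cX_{\cdot\wedge t_n}$ (term \eqref{eq eq:bound_dXdb bis 3}). The price is the extra Lipschitz term \eqref{eq eq:bound_dXdb bis 4}, which compares ${}^\ell\cX$ and ${}^\ell\bar\cX$ at the frozen control $\bar\alpha_n$. Your grid argument handles that $\bar\alpha$ just as easily, by adding this term to $R_k$.

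The paper's choice matters downstream. It keeps $\bar\alpha$ independent of the scheme, and \eqref{eq:f_minus_f_part_a} in the proof of Theorem \ref{thm:V_minus_barV} relies on it. With your recursive choice, that step must be rerouted by splitting the running cost at $f(t_n,{}^\ell\bar\cX_{\cdot\wedge t_n},\cL({}^\ell\bar\cX_{\cdot\wedge t_n}),\alpha_t)$. This works equally well.

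\textbf{A minor imprecision.} For a stopped path, Lemma \ref{le prop interpol}(i) controls only $\int_0^{t_k}$. The frozen tail on $[t_k,T]$ adds $\frac{T-t_k}{T}e_k$, so the correct bound is $\sE\|{}^\ell\cX_{\cdot\wedge t_k}-{}^\ell\bar\cX_{\cdot\wedge t_k}\|_2^2\le C(h\sum_{i\le k}e_i+e_k)$. This term enters multiplied by $h$, so it is absorbed into $(1+Ch)e_k$ and your recursion stands.
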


\proof We first prove for \eqref{eq:X_bar_minus_X}.
Let $\xi\in\bS_2(\sF)$ and  $\alpha \in \cA$ .
Let $\bar{\alpha} \in \cA^{\pi}$ be the discrete-time control constructed from $\alpha$ according to Remark \ref{rmk:construct_of_a}(ii). 

To prove Lemma~\ref{le lemma:X_bar_minus_X}, we introduce an auxiliary continuous process that coincides with the discrete process $\{ \bar\cX_{t_k}^{t_n, \xi, \bar \a}\}_{k\in[N]}$ in \eqref{eq de euler scheme} on $\pi$ and is $\cF_s$-measurable for any $s \geq t_n$, for a given initial time $t_n$:
for $t \in [t_k, t_{k+1}]$, we set 
    \begin{align}\label{eq de euler scheme cont}
    \bar{\mathcal{X}}^{t_n,\xi,\bar{\alpha}}_{t}
   & = \bar{\mathcal{X}}^{t_n,\xi,\bar{\alpha}}_{t_{k}}
    + b\!\left(t_k,{}^\ell\!\bar{\mathcal{X}}^{t_n,\xi,\bar{\alpha}}_{\cdot \wedge t_k},
      \cL\!\left({}^\ell\!\bar{\mathcal{X}}^{t_n,\xi,\bar{\alpha}}_{\cdot\wedge t_k}\right),\bar{\alpha}_{t_k}\right) (t-t_k)
    \nonumber\\
    &\quad + \sigma \!\left(t_k,{}^\ell\!\bar{\mathcal{X}}^{t_n,\xi,\bar{\alpha}}_{\cdot\wedge t_k},
      \cL\!\left({}^\ell\!\bar{\mathcal{X}}^{t_n,\xi,\bar{\alpha}}_{\cdot\wedge t_k}\right)\right) (W_t - W_{t_k}).
\end{align}
Then by Proposition \ref{prop:X_square_bound}(iii), $(\bar \cX_t^{t_n,\xi,\bar{\alpha}} )_{t\geq t_n}$ satisfies the following property:
\begin{equation} \label{eq:cont_euler_Xt_minus_Xn}
    \max_{k \geq n}\sup_{t\in[t_k,t_{k+1}]}\sE \!\left[\left|  \bar \cX_t^{t_n,\xi,\bar{\alpha}} - \bar \cX_{t_k}^{t_n,\xi,\bar{\alpha}} \right|^2\right]
    \leq C h.
\end{equation}

Let $\cX\coloneqq \mathcal{X}^{0,\xi,\alpha}$ be the solution to \eqref{eq:state_dynamics_dX}, and $\bar \cX \coloneqq \bar{\mathcal{X}}^{0,{}^\pi\xi,\bar{\alpha}}$ the continuous Euler scheme defined in \eqref{eq de euler scheme cont}. We also denote $\delta \mathcal{X}_t=\mathcal{X}_t-\bar{\mathcal{X}}_t$,
$ 
\delta b_t=
b\left(t, \mathcal{X},\cL(\mathcal{X}_{\cdot \wedge t}), \alpha_t\right)
-
b\left( \mathfrak{n}(t), \les
,\cL\left(\les_{\cdot \wedge \mathfrak{n}(t)} 
\right), \bar{\alpha}_{\mathfrak{n}(t)}\right)
$, 
$\delta\sigma_t = \sigma\left(t, \mathcal{X}, \cL\left(\mathcal{X}_{\cdot \wedge t}\right)\right)
-
\sigma\left(\mathfrak{n}(t), \les,\cL\left(\les_{\cdot \wedge \mathfrak{n}(t)}\right)\right).$ 

%

First,  note that for any $t\in[t_n, t_{n+1}],$
\begin{align*}
    \upl \bar\cX_t - \cX_t = (\upl \bar \cX_t -\bar\cX_{t_n}  ) + (\bar\cX_{t_n} - \cX_{t_n}) + (\cX_{t_n} - \cX_t),
\end{align*}
where by Propositions \ref{prop:X_square_bound} and \ref{pr prop:moment_estimate_first_euler}, we have
\begin{equation}\label{eq:decomposition_in_X_minus_X}
  \sE\left[\left|  \upl \bar\cX_t - \cX_t\right|^2\right] \leq  C h +     \max_{0 \le n \le N} \esp{\left|\bar{\mathcal{X}}_{t_n}^{ 0,{}^\pi\!\xi, \bar{\alpha}}-\mathcal{X}_{t_n}^{0,\xi, \alpha}\right|^2}.  
\end{equation}
Therefore, it remains to prove that
\begin{align}\label{eq:discrete_barX_minus_X}
    \max_{0 \le n \le N} \esp{\left|\bar{\mathcal{X}}_{t_n}^{ 0,{}^\pi\!\xi, \bar{\alpha}}-\mathcal{X}_{t_n}^{0,\xi, \alpha}\right|^2} \le C \sqrt{h}.
\end{align}

By applying Ito's formula to $|\delta \cX_t|^2$ and
taking expectation on both sides
we obtain, 
\begin{align}\label{eq starting point}
    \sE\left[\left|\delta \mathcal{X}_{t_{n+1}}\right|^2\right]
    &= \esp{\left|\delta \mathcal{X}_{t_n}\right|^2
   +2  \int_{t_n}^{t_{n+1}} \delta \mathcal{X}_t^{\top} \delta b_t \mathrm{d} t
   + \int_{t_n}^{t_{n+1}}\left\| \delta \sigma_t \right\|_F^2 \mathrm{d} t  }.
\end{align}

\paragraph{Step 1. Bound $\delta \cX_t^\top \delta b_t$}
To analyse $\delta \cX_t^\top \delta b_t$ in \eqref{eq starting point},
we observe that for any $t\in [t_n, t_{n+1})$, 
\begin{align}
\delta \mathcal{X}_t^\top \delta b_t  & =\left(   \cX_t - \cX_{t_n} + \bar \cX_{t_n} - \bar \cX_t + \delta \cX_{t_n}\right)^\top \delta b_t
\nonumber
\\
&=  \left(\mathcal{X}_{t}-\mathcal{X}_{t_n}\right)^\top \delta b_t+\left( \bar \cX_{t_n} -\bar \cX_{t}\right)^\top \delta b_t 
\label{eq eq:bound_dXdb bis 1}
\\
& +\delta \mathcal{X}_{t_n}^\top 
  \left( 
   b\big(t,   \mathcal{X}, \cL\left(\mathcal{X}_{\cdot \wedge t}\right), \alpha_t\big)
  -b\big(t_n, \lx_{\cdot \wedge t_n}, \cL( \lx_{\cdot \wedge t_n}), \alpha_t \big)
  \right) 
\label{eq eq:bound_dXdb bis 2}
\\
&
 + \delta \mathcal{X}_{t_n}^\top 
 \left(b\big( t_n, \lx, \cL( \lx_{\cdot \wedge t_n}), \alpha_t\big)
      -b\big( t_n, \lx_{\cdot \wedge t_n},\cL(\lx_{\cdot \wedge t_n}), \bar{\alpha}_n\big)
 \right)
 \label{eq eq:bound_dXdb bis 3}
 \\
&
 + \delta \mathcal{X}_{t_n}^\top 
 \left( b\big( t_n, \lx_{\cdot \wedge t_n}, \cL( \lx_{\cdot \wedge t_n}), \bar{\alpha}_n\big)
       -b\big( t_n, \les,\cL(\les_{\cdot \wedge t_n}), \bar{\alpha}_n\big)
 \right).
 \label{eq eq:bound_dXdb bis 4}
\end{align} 

 We first analyse the  term in \eqref{eq eq:bound_dXdb bis 1}. By Assumption \ref{ass assum:L2_lip_b_sigma}(iii), we have 
\begin{equation}\label{eq eq:thrid0_dXdb}
\begin{aligned}
   &\sE\left[ \left|\left(\mathcal{X}_t-\mathcal{X}_{t_n}\right)^\top \delta b_t\right|+\left|\left(\bar \cX_t-\bar \cX_{t_n}\right)^\top \delta b_t\right| \right]
\\
& \leq C \sup_{t\in [t_n,t_{n+1}]}\sE \Big[
\big(|\cX_t - \cX_{t_n}| + | \bar \cX_t - \bar \cX_{t_n}|\big)
\big(1 + \| \cX_{\cdot \wedge t}\|_\infty +  \| \les_{\cdot \wedge t_n} \|_\infty \\
&\qquad\qquad\qquad\qquad\qquad\qquad
  + \sE\left[  \| \cX_{\cdot \wedge t}\|_\infty \right]
  + \sE \left[ \| \les_{\cdot \wedge t_n} \|_\infty \right]
\big) \Big] \\
& \leq C \sup_{t\in [t_n,t_{n+1}]}\sE \Big[|\cX_t - \cX_{t_n}|^2
+ | \bar \cX_t - \bar \cX_{t_n}|^2 \Big]^{\frac{1}{2}}
\Big(1 + \sE \Big[\| \cX_{\cdot \wedge t}\|_\infty ^2 \Big]^{\frac{1}{2}}
+ \sE\Big[ \| \les_{\cdot \wedge t_n} \|_\infty^2\Big] ^{\frac{1}{2}}
\Big) \\
&\leq C \sqrt{h},
\end{aligned}
\end{equation}
where the second inequality follows from Cauchy–Schwarz inequality  and the last inequality is a result of
Proposition \ref{prop:X_square_bound} and \eqref{eq:cont_euler_Xt_minus_Xn}. 

We now analyse the term in \eqref{eq eq:bound_dXdb bis 2}. By Young's inequality,
\begin{align}
    &\esp{\delta \mathcal{X}_{t_n}^\top 
      \left(b\big(t, \mathcal{X}, \cL(\mathcal{X}_{\cdot \wedge t}), \alpha_t\big)
           -b\big(t_n, \lx_{\cdot \wedge t_n}, \cL( \lx_{\cdot \wedge t_n}), \alpha_t\big)\right)} \nonumber \\
    & \qquad \le \ 
      C \,\esp{
       |\delta \mathcal{X}_{t_n}|^2 + 
       \left|
       b\big(t, \mathcal{X}, \cL(\mathcal{X}_{\cdot \wedge t}), \alpha_t\big)
      -b\big(t_n, \lx_{\cdot \wedge t_n}, \cL( \lx_{\cdot \wedge t_n}), \alpha_t\big)\right|^2}.
\end{align}
Note that, by Assumption \ref{ass assum:L2_lip_b_sigma}(i),
\begin{align}
    & \esp{  \left|b\big(t, \mathcal{X}, \cL(\mathcal{X}_{\cdot \wedge t}), \alpha_t\big)
                  -b\big(t_n, \lx_{\cdot \wedge t_n}, \cL( \lx_{\cdot \wedge t_n}), \alpha_t\big)\right|^2} \nonumber \\
    & \quad \le \ C\bigg( (t-t_n)^{\frac{1}{2}} + \sE\left[ \left\| \cX_{\cdot \wedge t_n} - \lx_{\cdot \wedge t_n}\right\|_{2}^2 \right] 
+ \cWtwo\left(\cL(\cX_{\cdot \wedge t}), \cL(\lx_{\cdot \wedge t_n})\right)^2 \bigg) \nonumber
\\
&   \quad \le C\bigg(h^{\frac{1}{2}} +
    \sE \left[ \left\| \cX_{\cdot \wedge {t}} - \cX_{\cdot \wedge t_n}\right\|_{2}^2 \right] +
    \sE\left[ \left\| \cX_{\cdot \wedge {t_n}} - \lx_{\cdot \wedge t_n}\right\|_{2}^2 \right]
    \bigg) \le C h^\frac{1}{2},
\end{align}
where the last line follows from \eqref{eq:wasserstein}, Lemma~\ref{le prop interpol}\textup{(ii)}, and Proposition~\ref{prop:X_square_bound}. 

For the term in \eqref{eq eq:bound_dXdb bis 4}, we use Young's inequality and Assumption \ref{ass assum:L2_lip_b_sigma}(i) to get 
\begin{align*}
    & \esp{\delta \mathcal{X}_{t_n}^\top 
     \left( b\big(t_n, \lx_{\cdot \wedge t_n}, \cL( \lx_{\cdot \wedge t_n}), \bar{\alpha}_n\big)
           -b\big(t_n, \les,\cL(\les_{\cdot \wedge t_n}), \bar{\alpha}_n\big) \right)}
    \\
    & \quad\le \ C\,\esp{|\delta \mathcal{X}_{t_n}|^2 +
     \left|  b\big(t_n, \lx_{\cdot \wedge t_n}, \cL( \lx_{\cdot \wedge t_n}), \bar{\alpha}_n\big)
            -b\big(t_n, \les,\cL(\les_{\cdot \wedge t_n}), \bar{\alpha}_n\big) \right|^2 
    } \\
    & \quad \leq  \ C \,\esp{|\delta \mathcal{X}_{t_n}|^2} + C \sE \left[ \left\| 
{}^\ell\!\mathcal{X}_{\cdot\wedge t_n}
- \les_{\cdot\wedge t_n} \right\|_2^2 
\right]\leq 
    C \esp{|\delta \mathcal{X}_{t_n}|^2} +  C h\sum_{k=0}^n \sE \left[|\delta\mathcal{X}_{t_k}|^2\right],
\end{align*}
where the last inequality follows from Lemma~\ref{le prop interpol}\textup{(i)}.

Finally, by denoting
$ I_n = \int_{t_n}^{t_{n+1}}\delta \mathcal{X}_{t_n}^\top 
  ( b\big(t_n, \lx, \cL( \lx_{\cdot \wedge t_n}), \alpha_t\big)
        -b\big(t_n, \lx_{\cdot \wedge t_n},\cL(\lx_{\cdot \wedge t_n}), \bar{\alpha}_n\big)) \d t $, and
from the construction of $\bar{\alpha}_n$, we have 
   $ \sE[I_n \mid \cF_{t_n}] = 0, $ almost surely.
We thus get 
\begin{align}\label{eq eq:dXdb_result}
   \sE \left[ \int_{t_n}^{t_{n+1}}  \delta \cX_t^\top \delta b_t \d t\right]
        &\leq  
           Ch\left( h^\frac12 +  \sE[|\delta \cX_{t_n}|^2] +
           h\sum_{k=0}^n \sE \left[|\delta\mathcal{X}_{t_k}|^2\right]
\right) .
\end{align}

\paragraph{Step 2. Bound $\|\delta \sigma_t\|_F$}
Following similar procedure as in Step 1, we have
\begin{align}
      \esp{ \left\|\delta \sigma_t \right\|_F^2}
& \leq C  \left( h^\frac12 +  \sE[|\delta \cX_{t_n}|^2]  + h\sum_{k=0}^n \esp{|\delta\mathcal{X}_{t_k}|^2}\right) .\label{eq control sigma}
\end{align}

\paragraph{Step 3. Conclusion} We obtain, combining \eqref{eq control sigma} and \eqref{eq eq:dXdb_result} with \eqref{eq starting point},
 \begin{align}
          \sE\left[ \left|\delta \mathcal{X}_{t_{n+1}}\right|^2  \right]\leq &  (1+Ch)\sE\left[\left|\delta \mathcal{X}_{ t_n}\right|^2 \right]+   Ch \left(\sqrt{h} +h\sum_{k=0}^n \esp{|\delta\mathcal{X}_{t_k}|^2}  \right) .
    \end{align}
Denoting $y_n = \max_{0 \le k \le n}\esp{|\delta\mathcal{X}_{t_k}|^2} $, we deduce from the previous inequality
$    y_{n+1} \le (1+Ch)y_n + Ch^\frac32.$ By Gronwall's inequality and the fact that $y_0=0$, this yields $y_N \le Ch^\frac12$. By combining with \eqref{eq:decomposition_in_X_minus_X}, it concludes the proof of \eqref{eq:X_bar_minus_X}.

The proof of \eqref{eq:X_bar_minus_X2} follows from similar (in fact easier) arguments as used in the proof of \eqref{eq:X_bar_minus_X}, hence it is omitted here.
\eproof

\begin{remark}
        The bound in \eqref{eq:X_bar_minus_X2} in Lemma \ref{le lemma:X_bar_minus_X} can be improved under stronger time regularity of the drift in Assumption \ref{ass assum:L2_lip_b_sigma}. In particular, if
\(
|b(t,\cdot)-b(s,\cdot)| \le C |t-s|^{\beta}
\)
for $\beta \in \Big[\tfrac14,\tfrac12\Big],$
then by following arguments similar to those in the proof of Lemma \ref{le lemma:X_bar_minus_X}, we obtain
\(
\sup_{t\in[0,T]} \sE\!\left[\left|\cX^{0,\xi,\alpha}_t 
- \bar{\cX}^{0,{}^\pi\!\xi,\bar\alpha}_t\right|^2\right]
\le C\, h^{2\beta}.
\)
In this paper, we work under the weakest  assumption with
\(\beta = \tfrac14\), since smaller values of \(\beta\) do not lead to any
improvement in the final bound of Theorem \ref{thm:V_minus_barV}.
    \end{remark}

We are now ready to present the proof of Theorem \ref{thm:V_minus_barV}, based on the bound between $\cX$ and its Euler update $\bar \cX$ using Lemmas \ref{le lemma:X_bar_minus_X} as well as the continuity of cost functions $f, g$, and the convexity assumptions imposed in Assumption \ref{ass assump:Hstruct}.

\paragraph{Proof of Theorem \ref{thm:V_minus_barV}}
To prove \eqref{eq:V_minus_barV} , we   prove the following two inequalities.
\paragraph{Step 1} We first prove that
   $ \bar V(0,\mu) \le V(0,\mu) +Ch^{\frac14},
   $
for some constant $C > 0$.

Let $\xi\in\bS_2(\sF).$
Fix $\vep>0$ and choose $\a$ with $J\left(0,\mu,\a\right) \leq V(0,\mu)+\vep$. 
Let $\bar{\alpha} \in \cA^{\pi}$ be the discrete-time control constructed from $\alpha$ according to Remark \ref{rmk:construct_of_a}(ii). 
Let $\cX\coloneqq \mathcal{X}^{0,\xi,\alpha}$ be the solution to \eqref{eq:state_dynamics_dX}, and $\bar \cX \coloneqq \bar{\mathcal{X}}^{0,{}^\pi\xi,\bar{\alpha}}$ the Euler scheme \eqref{eq de euler scheme}.

By \eqref{eq:objective_J} and \eqref{eq objective_bar_J}, we have
\begin{align}
    J(0,\xi,\alpha) - \bar J(0, {}^\pi\!\xi,\bar \alpha)
           &= \sum_{n=0}^{N-1}  \sE\!\left[ \int_{t_n}^{t_{n+1}}
           \Delta f_n(t) ~\d t \right]
           + \sE\left[  \Terminal(\cX, \cL(\cX)) - \Terminal( \les, \cL( \les))\right],
           \label{eq terminal cost}
\end{align}
where $\Delta f_n(t) \coloneqq f \left(t, \cX_{\cdot \wedge t}, \cL(\cX_{\cdot \wedge t}), \a_t \right)
           - f(t_n,   \les_{\cdot \wedge t_n}, \cL(\les_{\cdot \wedge t_n}), \bar \a_{n})$.
Observe that, for $t \in [t_n, t_{n+1}],$
\begin{equation}\label{eq:f_minus_f}
    \begin{aligned}
   &f \left(t, \cX_{\cdot \wedge t}, \cL(\cX_{\cdot \wedge t}), \a_t \right)  - f(t_n,   \les_{\cdot \wedge t_n}, \cL(\les_{\cdot \wedge t_n}), \bar \a_n)\\
   &
   = f \left(t, \cX_{\cdot \wedge t}, \cL(\cX_{\cdot \wedge t}), \a_t \right)  - f\left(t_n, {}^\ell\!{\cX}_{\cdot \wedge t_n}, \cL({}^\ell\!\cX_{\cdot \wedge t_n}), \a_t \right)
   \\
   &\qquad + f\left(t_n, {}^\ell\!{\cX}_{\cdot \wedge t_n}, \cL({}^\ell\!\cX_{\cdot \wedge t_n}), \a_t \right) - f\left(t_n, {}^\ell\!{\cX}_{\cdot \wedge t_n}, \cL({}^\ell\!\cX_{\cdot \wedge t_n}), \bar\a_{n} \right)
\\ 
   &\qquad +f\left(t_n, {}^\ell\! {\cX}_{\cdot \wedge t_n}, \cL({}^\ell\! \cX_{\cdot \wedge t_n}), \bar\a_{n} \right)
   -f\left(t_n, {}^\ell\!\bar{\cX}_{\cdot \wedge t_n}, \cL({}^\ell\!\cX_{\cdot \wedge t_n}), \bar{\alpha}_{n} \right)
\end{aligned}
\end{equation}
First, note that by construction of $\bar \a$ in Remark \ref{rmk:construct_of_a}(ii),
\begin{equation}\label{eq:f_minus_f_part_a}
    \esp{\int_{t_n}^{t_{n+1}}f\left(t_n, {}^\ell\! {\cX}_{\cdot \wedge t_n}, \cL({}^\ell\! \cX_{\cdot \wedge t_n}), \a_t \right)
   -f\left(t_n, {}^\ell\! {\cX}_{\cdot \wedge t_n}, \cL({}^\ell\!\cX_{\cdot \wedge t_n}), \bar{\alpha}_{n} \right) \d t} \ge 0.
\end{equation}
   To analyse the other two terms in \eqref{eq:f_minus_f}, note that
by Assumption \ref{assum:terminal_cost_f_g_Phi}, we have
\begin{align}  
   & \esp{
   \left|f \left(t, \cX_{\cdot \wedge t}, \cL(\cX_{\cdot \wedge t}), \a_t \right)  - f\left(t_n, {}^\ell\! {\cX}_{\cdot \wedge t_n}, \cL({}^\ell\! \cX_{\cdot \wedge t_n}), \a_t \right)\right| 
   } \nonumber
  \\
  &\leq
 \sE \Big[ \left(1 +\|\cX_{\cdot \wedge t}\|_{\infty} + \|\upl \cX_{\cdot \wedge t_n}\|_{\infty} + \cWinf( \cL(\cX_{\cdot \wedge t}), \delta_0) + \cWinf(\cL(\upl \cX_{\cdot \wedge t_n}), \delta_0)  \right) \nonumber\\
 &\qquad \qquad 
  \cdot \left(|t-t_n|^\frac14+ \|\cX_{\cdot\wedge t_n} - {}^\ell\! \cX_{\cdot\wedge t_n} \|_{2} + \cWtwo( \cL(\cX), \cL({}^\ell\! \cX)) \right) \Big]
  \nonumber
  \\
 &\leq C \left(1+\sE \left[\|\cX\|_\infty^2\right]^{\frac{1}{2}} +\sE \left[\|{}^\ell\! \cX\|_\infty^2\right]^{\frac{1}{2}} \right) \notag \\
 &\qquad \cdot \left( h^\frac14  + \sE\left[\|\cX_{\cdot \wedge t_n} -\upl \cX _{\cdot \wedge t_n} \|_{2}^2\right]^\frac{1}{2} + \sE\left[\|\cX_{\cdot \wedge t} - \cX _{\cdot \wedge t_n} \|_{2}^2\right]^\frac{1}{2} \right)
 \nonumber
 \\
 &\overset{(a)}{\leq} C \sE\left[\|\cX _{\cdot \wedge t_n}-\upl \cX _{\cdot \wedge t_n}\|_{2}^2\right]^\frac{1}{2} + Ch^\frac14 
\overset{(b)}{\leq} C {h}^\frac{1}{4}, \label{eq long computation f}
\end{align}
where $(a)$ follows from
Propositions \ref{prop:X_square_bound} and  the fact that $\| \upl \cX\|_\infty \leq \| \cX\|_\infty $, and $(b)$ is a result of Lemma \ref{le prop interpol}.  
We similarly compute 
\begin{align}
   &\esp{
   \left|f\left(t_n, {}^\ell\!{\cX}_{\cdot \wedge t_n}, \cL({}^\ell\!\cX_{\cdot \wedge t_n}), \bar\a_{n} \right)  - f\left(t_n, {}^\ell\!\bar{\cX}_{\cdot \wedge t_n}, \cL({}^\ell\!\bar\cX_{\cdot \wedge t_n}), \bar\a_{n}\right)\right| 
   }   \notag \\
   &\le C  
    \sE \left[  \left\|  \upl\cX_{\cdot \wedge t_n}- \upl\bar \cX_{\cdot \wedge t_n} \right\|_2^2 \right]^\frac12
 \le C\sup_{s\leq T} \sE\left[\left|{}^\ell\!\cX_s - {}^\ell\! \bar\cX_s \right|^2\right]^{\frac{1}{2}} \le Ch^\frac14,
   \label{eq long computation f 2}
\end{align}
where the last inequality follows from Lemma \ref{le lemma:X_bar_minus_X}. 
By applying \eqref{eq:f_minus_f_part_a}, \eqref{eq long computation f}, and \eqref{eq long computation f 2} to \eqref{eq:f_minus_f}, we have
\begin{align}\label{eq majo f final}
    \esp{\int_{t_n}^{t_{n+1}} f \left(t, \cX_{\cdot \wedge t}, \cL(\cX_{\cdot \wedge t}), \a_t \right)  - f(t_n,   \les_{\cdot \wedge t_n}, \cL(\les_{\cdot \wedge t_n}), \bar \a _n)~ \d t}
\ge - Ch^{\frac14} h.
\end{align}

For the terminal cost term $g$ in \eqref{eq terminal cost},  we compute by Assumption \ref{assum:terminal_cost_f_g_Phi},  Proposition \ref{prop:X_square_bound}, Proposition \ref{pr prop:moment_estimate_first_euler}, and Lemma \ref{le lemma:X_bar_minus_X},
\begin{align}
   & \left| \sE\left[  \Terminal(\cX, \cL(\cX)) - \Terminal(    \les, \cL(  \les))\right] \right|  \notag \\
    &\leq C \sE \Big[ \left(1 +\|\cX\|_{\infty} + \| \les\|_{\infty} + \cWinf( \cL(\cX), \delta_0) + \cWinf(\cL( \les), \delta_0)  \right)\notag \\
    &\qquad\qquad\cdot \left( \|\cX -  \les\|_{2} + \cWtwo( \cL(\cX), \cL( \les)) \right) \Big] \leq C h^{\frac{1}{4}}. \label{eq:J_minus_barJ_2}
\end{align}
Inserting \eqref{eq majo f final} and \eqref{eq:J_minus_barJ_2} back into \eqref{eq terminal cost}, we obtain 
$     \bar J(0, {}^\pi\!\xi,\bar \alpha) \le  J(0,\xi,\alpha) + Ch^{\frac14} $
which leads to $
    \bar J(0, {}^\pi\!\xi,\bar \alpha) \le V(0,\mu) + \epsilon +Ch^{\frac14}. $
We conclude the proof of this step by letting $\epsilon \rightarrow 0$.

\paragraph{Step 2} We now show the converse inequality:
    $V(0,\mu)  \le \bar V(0,\mu)  +Ch^{\frac14}.$

Let $\bar \a \in \cA^\pi$ such that $\bar J(0,\mu,\bar\a) \leq \bar V(0,\mu) +\vep.$  
 We also denote by $\bar\alpha$ its piecewise-constant extension (observe that $\bar{\alpha} \in \cA$).
We then consider the control dynamics $\cX:=\cX^{0,\xi,\bar{\alpha}}$ and the associated controlled Euler scheme $\es:=\bar\cX^{0,{}^\pi\!\xi,\bar{\alpha}}$.
Then, using similar arguments as in \eqref{eq long computation f} and \eqref{eq:J_minus_barJ_2} using \eqref{eq:X_bar_minus_X2}, we obtain
 $  |J(0,\xi,\bar\alpha) - \bar J(0, {}^\pi\!\xi,\bar \alpha)| \le Ch^{\frac14}. $
This leads to 
\(V(0,\mu) \le  \bar J(0, {}^\pi\!\xi,\bar \alpha) + Ch^\frac14,
\)
and from the choice of $\bar\alpha$ to 
\(
V(0,\mu) \le \bar{V}(0,\mu)+\epsilon+Ch^\frac14.
\)
The proof for this step is concluded by letting $\epsilon$ goes to $0$. \eproof

\section{Feedback Controls in Paths}\label{sec:b_feedback}

In this section, we introduce two classes of feedback controls associated with the Euler scheme \eqref{eq de euler scheme}. The first class consists of feedback controls depending on the state trajectory of the Euler scheme itself. This formulation is natural from the dynamic programming point of view and, by Theorem \ref{thm:same-value-on-fdbck-ctrl}, yields the same value function as the discrete open-loop formulation \eqref{eq barV}. 
The second class consists of feedback controls depending only on the initial condition and on the Brownian increments $(\Delta W_k)_k$. 
We shall show that this class of controls also leads to the same value function in Theorem \ref{thm:same-value-on-fdbck-ctrl}.

\paragraph{Feedback controls in the state process $\bar{\cX}$}

We define a feedback control as a sequence
$\psi^X=(\psi^X_k)_{0\le k\le N-1}$, where, for each
$0\le k\le N-1$,
\(
\psi^X_k:(\R^d)^{k+1}\to A
\)
is a measurable map. The collection of such sequences is denoted by
$\cM^X$.
For $\xi\in\mathbf S_2(\mathbb F^\pi)$ and
$\psi^X\in\cM^X$, we associate the control process
$\bar\alpha^{\psi^X}\in\cA^\pi$ defined by
\begin{align}\label{eq def bar alpha X}
\bar\alpha^{\psi^X}_{k}
:=
\psi^X_k (
\bar{\mathcal X}^{t_n,\xi,X}_{\cdot\wedge t_k}
),
\qquad 0\le k\le N-1.
\end{align}
With a slight abuse of notation, we denote   
$\bar \a^X $ the control $\bar \a^{\psi^X}$, and
$\bar{\mathcal X}^{t_n,\xi,X}$
the Euler scheme
$\bar{\mathcal X}^{t_n,\xi,\bar\alpha^X}$
defined in \eqref{eq de euler scheme}. 
Note that $\bar\alpha^{\psi^X}$ in \eqref{eq def bar alpha X} is defined inductively: for each $k$, $\bar{\mathcal X}^{t_n,\xi,X}_{\cdot\wedge t_k}$ depends only on the values of $\bar\alpha^{\psi^X}$ up to step $k-1$.

\paragraph{Feedback controls in the Brownian increments $(\Delta W_k)_k$}

We next introduce a second class of feedback controls, where the control at time $t_k$ depends on the initial condition together with the Brownian increments $(\Delta W_1,\ldots,\Delta W_k)$. 
Let $\cM^W$ denote the set of sequences
$\psi^W=(\psi^W_k)_{0\le k\le N-1}$ of measurable maps
\(
\psi^W_k:\R^d\times(\R^{\bmdim})^k\to A.
\)
For any $\psi^W\in\cM^W$, we define the associated control process
$\bar\alpha^W\in\cA^\pi$ by
\begin{align}\label{eq de beta}
    \bar\alpha^{\psi^W}_{k}
    =
    \psi^W_k(\xi_0,\Delta W_1,\ldots,\Delta W_k),
    \qquad 0\le k\le N-1,
\end{align}
for any $\xi\in\bS_2(\sF^\pi)$.
With a slight abuse of notation,  we denote   
$\bar \a^W$ the control $\bar \a^{\psi^W}$. For $\psi^W\in\cM^W$, we write
\(
\bar{\cX}^{0,\xi,W}
\) or \(
\bar{\cX}^{0,\xi,\bar\alpha^W}
\)
interchangeably for the Euler scheme defined in
\eqref{eq de euler scheme}, where the control process is given by
\eqref{eq de beta}.

We then define, for any $\xi\in\mathbf S_2(\mathbb F^\pi)$,
\begin{align}\label{eq def fdbck ctrl increments}
    \widehat V_0(\xi)
    :=
    \inf_{\psi^W\in\cM^W}
    \bar J\bigl(0,\xi,\bar\alpha^W\bigr),
\end{align}
where $\bar J$ is defined in \eqref{eq objective_bar_J}.

The following proposition shows that the optimal value obtained over the two classes of feedback controls coincides with the value function $\bar V$ in \eqref{eq barV}, defined over the discrete open-loop control set $\cA^\pi$.

\begin{theorem}\label{thm:same-value-on-fdbck-ctrl}
Let $\bar V$ be defined as in \eqref{eq barV}. Then, for any
$\xi\in\bS_2(\sF^\pi)$,
\begin{align}\label{eq same value with feedback control}
    \bar V(0,\xi)
    =
    \inf_{\psi^X\in\cM^X}
    \bar J(0,\xi,\bar\alpha^X)
    =
    \inf_{\psi^W\in\cM^W}
    \bar J(0,\xi,\bar\alpha^W).
\end{align}
\end{theorem}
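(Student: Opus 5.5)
The two inequalities ``$\le$'' are immediate, since both feedback classes generate admissible open-loop controls. For $\psi^X\in\cM^X$, the control $\bar\alpha^X_k=\psi^X_k(\bar\cX^{0,\xi,X}_{\cdot\wedge t_k})$ is $\cF_{t_k}$-measurable, because the Euler state up to $t_k$ is (by induction on $k$ using \eqref{eq de euler scheme}) a measurable function of $\xi$ and of $\Delta W_0,\dots,\Delta W_{k-1}$. For $\psi^W\in\cM^W$, the control $\bar\alpha^W_k$ is clearly $\cF_{t_k}$-measurable. Hence both controls lie in $\cA^\pi$, and
\[
\bar V(0,\xi)\le \inf_{\psi^X\in\cM^X}\bar J(0,\xi,\bar\alpha^X)
\quad\text{and}\quad
\bar V(0,\xi)\le \inf_{\psi^W\in\cM^W}\bar J(0,\xi,\bar\alpha^W).
\]
(The index conventions for the increments have to be aligned with the paper's: $\Delta W_k$ must not be used at time $t_k$.)

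For the reverse inequalities I would use the dynamic programming principle, Lemma~\ref{lemma:DPP}, together with the law invariance of Lemma~\ref{lemma:discrete_law_invariance}, and construct an $\vep$-optimal feedback control step by step. The key observation is as follows. Fix $n$ and a state $X=\bar\cX_{\cdot\wedge t_n}$ that is $\sigma(\xi,\Delta W_0,\dots,\Delta W_{n-1})$-measurable. In the one-step problem $\inf_{\bar\alpha}\E[f(t_n,\upl X,\cL(\upl X),\bar\alpha_n)h+\bar V(t_{n+1},\bar\cX^{t_n,X,\bar\alpha})]$, the control $\bar\alpha_n$ is $\cF_{t_n}=\mathcal G\vee\cF^W_{t_n}$-measurable. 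Replacing $\bar\alpha_n$ by a function of $(X,U)$, with $U$ extra randomness, changes nothing provided the joint law of $(X,\bar\alpha_n)$ is preserved. Since $\Delta W_n$ is independent of $\cF_{t_n}$, the law of $\bar\cX^{t_n,X,\bar\alpha}_{\cdot\wedge t_{n+1}}$ depends only on $\cL(X,\bar\alpha_n)$, and by Lemma~\ref{lemma:discrete_law_invariance} so does $\bar V(t_{n+1},\cdot)$ evaluated at it.

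The removal of the extra randomness $U$ is the main obstacle. Two routes are available. The first is a measurable selection: choose an $\vep$-minimising deterministic map $a\mapsto\psi_n(x)$ for the conditional problem. This is problematic because the cost is not separable in $\omega$: $\bar V(t_{n+1},\cdot)$ depends on the law, so a pointwise argmin is unavailable. The second route is the one I would try first: approximate $\cL(X,\bar\alpha_n)$ by laws of the form $\cL(X,\psi(X))$. The mean-field coupling makes this nontrivial. When $\cL(X)$ is atomless on its support, a Borel isomorphism or transport argument, as in the proof of Lemma~\ref{lemma:discrete_law_invariance}, shows that such deterministic couplings are weakly dense, possibly after adjoining part of the coordinate $X$. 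When atoms occur, e.g.\ $\xi$ deterministic at $n=0$, there is no randomness yet.

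For the $\psi^W$ class the obstacle is worse, since $\xi_0$ alone may be degenerate. I therefore expect the paper either to assume $\xi=\xi_0$ is the initial datum generating $\mathcal G$, or to exploit the $\mathcal G$-randomness together with $\Delta W$. The convergence $\bar J(\cdot,\bar\alpha^{(j)})\to\bar J(\cdot,\bar\alpha)$ along a sequence with $\cL(X,\bar\alpha^{(j)}_n)\to\cL(X,\bar\alpha_n)$ would follow from the continuity estimates of Lemma~\ref{lemma:continuity_of_barV}. This requires $f,b,\sigma$ to be continuous in $a$, an assumption I would have to add or circumvent.

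Iterating from $n=0$ to $N-1$ yields a feedback $\psi^X$ with $\bar J(0,\xi,\bar\alpha^X)\le\bar V(0,\xi)+N\vep$. Because $\bar\cX_{\cdot\wedge t_k}$ is itself a measurable function of $(\xi_0,\Delta W_0,\dots,\Delta W_{k-1})$, composing gives an element of $\cM^W$ with the same cost. Thus $\inf_{\cM^W}\le\inf_{\cM^X}\le\bar V+N\vep$, and letting $\vep\to0$ closes the chain of equalities.
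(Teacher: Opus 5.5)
\textbf{There is a genuine gap: the reverse inequalities are never proved.}

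Your density route has two requirements that are not met. The current state must have an atomless law at every step where you remove the extra randomness from $\bar\alpha_n$. And $b$ and $f$ must be continuous in $a$, which Assumptions~\ref{ass assum:L2_lip_b_sigma} and~\ref{assum:terminal_cost_f_g_Phi} do not provide. You note that at an atom ``there is no randomness yet'', but your last paragraph then iterates as if the obstruction were gone.

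It cannot be removed, because the statement is false for deterministic initial data. Take $d=\bmdim=1$, $A=[-1,1]$, $b(t,x,\mu,a)=a$, $\sigma\equiv0$, $f\equiv0$, and $\Terminal(x,\mu)=-\mathrm{Var}_{Y\sim\mu}(\Phi(Y))$ with $\Phi(y)=\frac1T\int_0^T y_s\,\d s$. Since $|\Phi(y)-\Phi(y')|\le\|y-y'\|_2$ and $|\Phi(y)|\le\|y\|_\infty$, Assumptions~\ref{ass assum:L2_lip_b_sigma}, \ref{assum:terminal_cost_f_g_Phi} and~\ref{ass assump:Hstruct} all hold. Let $\xi\equiv0$.
\begin{itemize}
\item Let $\zeta$ be a $\mathcal G$-measurable Rademacher variable. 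The control $\bar\alpha_n=\zeta$ for all $n$ lies in $\cA^\pi$ and gives $\upl\bar\cX_t=\zeta t$, so $\bar V(0,\xi)\le -T^2/4$.
\item Every $\psi^X\in\cM^X$ produces a deterministic path, so $\inf_{\cM^X}\bar J=0$.
\item Every $\psi^W\in\cM^W$ has $\bar\alpha_0$ deterministic. Hence $\Phi(\upl\bar\cX)=\frac1T\int_0^T(T-s)\bar\alpha_{\n(s)}\,\d s$ is a constant plus a variable with values in $[-m,m]$, where $m=\frac{(T-h)^2}{2T}$. Therefore $\inf_{\cM^W}\bar J\ge -m^2>-T^2/4$.
\end{itemize}
Nondegenerate noise does not rescue the statement: with $N=1$ and any constant $\sigma$, the gap is still $T^2/4$.

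\textbf{The paper's proof does not get past this point either.} Its Step~2 takes exactly the measurable-selection route you rejected. It freezes the law in $F(x,a;\mu_n)=f(t_n,\upl x,\mu_n,a)h+\bar V(t_{n+1},\bar\cX^{t_n,x,\mu_n,a})$, asserts $\bar V(t_n,\xi)=\inf_{\bar\alpha}\sE[F(\xi_{\cdot\wedge t_n},\bar\alpha_n;\mu_n)]$, and applies Lemma~\ref{lemma:measurable_selection}. That identity treats the continuation value as an average over $(x,a)$. This is precisely the separability you correctly identified as false, since $\bar V(t_{n+1},\cdot)$ depends on the joint law of $(\xi,\bar\alpha_n)$. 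In the example above, it returns $-m^2$ instead of $-T^2/4$ at $n=0$.

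Its Step~3, which rewrites a state feedback as a function of $(\xi_0,\Delta W)$, matches your final step. Your remark on the increment indexing is also pertinent: the paper's $\psi^W_k$ is fed $\Delta W_k=W_{t_{k+1}}-W_{t_k}$, which is not yet available at $t_k$.

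\textbf{What can be proved} is a version with an extra hypothesis such as $\cL(\xi_0)$ atomless. At least for $\cM^W$, this needs neither continuity in $a$ nor the DPP.
\begin{enumerate}
\item Discretise $\xi_0$ into $\xi_0^{(j)}$ on cells of diameter at most $1/j$.
\item Write an $\vep$-optimal control for $\xi_0^{(j)}$ as a function of $(\xi_0^{(j)},U,W)$, with $U$ uniform and independent of $(\xi_0^{(j)},W)$.
\item Replace $U$, and the Brownian path between grid points, by functions of $\xi_0$, keeping the joint law with $(\xi_0^{(j)},(\Delta W_k)_k)$ unchanged. This is possible because each cell carries an atomless conditional law.
\item Since $\bar J$ depends only on the joint law of initial condition, control and increments, the cost at $\xi_0^{(j)}$ is unchanged, and the new control lies in $\cM^W$.
\item Return from $\xi_0^{(j)}$ to $\xi_0$ using the estimate in the proof of Lemma~\ref{lemma:continuity_of_barV}, which is uniform in the control.
\end{enumerate}
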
 
\proof 
We denote by $v(t_n,{\xi}) =\inf_{\psi^X \in \cM^X} \bar J(t_n, \xi, \bar \a^X) $. We first show that $\bar V = v$ in Step 1 and 2. Then we will show the equivalence between $v$ and $\widehat V$ in Step 3.

\paragraph{Step 1: $\bar V \leq v$, $ \bar V(0,\xi) \leq \widehat V_0(\xi)$} 
This step is immediate from the definition of $\bar V$, $v$, and $\widehat V_0$.

\paragraph{Step 2: $\bar V \geq  v$}  
Fix \( 1 \leq n \leq N \), \( \xi \in \mathbf{S}_2(\sF^\pi)\)  and denote the probability distribution $\mu_n \coloneqq\cL(\upl \xi_{\cdot\wedge t_n})$. 
For every $t_n \in \pi$, we also denote the set
\begin{equation}\label{eq:M_n}
    \cM_{t_n} \coloneqq \{ \alpha: \Omega ~\ra A: \alpha \text{ is } \cF_{t_n}\text{-measurable} \}.
\end{equation}
Define the function $F(\cdot ; \mu_n): \sR^{dN} \times A \mapsto \sR$ such that 
\begin{equation} \label{eq:F_mu}
    F(x, a; \mu_n) := f\left(t_n, \upl x, \mu_n, a\right) h + \bar V \left( t_{n+1}, \bar \cX^{t_n, x, \mu_n, a} \right),
\end{equation}
where $\bar \cX^{t_n, x, \mu_n, a}$ denotes the Euler scheme \eqref{eq de euler scheme} from a fixed initial state $x = (x_i)_{i=0}^N \in \sR^{d N}$ with a fixed measure $\mu_n \in \cP_2(\sR^{dN})$, 
i.e., 
$$
\bar{\mathcal{X}}_{t_{n+1}}^{t_n, x, \mu_n, a} = x_n +b\left(t_n, \upl x , \mu_n, a \right) h+\sigma\left(t_n, \upl x , \mu_n \right)\Delta W_{\textcolor{jfc}{n}} .
$$
By DPP in Lemma~\ref{lemma:DPP},    \begin{equation}\label{eq:VbarDPP_update1}
        \begin{aligned}
            \bar V(t_n,\xi) &= \inf_{\bar \alpha \in\cA^\pi}\left\{\mathbb{E}\left[ f\left(t_n, \upl \bar \cX^{t_n, \xi, \bar \alpha}, \cL \left({\upl \bar \cX_{\cdot \wedge t_n}^{{t_n}, \xi, \bar \alpha}}\right),\bar \alpha_{n}\right) h\right]+ \bar V\left({t_{n+1}}, \bar \cX^{{t_{n}}, \xi, \bar \alpha}\right)\right\} \\
            &= \inf_{\bar \alpha \in\cA^\pi}\left\{\mathbb{E}\left[ f\left(t_n, \upl \xi_{\cdot \wedge t_n}, \cL\left({ \upl \xi_{\cdot \wedge t_n}}\right),\bar \alpha_{n}\right) h\right]+ \bar V\left({t_{n+1}}, \bar \cX^{{t_{n}}, \xi, \bar \alpha}\right)\right\}\\
            &= \inf_{\bar \alpha \in \cA^\pi} \sE [F(\xi_{\cdot \wedge t_n}, \bar \alpha_n; \mu_n)] ,
        \end{aligned}
    \end{equation}
    where the second equality follows from the non-anticipating property of $f$ (cf. Remark \ref{rmk:nonanticipate_fg}) and the last equality follows from \eqref{eq:F_mu}. 

We next present a measurable selection theorem adapted from \cite[Lemma F.3]{cosso2023optimal}. Since the proof follows directly by applying the same arguments as those in \cite[Lemma F.3]{cosso2023optimal} to discretised processes in $\bS_2(\sF^\pi)$, we omit it here:
\begin{lemma}[Measurable selection]\label{lemma:measurable_selection}
    Fix $t_n \in \pi$, $\xi \in \mathbf{S}_2({\sF^\pi})$ with probability distribution $\mu \in \cP_2(\sR^{dN})$. 
    Let $F: \sR^{dN} \times A \ra \sR$ be a measurable function. Denote $\cM$ as the set of Borel-measurable maps  $\{a: \sR^{dN}\ra A \}.$
    Suppose that $\mathbb{E}[|F(\xi, \a)|] <
+\infty,$ for any $\a \in \mathcal{M}_{t_n}$ with $\cM_{t_n}$ defined in \eqref{eq:M_n}, and also that $\inf _{a \in A} F(x, a)>-\infty, $ for any $x \in \sR^{dN}$. Then,
    \begin{equation}\label{eq:mble_equality}
        \inf_{\a \in \mathcal{M}_{t_n}} \mathbb{E}[F(\xi, \a)]=\inf_{\a \in \mathcal{M}} \mathbb{E}[F(\xi, \a(\xi))] = \sE \left[ \textrm{ess} \inf_{a \in A} F(\xi, a)\right].
    \end{equation}
Moreover, for any $\vep > 0$, there exists a Borel measurable function $\hat\alpha_{\mu}^\vep:\sR^{dN} \ra A$ such that 
$$
 \sE\left[F(\xi, \hat \alpha_\mu(\xi))\right] \leq \sE \left[ \text{ess} \inf_{a \in A} F(\xi, a) \right] +\vep.
$$
\end{lemma}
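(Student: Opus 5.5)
The statement to prove is the measurable selection Lemma~\ref{lemma:measurable_selection}. I would prove it by following the argument of \cite[Lemma F.3]{cosso2023optimal}, with the state space $\sR^{dN}$ (finite dimensional and Polish) in place of a path space. The key structural fact is that $\xi$ is $\cF_{t_n}$-measurable, and that the $\sigma$-algebra $\cF_{t_n}=\mathcal{G}\vee\cF^W_{t_n}$ is generated by a random element of a Polish space, namely $(\Gamma^{\mathcal{G}}, (W_s)_{s\le t_n})$. I will use this through a conditioning argument.

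\textbf{Trivial inequality.} First I establish the chain
\[
\inf_{\a \in \mathcal{M}_{t_n}} \mathbb{E}[F(\xi, \a)]\le \inf_{\a \in \mathcal{M}} \mathbb{E}[F(\xi, \a(\xi))].
\]
This holds because for any Borel map $a$, the random variable $a(\xi)$ is $\cF_{t_n}$-measurable, so $a(\xi)\in\mathcal{M}_{t_n}$.

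\textbf{Lower bound.} Next I would define $\underline F(x):=\inf_{a\in A}F(x,a)$, which is finite by hypothesis. Since $F$ is only Borel, $\underline F$ is a priori merely universally measurable (it is lower semianalytic, being an infimum over a Polish space). I would therefore interpret $\operatorname{ess\,inf}_{a}F(\xi,a)$ as $\underline F(\xi)$ up to a $\mathbb{P}$-null set, or work with a Borel version. For any $\a\in\mathcal{M}_{t_n}$ we have $F(\xi,\a)\ge \underline F(\xi)$ pointwise, which gives
\[
\inf_{\mathcal{M}_{t_n}}\E[F(\xi,\a)]\ge \E[\underline F(\xi)].
\]
The integrability assumption ensures that all the expectations involved make sense.

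\textbf{Upper bound via $\varepsilon$-optimal selection.} The core step is the reverse inequality. For this I would invoke the Jankov--von Neumann measurable selection theorem (Bertsekas--Shreve, Prop.~7.50): since $F$ is Borel on the Polish space $\sR^{dN}\times A$, for each $\varepsilon>0$ there is a universally measurable map $\hat a^\varepsilon:\sR^{dN}\to A$ satisfying
\[
F(x,\hat a^\varepsilon(x))\le \underline F(x)+\varepsilon \quad\text{wherever } \underline F(x)>-\infty .
\]
Because $\mu=\cL(\xi)$ is a single probability measure, a universally measurable map agrees $\mu$-a.e.\ with a Borel map. Modifying $\hat a^\varepsilon$ on a $\mu$-null set therefore yields a Borel $\hat\alpha^\varepsilon_\mu$ with the same property $\mu$-a.s. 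Integrating then gives
\[
\E[F(\xi,\hat\alpha^\varepsilon_\mu(\xi))]\le \E[\underline F(\xi)]+\varepsilon.
\]
This is the "moreover" statement. Letting $\varepsilon\to0$ closes the chain of inequalities and proves \eqref{eq:mble_equality}.

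\textbf{Main obstacle.} The delicate part is this measurability step: passing from a universally measurable $\varepsilon$-selector to a Borel one, and making sure the modification on the null set is harmless. Handling the null set requires that $F(\xi,\cdot)$ evaluated at the modified control stays integrable, which is exactly what the assumption $\E|F(\xi,\a)|<\infty$ for all $\a\in\mathcal M_{t_n}$ provides. If I wanted to avoid Prop.~7.50, the fallback would be to approximate $F$ by countable dense families of controls in $A$. This works if $F$ is continuous in $a$, but the lemma only assumes measurability, so I would rely on the analytic-set selection theorem.
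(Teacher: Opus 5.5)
Your proof is correct and follows essentially the route the paper relies on: the paper omits the argument and defers to \cite[Lemma F.3]{cosso2023optimal}, which rests on the same combination of the pointwise bound $F(\xi,\alpha)\ge\inf_{a\in A}F(\xi,a)$, an $\varepsilon$-optimal universally measurable selector from the Jankov--von Neumann theorem, and a $\mu$-a.e.\ Borel modification of that selector. Two points deserve to be made explicit. First, the $\cF_{t_n}$-measurability of $\xi$ that you invoke is needed both for $a(\xi)\in\cM_{t_n}$ and for applying the integrability hypothesis to $\hat\alpha^\varepsilon_\mu(\xi)$, but it fails for a general $\xi\in\bS_2(\sF^\pi)$ and holds only for $\xi_{\cdot\wedge t_n}$, as in the paper's application. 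Second, the conditioning argument you announce is never actually needed.
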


    Fix $\vep > 0$. According to Lemma \ref{lemma:measurable_selection},  there exists a measurable function $\hat \alpha_{\mu_n} (\cdot)$ such that
$$
\begin{aligned}
    \inf _{\bar \alpha \in \cA^\pi} \sE [F( \xi_{\cdot \wedge t_n}, \bar \alpha_n; \mu_n)]& \geq \sE \left[ F( \xi_{\cdot \wedge t_n}, \hat\alpha_{\mu_n}( \xi_{\cdot \wedge t_n}); \mu_n) \right] - \frac{\vep}{N} \\
    &= \sE \left[ f(t_n, \upl\xi, \mu_n, \hat\a_{\mu_n}( \xi_{\cdot \wedge t_n})) h   \right ] + \bar V\left(t_{n+1}, \bar \cX^{t_n, \xi,  \hat \alpha} \right) -\frac{\vep}{N} .
\end{aligned}
$$
Then combining the above inequality  with \eqref{eq:VbarDPP_update1}, we obtain
\begin{equation}\label{eq:mble_V}
\begin{aligned}
 \bar V(t_n, \xi) 
      &\geq  \sE \left[ f(t_n, \upl \xi, \mu_n, \hat\a_{\mu_n}(\xi_{\cdot \wedge t_n})) \right ] h  + \bar V\left(t_{n+1}, \bar \cX^{t_n, \xi, \hat \alpha} \right) - \frac{\vep}{N}.
\end{aligned}
\end{equation}

Now define a control sequence \( \left\{ \hat \alpha_{\mu_n}\left( \bar \cX_{\cdot \wedge t_n}^{0, \xi, \hat{\a}}  \right) \right\}_{1\leq n\leq N} \) recursively, with $\mu_k = \cL(\upl \bar \cX_{\cdot \wedge t_k}^{0, \xi, \hat{\a}})$ for any $k \in [N]$.
By Proposition \ref{pr prop:moment_estimate_first_euler}, $\bar \cX^{0,\xi, \hat{\a}} \in \mathbf{S}_2(\sF^\pi).$ Then by applying \eqref{eq:mble_V} and the flow property (cf. Proposition \ref{pr prop:moment_estimate_first_euler}(ii)) recursively,
we have
$$
\begin{aligned}
      &\bar V(0,\xi)\geq \sum_{n=0}^{N-1} \sE\!\left[f\!\left( t_n,  \upl \bar \cX_{\cdot \wedge t_n}^{0,\xi,\hat \a},
        \cL\!\left( \upl \bar \cX_{\cdot \wedge t_n}^{0,\xi,\hat \a}\right),
        \hat\a_{\mu_n}\!\left( \upl \bar \cX_{\cdot \wedge t_n}^{0,\xi,\hat \a}\right) \right)\right] h
        +\bar V(T, \bar \cX^{0,\xi, \hat{\a}}) -\vep  \\
     &= \bar J(0,\xi, \hat{a}) - \vep.
\end{aligned}
$$
Therefore, $\hat{\a}$ is an $\vep$-optimal solution to \eqref{eq barV}. By taking $\vep \downarrow 0$, we finish the proof of $\bar V \geq   v.$ 

\paragraph{Step 3: $v= \widehat V$}

Note with $\hat\a \in \cM^X$ and \eqref{eq de euler scheme}, $\bar{\mathcal X}_{t_i}^{0,\xi,\hat{a}}$ is a deterministic function of $\xi_0$ and $(\Delta W_k)_{0\le k\le i}$.
Hence, there exists a function $\psi^W \in \cM^W$ such that $\bar J(0,\xi, \bar\alpha^W) = \bar J(0,\xi, \bar\alpha^{\hat\a})$, which is an $\vep$-optimal solution to \eqref{eq barV}. By taking $\vep \downarrow 0$, it shows $ \widehat{V} \geq v$.  With Step I, the equivalence between $v$ and $\widehat V$ is established.
\eproof

\begin{remark}\label{rmk:law_invariance_b}
Using Remark \ref{re intrinsic value disc time control}, we naturally define the intrinsic value function associated with the discrete-time control problem under feedback controls driven by the Brownian increments. For $\mu\in\cP_2((\R^d)^N)$, we also write with slight abuse of notations,
\(
\widehat V_0(\mu):=\widehat V_0(\xi),
\) whenever $\cL(\xi) = \mu.$
  
\end{remark}

The following corollary provides an estimate for the difference between the value function $\widehat V$ defined in \eqref{eq def fdbck ctrl increments}, corresponding to feedback controls depending on the initial condition and the Brownian increments, and the value function $V$ associated with the continuous-time MKV dynamics \eqref{eq:state_dynamics_dX}. This result follows directly from Theorem \ref{thm:same-value-on-fdbck-ctrl} together with Theorem \ref{thm:V_minus_barV}.

\begin{corollary} \label{thm:hat_v_minus_v}
Suppose that Assumptions  \ref{ass assum:L2_lip_b_sigma}, \ref{assum:terminal_cost_f_g_Phi}  and  \ref{ass assump:Hstruct} hold. For $\mu \in \cP_{\infty,2}$, 
    $$
  |  V(0,\mu) - \widehat{V}_0(\mathfrak{p}^\pi\sharp\mu) | \leq C h^\frac{1}{4},
    $$
    where $C$ is a positive constant that does not depend on $\pi$.
\end{corollary}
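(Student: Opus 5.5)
The corollary follows by combining Theorem~\ref{thm:same-value-on-fdbck-ctrl} with Theorem~\ref{thm:V_minus_barV} through the triangle inequality. The only real care needed is in identifying the intrinsic value functions, which are defined via representatives.

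\textbf{Step 1: fix representatives.} Fix $\mu\in\cP_{\infty,2}$. By Assumption~\ref{assum:standing}(ii), choose $\xi\in\bS_2(\mathcal{G})\subset\bS_2(\sF)$ with $\cL(\xi)=\mu$. Then the discrete process ${}^\pi\xi=\mathfrak{p}^\pi(\xi)$ is $\mathcal{F}_0$-measurable, hence lies in $\bS_2(\sF^\pi)$. Moreover $\cL({}^\pi\xi)=\mathfrak{p}^\pi\sharp\mu$. By Lemma~\ref{lemma:discrete_law_invariance} and Remark~\ref{re intrinsic value disc time control}, $\bar V(0,\mathfrak{p}^\pi\sharp\mu)=\bar V(0,{}^\pi\xi)$. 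By Remark~\ref{rmk:law_invariance_b}, $\widehat V_0(\mathfrak{p}^\pi\sharp\mu)=\widehat V_0({}^\pi\xi)$.

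\textbf{Step 2: equate the discrete values.} Apply Theorem~\ref{thm:same-value-on-fdbck-ctrl} with initial condition ${}^\pi\xi$. This gives $\bar V(0,{}^\pi\xi)=\widehat V_0({}^\pi\xi)$, and therefore
\[
\bar V(0,\mathfrak{p}^\pi\sharp\mu)=\widehat V_0(\mathfrak{p}^\pi\sharp\mu).
\]

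\textbf{Step 3: conclude.} Theorem~\ref{thm:V_minus_barV} gives $|V(0,\mu)-\bar V(0,\mathfrak{p}^\pi\sharp\mu)|\le Ch^{1/4}$ with $C$ independent of $\pi$. Substituting the equality from Step~2 yields the claim with the same constant.

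\textbf{Main obstacle.} The only delicate point is that the intrinsic $\widehat V_0$ is well defined, i.e.\ independent of the representative of $\mathfrak{p}^\pi\sharp\mu$. This is not proved separately, but it follows from the identity $\widehat V_0=\bar V(0,\cdot)$ in Theorem~\ref{thm:same-value-on-fdbck-ctrl}, which holds for every $\xi\in\bS_2(\sF^\pi)$, combined with the law invariance of $\bar V$ in Lemma~\ref{lemma:discrete_law_invariance}. No further estimate is needed.
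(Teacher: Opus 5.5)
Your proposal is correct and follows the paper's own route: the paper also obtains the corollary by substituting the identity $\bar V(0,\cdot)=\widehat V_0$ from Theorem~\ref{thm:same-value-on-fdbck-ctrl} into the estimate of Theorem~\ref{thm:V_minus_barV}. Your extra care in choosing a representative ${}^\pi\xi\in\bS_2(\sF^\pi)$ and in justifying that the intrinsic $\widehat V_0$ is well defined (via Lemma~\ref{lemma:discrete_law_invariance}) spells out what the paper leaves implicit in Remark~\ref{rmk:law_invariance_b}.
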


\section{Fully Implementable Schemes and Numerical Approximation}\label{sec:fully_implement_scheme}

\subsection{Particle system approximation}\label{sec:particle_approx}

We now introduce a particle approximation of the value function $\widehat V_0$ defined in \eqref{eq def fdbck ctrl increments}. This approximation provides the basis for the numerical implementation of the optimization problem \eqref{eq:vM}. Since the numerical scheme developed below is based on feedback controls depending on Brownian increments and the initial condition, we focus throughout this section on the class $\cM^W$ introduced in Section \ref{sec:b_feedback}.

Let $M\in\mathbb{N}^+$ denote the number of particles. Let $(W^m)_{1\le m\le M}$ be a collection of independent Brownian motions, and let $(\xi^m)_{1\le m\le M}$ be i.i.d. copies of the initial condition $\xi\sim\mu$, where $\mu$ is a prescribed initial distribution in $\cP_2((\R^d)^N)$. For each $m \in [M]$ and a given Brownian-increment feedback control $\psi^W\in\cM^W$, define the particle controls by
\begin{equation}\label{eq:bar_alpha_W_particle}
    \bar\alpha^{W,m}_0
    \coloneqq
    \psi^W_0\bigl(\xi^m_0\bigr),
    \qquad
    \bar\alpha^{W,m}_n
    \coloneqq
    \psi^W_n\bigl(\xi^m_0,\Delta W^m_1,\ldots,\Delta W^m_n\bigr),
    \qquad 1\le n\le N-1 ,
\end{equation}
in parallel with \eqref{eq de beta}. The corresponding interacting particle system is given by
\begin{equation}
    \begin{aligned}
    X^m_{t_0} &= \xi^m_0, \\
    X^m_{t_{n+1}} 
    &= X^m_{t_n}
    + b_n^m\, h + \sigma_n^m\,\Delta W^m_{n+1},
    \qquad 0\le n\le N-1,
    \end{aligned}
    \label{eq:ParticleSystem}
\end{equation}
with $b_n^m \coloneqq b(t_n,{}^\ell X^m_{\cdot\wedge t_n},\mu^M_{t_n},\bar\alpha^{W,m}_n)$,
$\sigma_n^m \coloneqq \sigma(t_n,{}^\ell X^m_{\cdot\wedge t_n},\mu^M_{t_n})$, and, for $t\in\pi$,
$\mu^M_t \coloneqq \frac{1}{M}\sum_{k=1}^M \delta_{{}^\ell X^k_{\cdot\wedge t}}$.

Under our standing assumptions, one readily verifies the uniform moment estimate
\begin{equation}\label{eq:moment_bound_particle_approx}
    \mathbb{E}\left[
    \max_{0\le n\le N}\left|X^m_{t_n}\right|^2
    \right]
    \le
    C\left(
    1+\mathbb{E}\left[\|\xi^m_{\cdot\wedge 0}\|_\infty^2\right]
    \right),
    \qquad 1\le m\le M .
\end{equation}

Similar particle approximation can also be formulated for state-path feedback controls $\psi^X\in\cM^X$. In particular, one may define
\begin{equation}\label{eq:bar_alpha_X_particle}
    \bar\alpha^{X,m}_n
    \coloneqq
    \psi^X_n\bigl(X^m_{t_0},\ldots,X^m_{t_n}\bigr),
    \qquad 0\le n\le N-1 .
\end{equation}
Replacing $\bar\alpha^{W,m}_n$ by $\bar\alpha^{X,m}_n$ in \eqref{eq:ParticleSystem} yields the corresponding particle system. In the following section, we focus on the particle approximation result with $\psi^W$. 

Let $J^M$ denote the objective function for the particle system in \eqref{eq:ParticleSystem}: for any  $\mu \in \cP_{2}((\R^d)^N)$, and $\psi^W \in \cM^W$,
\begin{equation}\label{eq:objective_J_M}
     J^M(0, \mu, \psi^W) \coloneqq \mathbb{E} \left[\frac{1}{M} \sum_{m=1}^M\sum_{n=0}^{N-1} f\left(t_n, \upl X^m_{\cdot \wedge t_n},  \mu_{t_n}^M, \bar\alpha^{W,m}_n\right) h + \Terminal\left(  \upl X^m,  \mu_{T}^M\right) \right],
\end{equation}
Finally, we introduce
\begin{equation}\label{eq:vM}
    {V}^M_0(\mu)=\inf _{\psi^W \in \cM^W } J^M(0,\mu,\psi^W).
\end{equation}

The following theorem establishes a particle approximation error bound between the value function $\widehat V_0$ in \eqref{eq def fdbck ctrl increments}, based on the Euler scheme with the exact path law, and $V^M_0$ in \eqref{eq:vM}, based on the Euler scheme with the empirical distribution in the update. The bound is given in terms of the number of particles $M$.
\begin{theorem}\label{thm:particle_approx}   Fix $q > 2.$ 
    Let $\mu \in \cP_{q}((\R^d)^N)$. Then there exists a constant $C \geq 0$ independent of $h$ and $M$ such that
$$
\left|\widehat{V}_0(\mu)-{V}^M_0(\mu)\right| \le  C M^{-\gamma({q, d})/2}.
$$
Here $\gamma(q, d) = \min \{2 / ((N+1)d) , (q-2) / q\}$.
\end{theorem}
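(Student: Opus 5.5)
The plan is to prove that, uniformly over controls $\psi^W\in\cM^W$,
\[
\bigl|\bar J(0,\xi,\bar\alpha^W)-J^M(0,\mu,\psi^W)\bigr|\le C M^{-\gamma(q,d)/2}.
\]
Taking infima on both sides then gives the theorem, since $\widehat V_0(\mu)=\inf_{\psi^W}\bar J(0,\xi,\bar\alpha^W)$ and $V^M_0(\mu)=\inf_{\psi^W}J^M$. The tool is a synchronous coupling in the style of propagation of chaos.

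\emph{Coupling.} Fix $\psi^W$. For each $m\in[M]$ I build the ``decoupled'' Euler particle $\bar X^m:=\bar\cX^{0,\xi^m,W}$. It is driven by the same $\xi^m$, the same $W^m$ and the same control $\bar\alpha^{W,m}_n$ as $X^m$, but it uses the exact law $\bar\mu_{t_n}:=\cL({}^\ell\bar\cX^{0,\xi,W}_{\cdot\wedge t_n})$ instead of $\mu^M_{t_n}$. Because the control is a deterministic function of $(\xi^m_0,\Delta W^m)$, the $\bar X^m$ are i.i.d.\ copies of the Euler process of Section~\ref{sec:b_feedback}, and $\bar J(0,\xi,\bar\alpha^W)=\frac1M\sum_m\E[\cdots \bar X^m\cdots]$. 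This common-control coupling is the key structural point. Markovian feedback in $X$ would also work, but the Brownian feedback makes the controls literally identical, so the drift difference has no control term.

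\emph{Stability estimate.} I set $e_n:=\frac1M\sum_m\E|X^m_{t_n}-\bar X^m_{t_n}|^2$, which by exchangeability equals $\E|X^1_{t_n}-\bar X^1_{t_n}|^2$. Repeating the computation in Step~1 of Lemma~\ref{lemma:continuity_of_barV}, using Assumption~\ref{ass assum:L2_lip_b_sigma} and Lemma~\ref{le prop interpol}(i), gives
\[
e_{n+1}\le(1+Ch)e_n+Ch^2\sum_{k\le n}e_k+Ch\,\E\bigl[\cWtwo(\mu^M_{t_n},\bar\mu_{t_n})^2\bigr].
\]
Next I split with the empirical measure $\bar\mu^M_{t_n}:=\frac1M\sum_m\delta_{{}^\ell\bar X^m_{\cdot\wedge t_n}}$, using
\[
\cWtwo(\mu^M,\bar\mu^M)^2\le\tfrac1M\sum_m\|{}^\ell X^m-{}^\ell\bar X^m\|_2^2\le h\sum_{k\le n}\tfrac1M\sum_m|X^m_{t_k}-\bar X^m_{t_k}|^2.
\]
The first piece is absorbed by the Gronwall sum. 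Gronwall then yields
\[
\max_n e_n\le C\max_n\E\bigl[\cWtwo(\bar\mu^M_{t_n},\bar\mu_{t_n})^2\bigr].
\]

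\emph{Empirical measure rate.} This is the main obstacle. The path ${}^\ell\bar X^m_{\cdot\wedge t_n}$ is determined by the vector $(\bar X^m_{t_0},\dots,\bar X^m_{t_N})\in(\R^d)^{N+1}$, and $\|{}^\ell x-{}^\ell y\|_2^2\le h\sum_i|x_{t_i}-y_{t_i}|^2\le T\max_i|x_{t_i}-y_{t_i}|^2$. So the $\cWtwo$ distance between the path laws is bounded by the Euclidean $\cW_2$ distance between empirical and true laws of i.i.d.\ vectors in dimension $D=(N+1)d$. Proposition~\ref{pr prop:moment_estimate_first_euler}(iii) with $\xi\in\cP_q$ gives uniform $q$-th moments, so Fournier--Guillin, in its rate for $\cW_2^2$, gives $\E[\cW_2^2]\le C\,(M^{-2/D}+M^{-(q-2)/q})$, up to the usual logarithmic correction in borderline cases. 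That is exactly $CM^{-\gamma(q,d)}$. The delicate points are as follows. First, the moment constant must be uniform in $\psi^W$ and $h$, which Proposition~\ref{pr prop:moment_estimate_first_euler} provides. Second, the constant in the Fournier--Guillin bound depends on $D$, so $C$ depends on $N$. I will accept this and treat $N$ as fixed for the statement, as the exponent already does, rather than try to remove it. In total this gives $\max_n e_n\le CM^{-\gamma}$.

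\emph{Cost comparison.} For the costs I apply Assumption~\ref{assum:terminal_cost_f_g_Phi} to each $f$-term and to the $g$-term. The growth factor is bounded in $L^2$ by the moment bounds \eqref{eq:moment_bound_particle_approx} and Proposition~\ref{pr prop:moment_estimate_first_euler}, together with $\E\,\cWinf(\mu^M,\delta_0)^2\le\E\|{}^\ell X^1\|_\infty^2$. Cauchy--Schwarz then bounds each difference by
\[
C\bigl(\E\|{}^\ell X^m-{}^\ell\bar X^m\|_2^2+\E\,\cWtwo(\mu^M,\bar\mu)^2\bigr)^{1/2}\le C\bigl(\max_n e_n+M^{-\gamma}\bigr)^{1/2}\le CM^{-\gamma/2}.
\]
This square root is where the final exponent $\gamma/2$ comes from. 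Summing the $N$ running terms (each weighted by $h$) and the terminal term gives the uniform bound on $|\bar J-J^M|$, and taking infima over $\psi^W\in\cM^W$ concludes the proof.
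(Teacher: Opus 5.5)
Your proposal is correct and is essentially the paper's proof. The steps match one for one: the synchronous coupling with identical Brownian-increment controls followed by Gronwall (Lemma~\ref{lemma:delta_X_B_Sigma}), Fournier--Guillin applied to the interpolation vectors in $(\R^d)^{N+1}$ (Lemma~\ref{lemma:wasserstein_bound}), the local-Lipschitz and Cauchy--Schwarz cost comparison that produces $M^{-\gamma/2}$, and passage to infima over $\psi^W$.

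Two caveats apply equally to the paper. First, the exponent $2/((N+1)d)$ needs $(N+1)d>4$: at $(N+1)d=4$ a logarithm appears, and for $(N+1)d\le 3$ Fournier--Guillin only gives $M^{-1/2}$, a genuine loss rather than a logarithmic one. This is why Lemma~\ref{lemma:wasserstein_bound} assumes dimension at least $5$. Second, the $N$-dependence of $C$ that you concede is also implicit in the paper's use of that lemma.

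Separately, your aside that state feedback $\psi^X$ ``would also work'' is wrong for merely measurable $\psi^X$. The coupled controls $\psi^X(X^m)$ and $\psi^X(\bar X^m)$ need not be close, which is exactly why the argument is run on $\cM^W$.
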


To prove Theorem~\ref{thm:particle_approx}, we first show that the difference between the particle paths $(X^m)$ governed by the particle system~\eqref{eq:ParticleSystem} and the process $\bar\cX$ defined in \eqref{eq de euler scheme} with control process given by \eqref{eq de beta} can be bounded in terms of the Wasserstein distance between the law of $\bar\cX$ and its empirical distribution based on $M$ particles (see Lemma~\ref{lemma:delta_X_B_Sigma}).  Then, by viewing the linear interpolated path as (finite) high dimensional vector, 
we  apply a classical result from \cite[Theorem~1]{fournier2015rate} to bound this Wasserstein distance in terms of the number of particles $M$, as stated in Lemma~\ref{lemma:wasserstein_bound}.
A complete proof of Lemma~\ref{lemma:delta_X_B_Sigma} and Theorem~\ref{thm:particle_approx} is deferred to Appendix~\ref{sssec:proof_of_thm_particle_approx}.

\begin{lemma}\label{lemma:delta_X_B_Sigma}
For $1 \leq m\leq M$, let $\bar{\cX}^m \coloneqq \bar\cX^{\xi^m,W^m}$ denote the Euler scheme defined in
\eqref{eq de euler scheme} where $W$ is replaced by $W^m$ and the control process is given by
\eqref{eq de beta} with $\bar\alpha^{W,m}_n = \psi^W_n(\xi^m_0, (\Delta W^m_k)_{1\leq k \leq n})$ for any $0\leq n \leq N-1$.
We also denote by $X^m = X^{\xi^m,W^m}$ following the particle system in \eqref{eq:ParticleSystem} with controls \eqref{eq:bar_alpha_W_particle}.
Let  $\bar\mu^M$ denote the empirical distribution of $(\bar\cX^m)_{1\leq m \leq M}$.

Then for any  $ t_i \in \pi$ and $ 1\leq m\leq M$, we have that
$$
\mathbb{E}\left[\left| \bar\cX_{t_i}^m -  X^m_{t_i}\right|^2\right] \leq C \max _{0\leq n \leq N}  \cW_{2, L^2} \left(\cL\left(\bar\cX^m_{\cdot \wedge t_n}\right), \bar\mu_{t_n}^M \right). 
$$
\end{lemma}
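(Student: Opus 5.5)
We will run a synchronous-coupling Gronwall argument on the grid. The Euler process $\bar\cX^m$ and the particle $X^m$ are driven by the same initial condition $\xi^m_0$, the same Brownian increments $(\Delta W^m_k)_k$, and hence the same control $\bar\alpha^{W,m}_n=\psi^W_n(\xi^m_0,\Delta W^m_1,\ldots,\Delta W^m_n)$. The only discrepancy comes from the state and measure arguments of $b$ and $\sigma$; the control arguments coincide and cancel exactly. This is why the feedback class $\cM^W$ is convenient here: with state feedback $\cM^X$ the controls would differ and we would need Lipschitz regularity of $\psi^X$.

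Set $\delta_n := \bar\cX^m_{t_n}-X^m_{t_n}$, with $\delta_0=0$. As in Step 1 of the proof of Lemma~\ref{lemma:continuity_of_barV}, we expand the square in the update \eqref{eq de euler scheme} versus \eqref{eq:ParticleSystem}. The cross term with $\Delta W^m_{n+1}$ has zero expectation because $\Delta W^m_{n+1}$ is independent of $\cF_{t_n}$. This gives
\[
\E|\delta_{n+1}|^2 \le (1+h)\E|\delta_n|^2 + C h\, \E\big[|\Delta b_n|^2 + \|\Delta\sigma_n\|_F^2\big],
\]
where $\Delta b_n$ and $\Delta\sigma_n$ are the differences of the coefficients evaluated at $({}^\ell\bar\cX^m_{\cdot\wedge t_n},\cL(\bar\cX^m_{\cdot\wedge t_n}))$ and at $({}^\ell X^m_{\cdot\wedge t_n},\mu^M_{t_n})$. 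By Assumption~\ref{ass assum:L2_lip_b_sigma}, these are bounded by $\|{}^\ell\bar\cX^m_{\cdot\wedge t_n}-{}^\ell X^m_{\cdot\wedge t_n}\|_2$ plus the term $\cWtwo(\cL(\bar\cX^m_{\cdot\wedge t_n}),\mu^M_{t_n})$. The path term is controlled by $h\sum_{i\le n}|\delta_i|^2$ via Lemma~\ref{le prop interpol}(i).

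For the measure term we use the triangle inequality,
\[
\cWtwo\big(\cL(\bar\cX^m_{\cdot\wedge t_n}),\mu^M_{t_n}\big)\le \cWtwo\big(\cL(\bar\cX^m_{\cdot\wedge t_n}),\bar\mu^M_{t_n}\big)+\cWtwo\big(\bar\mu^M_{t_n},\mu^M_{t_n}\big).
\]
For the second piece we couple the two empirical measures index by index, which gives $\cWtwo(\bar\mu^M_{t_n},\mu^M_{t_n})^2\le \frac1M\sum_k\|{}^\ell\bar\cX^k_{\cdot\wedge t_n}-{}^\ell X^k_{\cdot\wedge t_n}\|_2^2$. Since the pairs $(\bar\cX^k,X^k)$ are exchangeable (i.i.d. inputs and a symmetric system), its expectation equals $\E\|{}^\ell\bar\cX^m_{\cdot\wedge t_n}-{}^\ell X^m_{\cdot\wedge t_n}\|_2^2\le C h\sum_{i\le n}\E|\delta_i|^2$.

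Setting $y_n=\max_{k\le n}\E|\delta_k|^2$ and $e := \max_n \E[\cWtwo(\cL(\bar\cX^m_{\cdot\wedge t_n}),\bar\mu^M_{t_n})^2]$, we obtain
\[
y_{n+1}\le (1+Ch)y_n + C h\, e .
\]
Discrete Gronwall with $y_0=0$ then yields $y_N\le C e$, with $C$ independent of $h$, $M$ and $m$.

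The main point needing care is that the right-hand side of the statement is written without an expectation and without a square. We read it as the expected squared distance $\E[\cWtwo(\cdot,\bar\mu^M_{t_n})^2]$: the empirical measure is random, and this is the form in which \cite{fournier2015rate} is applied in Lemma~\ref{lemma:wasserstein_bound}. We would state the bound in that form. The remaining steps, namely the zero-mean martingale cross term and the exchangeability identity, are routine.
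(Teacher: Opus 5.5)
Your proposal is correct and is essentially the paper's own argument: synchronous coupling with identical controls, splitting the measure error through $\bar\mu^M_{t_n}$, bounding $\cWtwo(\bar\mu^M_{t_n},\mu^M_{t_n})^2$ by the index-wise coupling plus symmetry (exchangeability is the accurate notion where the paper says i.i.d.), and discrete Gronwall. Your reading of the right-hand side as $\max_n \E\bigl[\cWtwo(\cL(\bar\cX^m_{\cdot\wedge t_n}),\bar\mu^M_{t_n})^2\bigr]$ is what the paper's proof delivers and how the lemma is used in Theorem~\ref{thm:particle_approx}; one harmless slip is that $\|{}^\ell x_{\cdot\wedge t_n}\|_2^2$ also contains the tail term $\frac{T-t_n}{T}|x_{t_n}|^2$ besides $\frac{h}{T}\sum_{i\le n}|x_{t_i}|^2$, so the path and empirical-coupling terms are bounded by $C y_n$ rather than $Ch\sum_{i\le n}\E|\delta_i|^2$, which still yields your recursion $y_{n+1}\le(1+Ch)y_n+Ch\,e$.
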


We then include a result about the Wasserstein distance between the true distribution and the empirical distribution from \cite[Theorem 1]{fournier2015rate} and adapt it to our case:
\begin{lemma}\label{lemma:wasserstein_bound}
For $q > 2$ and
    $\tilde d \geq 5,$  let $\mu \in \mathcal{P}_q(\mathbb{R}^{\tilde d})$ such that $\int_{\sR^{\tilde d}} |x|^q \mu (\d x) < c$ for a constant $c$. 
Let $\mu^M$ denote the empirical measure associated with $M$ i.i.d. samples with distribution $\mu$.
    There exists a constant $C$ such that, for all $M \geq 1$,
    $ \mathbb{E}\left[\mathcal{W}_2\left(\mu^M, \mu\right)^2\right] \leq C M^{-\gamma},$ where $\gamma \coloneqq \min \{2 / \tilde d , (q-2) / q\}$.
\end{lemma}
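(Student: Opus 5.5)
The plan is to apply \cite[Theorem~1]{fournier2015rate} directly, with the parameter choices forced by our setting, and then read off the rate. I will recall that result in the form needed here. Let $p>0$, $\tilde d\ge 1$, $q>p$, and let $\mu\in\cP(\sR^{\tilde d})$ have finite moment $M_q(\mu):=\int|x|^q\mu(\d x)$. Then there is a constant $C$, depending only on $(p,q,\tilde d)$, such that for all $M\ge1$
\[
\sE\bigl[\cW_p(\mu^M,\mu)^p\bigr]\le C\,M_q(\mu)^{p/q}\,R_{p,q,\tilde d}(M).
\]
The rate $R_{p,q,\tilde d}(M)$ depends on the regime. In the high-dimensional regime $p<\tilde d/2$ with $q\neq \tilde d/(\tilde d-p)$, it is
\[
R_{p,q,\tilde d}(M)=M^{-p/\tilde d}+M^{-(q-p)/q}.
\]
We take $p=2$ and work on $\sR^{\tilde d}$ with the Euclidean norm, so that $\cW_p^p$ is exactly $\cW_2^2$ as it appears in the statement.

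The key step is checking which case of Fournier--Guillin applies. Since $\tilde d\ge5$, we have $p=2<\tilde d/2$, so we are in the high-dimensional regime. We must also exclude the critical exponent $q=\tilde d/(\tilde d-2)$. This is immediate: for $\tilde d\ge5$ we have $\tilde d/(\tilde d-2)\le 5/3<2<q$. Hence the logarithmic corrections that appear in the borderline cases never arise, and no separate case analysis is needed. Applying the theorem therefore gives
\[
\sE\bigl[\cW_2(\mu^M,\mu)^2\bigr]\le C\,M_q(\mu)^{2/q}\bigl(M^{-2/\tilde d}+M^{-(q-2)/q}\bigr).
\]

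Next I use the moment hypothesis $M_q(\mu)<c$ to replace $M_q(\mu)^{2/q}$ by $c^{2/q}$. This constant is independent of $M$, and it is uniform over all measures satisfying the same moment bound. That uniformity matters later, because the lemma will be applied to the laws of the interpolated Euler paths, whose $q$-th moments are controlled uniformly by Proposition~\ref{pr prop:moment_estimate_first_euler}(iii).

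Finally, since $M\ge1$, each of the two terms is at most $M^{-\gamma}$ with $\gamma=\min\{2/\tilde d,(q-2)/q\}$. The sum is therefore bounded by $2M^{-\gamma}$, and absorbing constants gives the claim.

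I expect the only real obstacle to be the bookkeeping of the hypotheses. One must match the norm and moment conventions of \cite{fournier2015rate}, which uses the Euclidean norm on $\sR^{\tilde d}$ and $\cW_p^p$ rather than $\cW_p$. One must also confirm that the constant depends only on $(q,\tilde d)$ and on $c$, and on nothing else. The regime verification above is the substantive point, and it is where the assumption $\tilde d\ge5$ is used.
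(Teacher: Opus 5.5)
Your proposal is correct and is the same argument as the paper, which simply invokes \cite[Theorem~1]{fournier2015rate} with $p=2$ in the regime $2<\tilde d/2$ (this is exactly where $\tilde d\ge 5$ enters). In that regime the critical case $q=\tilde d/(\tilde d-2)$ is excluded automatically, since $\tilde d/(\tilde d-2)\le 5/3<2<q$, and $M_q(\mu)^{2/q}\le c^{2/q}$ together with $M^{-2/\tilde d}+M^{-(q-2)/q}\le 2M^{-\gamma}$ is absorbed into the constant.

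A side remark on your comment about the later application: the constant from \cite{fournier2015rate} depends on $\tilde d=(N+1)d$, and the Euclidean $q$-th moment of the grid vector in $\sR^{(N+1)d}$ is only bounded by $(N+1)^{q/2}$ times the estimate of Proposition~\ref{pr prop:moment_estimate_first_euler}(iii). So the uniformity you get is over controls for fixed $\pi$, not in $h$; this does not affect the lemma as stated.
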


Finally, as a direct consequence of Theorems~\ref{thm:hat_v_minus_v} and \ref{thm:particle_approx}, we obtain the following bound for the difference between the value function $V$ of the continuous-time path-dependent McKean--Vlasov control problem \eqref{eq:state_dynamics_dX} and its $M$-particle approximation $V_0^M$ with feedback controls based on Brownian increments \eqref{eq:ParticleSystem}.
\begin{theorem}\label{thm:final_thm}
    Suppose that Assumptions \ref{ass assum:L2_lip_b_sigma}, \ref{assum:terminal_cost_f_g_Phi} and \ref{ass assump:Hstruct} hold, and  $(N+1)d \geq 5$. 
    Fix $q > 2.$ Suppose that for any $m\in [M], \, \sE[|\xi_0^m|^q] \leq c$ for some constant $c$. Then there exists a constant $C \geq 0$ independent of $h$ and $M$ such that for any $\mu \in \cP_{\infty, 2}$, 
    $$
  | V_0^M (0, \mathfrak{p}^\pi\sharp\mu) -V(0,\mu) | \leq C h^\frac{1}{4} + {M^{- \min \left\{ \frac{1}{(N+1)d}, \frac{q-2}{2q}\right\}}}.
$$
\end{theorem}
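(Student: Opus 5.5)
The plan is a triangle inequality through the intermediate value $\widehat V_0(\mathfrak{p}^\pi\sharp\mu)$:
\[
|V_0^M(0,\mathfrak{p}^\pi\sharp\mu)-V(0,\mu)| \le |V_0^M(\mathfrak{p}^\pi\sharp\mu)-\widehat V_0(\mathfrak{p}^\pi\sharp\mu)| + |\widehat V_0(\mathfrak{p}^\pi\sharp\mu)-V(0,\mu)|.
\]
I will bound the second term directly by Corollary~\ref{thm:hat_v_minus_v}, which gives $Ch^{1/4}$ with $C$ independent of $\pi$ under Assumptions~\ref{ass assum:L2_lip_b_sigma}, \ref{assum:terminal_cost_f_g_Phi} and~\ref{ass assump:Hstruct}.

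For the first term I will apply Theorem~\ref{thm:particle_approx} with the discrete initial law $\mathfrak{p}^\pi\sharp\mu\in\cP((\R^d)^{N+1})$. This needs two checks. First, $\mathfrak{p}^\pi\sharp\mu$ must have a finite $q$-th moment. The stated hypothesis $\sE[|\xi^m_0|^q]\le c$ only concerns the initial point. However, in the dynamics \eqref{eq de euler scheme} and \eqref{eq:ParticleSystem} started at $t_0=0$, only $\xi_0$ enters; the remaining grid values are generated by the scheme. The $q$-moment of the full discrete path is therefore controlled by Proposition~\ref{pr prop:moment_estimate_first_euler}(iii) with exponent $q$. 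Second, the dimension condition of Lemma~\ref{lemma:wasserstein_bound} requires $\tilde d=(N+1)d\ge 5$, and this is assumed.

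Theorem~\ref{thm:particle_approx} then yields $CM^{-\gamma(q,d)/2}$ with $\gamma/2=\min\{1/((N+1)d),(q-2)/(2q)\}$, which is exactly the exponent in the statement.

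The main obstacle is uniformity of the constant. The statement claims $C$ independent of $h$ and $M$, but $N=T/h$ enters both the dimension in Fournier--Guillin and the constant of Lemma~\ref{lemma:wasserstein_bound}. That constant depends on $\tilde d$ and on the $q$-moment of the law on $\R^{(N+1)d}$, which grows roughly like $(N+1)^{q/2}$ in the Euclidean norm. In the target bound the $M$-term also carries no constant.

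I would handle this by measuring paths through the normalized discrete $L^2$ norm $\big(h\sum_k|x_{t_k}|^2\big)^{1/2}$ rather than the raw Euclidean norm. This is the norm that appears in Lemma~\ref{lemma:delta_X_B_Sigma} via Lemma~\ref{le prop interpol}(i), and in it the $q$-moment is bounded uniformly in $N$ by Proposition~\ref{pr prop:moment_estimate_first_euler}(iii). After this normalization, Fournier--Guillin is applied to a law with uniformly bounded moments. Any remaining $N$-dependence of the constant would be absorbed into the exponent, since the rate $M^{-1/((N+1)d)}$ already encodes the dimension. If that absorption fails, I would state the result with $C$ depending on $N$ in the $M$-term, which is what the proof of Theorem~\ref{thm:particle_approx} actually delivers.

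Adding the two bounds completes the argument.
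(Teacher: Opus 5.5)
Your argument is the paper's: Theorem~\ref{thm:final_thm} is obtained there as a direct consequence of Corollary~\ref{thm:hat_v_minus_v} and Theorem~\ref{thm:particle_approx}, via exactly your triangle inequality through $\widehat V_0(\mathfrak{p}^\pi\sharp\mu)$. Your observation that at $t_0=0$ only $\xi_0$ enters is correct and fills a step the paper leaves implicit: together with $\sE[|\xi_0^m|^q]\le c$ and Proposition~\ref{pr prop:moment_estimate_first_euler}(iii), it supplies the $q$-moment that Lemma~\ref{lemma:wasserstein_bound} needs, even though $\mu$ is only assumed in $\cP_{\infty,2}$.

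One sentence in your uniformity discussion needs correcting. The concern itself is legitimate, and the paper does not resolve it. It simply inherits the claim that $C$ is independent of $h$ from Theorem~\ref{thm:particle_approx}, whose proof invokes Lemma~\ref{lemma:wasserstein_bound} (Fournier--Guillin) in dimension $\tilde d=(N+1)d$. That lemma's constant depends on $\tilde d$, and nothing in it makes the constant bounded as $\tilde d\to\infty$.

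Your normalization $\big(h\sum_k|x_{t_k}|^2\big)^{1/2}$ is the right bookkeeping for the moment factor. By Lemma~\ref{le prop interpol}(i) it dominates, up to the factor $T^{-1/2}$, the $\|\cdot\|_2$ metric inside the $\cWtwo$ used in Lemma~\ref{lemma:delta_X_B_Sigma}. However, it does not touch the dimension dependence of the Fournier--Guillin constant.

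That dependence cannot be ``absorbed into the exponent''. For fixed $M$, $M^{-1/((N+1)d)}\to 1$ as $h\to 0$, so there is no decay left in which a prefactor growing with $N$ could be hidden. Absent a dimension-explicit version of Lemma~\ref{lemma:wasserstein_bound}, your fallback is what the available arguments support, for the paper's proof as much as for yours: a constant depending on $N$ in the $M$-term. Note also that the statement omits any constant in front of $M^{-\min\{\cdot\}}$, presumably a typo.
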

\subsection{Neural network approximation}

We parametrise the Brownian-increment policy $\psi^W \in \mathcal{M}^W$ \eqref{eq:bar_alpha_W_particle} and state-trajectory policy $\psi^X \in \mathcal{M}^X$ \eqref{eq:bar_alpha_X_particle}  using a residual network with weights $\theta$.
At each time step \(t_n\), the feedback policies use only information available up to \(t_n\). Thus, for particle \(m\in[M]\), the feature vectors for Brownian-increment feedback and state-trajectory feedback are given respectively by
\(
x^{W,m}_n \coloneqq (t_n,\xi^m_0,(\Delta W^m_k)_{1\le k\le n}),
\)\footnote{We note that \(X_{t_n}^m\) is already determined by \(\bigl(\xi_0^m,(\Delta W_k^m)_{1\le k\le n}\bigr)\), so including \(X_{t_n}^m\) in the input does not add new information. For implementation purposes, however, we allow an explicit dependence on \(X_{t_n}\) by using the input vector \(x_n^{W,m} = \bigl(t_n,(\Delta W_k)_{k\le n},X_{t_n}\bigr)\).} and \(
x^{X,m}_n \coloneqq (t_n,X^m_{t_0},\ldots,X^m_{t_n}).
\)
 At each time step \(n\in[N]\), the network maps the feature vector \(x_n^m\in\mathbb{R}^{d_0}\) to the control \(\psi_\theta(x_n^m)\). To minimize the particle objective \(J^M(0,\mu,\psi_\theta)\), we simulate the system forward using an Euler scheme, sampling i.i.d.\ initial states and Brownian increments at each iteration. The empirical loss is averaged across particles, and \(\theta\) is updated via Adam using gradients from automatic differentiation. The algorithm is given in Algorithm~\ref{alg:policy_gradient}.

\begin{algorithm}[h]
\begin{algorithmic}
\caption{Policy gradient training for path-dependent mean-field control}
\label{alg:policy_gradient}
\Require initial law \(\mu\), grid \(\pi\), batch size \(B\), particles \(M\), learning rate \(\eta\), iterations \(K\)
\State Initialize \(\theta\) for the neural network  $\psi_\theta$
\For{\(k=1,\dots,K\)}
    \State Sample \(B\) batches of \(M\) particles with initial states \(\xi_0^m\sim\mu\) and Brownian increments \(\Delta W_n^m\)
    \State Simulate the particle system on \(\pi\) using controls
    \(
        \bar\alpha_n^m=\psi_{\theta,n}(x_n^m),
    \)
    where \(x_n^m\) is formed from the particle history
    \State Compute
    \(
    L(\theta)
    =
    \frac{1}{B}\sum_{j=1}^B
    \frac{1}{M}\sum_{m=1}^M
    \left(
    \sum_{n=0}^{N-1} f(\cdot)h+g(\cdot)
    \right)
    \)
    \State Update \(\theta \gets \theta-\eta\nabla_\theta L(\theta)\)
\EndFor
\State \Return \(\theta\)
\end{algorithmic}
\end{algorithm}

\section{Numerical Experiments}\label{sec:numerics}

In this section, we illustrate the proposed method on a path-dependent McKean--Vlasov control problem. 

\newcommand{\eps}{\epsilon}

\subsection{Linear quadratic path-dependent MKV problem}
We consider a 
linear quadratic system governed by the following state dynamics:
\begin{equation}\label{eq:dX_in_numerics}
  \d X_t = (a X_t +c \sE[X_t] +  \alpha_t )\, \d t + \sigma \, \d W_t,
  \qquad
  X_0 \sim \mathrm{Unif}[x_0-\eps_0, x_0 + \eps_0]
\end{equation}
for some $\eps_0\geq0$, with $X_t \in \sR^d$ and $W_t \in \sR^{\bmdim}.$

The goal of the control problem is to drive the path-averaged state
\(
Y_t = \int_0^t e^{-\lambda(t-s)} X_s \, \d s,
\)
toward a prescribed target level $K>0$ at some terminal time $T$. The corresponding objective functional is defined as
\begin{equation}\label{eq:objective_AT}
    J(\alpha)
    = \mathbb{E}\!\left[
        \int_0^T{ \frac{\eta}{2}}\, \alpha_t^2 \, \mathrm{d} t   \right]
        + \frac{1}{2} \mathbb{E}\bigg[ \lvert Y_T - K \rvert ^2\bigg]
        + \frac{\gamma}{2} \mathrm{Var}\left( Y_T \right).
\end{equation}
In Appendix \ref{appendix:LQ}, we present its theoretical solution via the associated HJB equations and the Riccati ODEs. 

\paragraph{Order of Convergence}
In Figure~\ref{fig:lq_nn_combined_results}, we compare the numerical results obtained from the control
$\psi_{\theta}^W\bigl(t_n, (\Delta W_k)_{k\leq n}, X_{t_n}\bigr)$ with the reference solution computed by numerically solving the Riccati ODE.

In Figure~\ref{fig:val_loss_E5}, we first report the validation loss during training for different time discretizations with $N=4$, $8$, $16$, and $32$. The theoretical benchmark, given by the optimal solution of the Riccati equation, is shown in purple. As $N$ increases, the numerical results converge toward the optimal solution.  Figure~\ref{fig:ord_of_cvgn} estimate the convergence order. The results indicate a rate around $1$, faster than $1/4$ as given in Theorem~\ref{thm:final_thm}, which is mainly due to the additional regularity and structure of the linear-quadratic problem.

In Figure~\ref{fig:lq_vs_nn_traj}, we compare the mean trajectories of the $X$ and $Y$ processes with time step $N=32$, using both the optimal Riccati solution and Algorithm~\ref{alg:policy_gradient} with the control parametrization $\psi_\theta^W$. The proposed algorithm performs very close to the theoretical benchmark: the mean of $Y_T$ is close to the target value $K$, and the variance is comparable to that of the optimal solution.

\begin{figure}
    \centering
    \begin{subfigure}{0.32\linewidth}
        \centering
        \includegraphics[width=\linewidth]{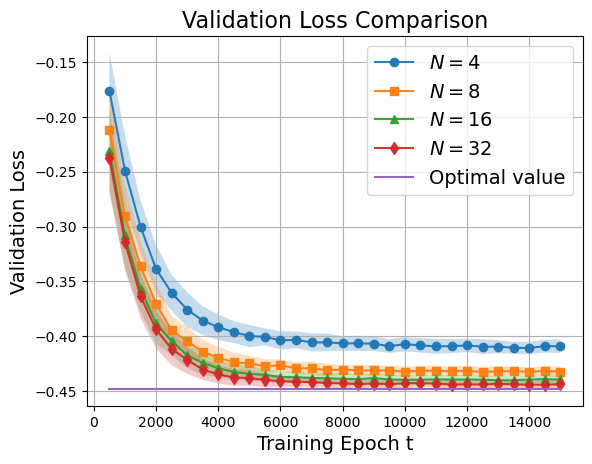}
        \caption{Validation loss  under the control $\psi_\theta^W$ with different time discretization step $h=T/N$.}
        \label{fig:val_loss_E5}
    \end{subfigure}
    \hfill    
    \begin{subfigure}{0.32\linewidth}
        \centering
        \includegraphics[width=\linewidth]{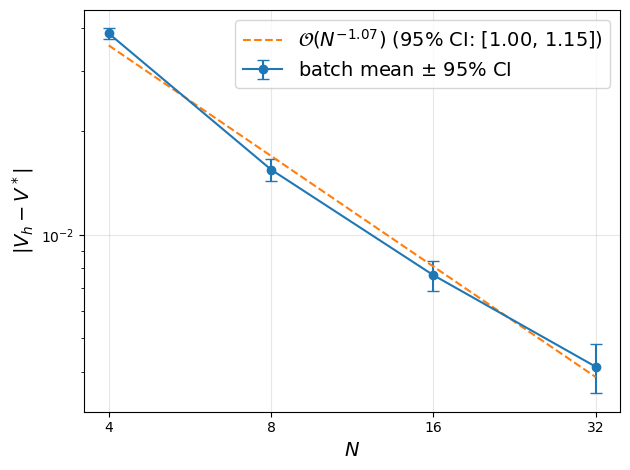}
        \caption{Order of convergence of $V_h - V^*$ under the control $\psi_\theta^W$, with time discretization step $h=T/N$.}
        \label{fig:ord_of_cvgn}
    \end{subfigure}
    \hfill
    \begin{subfigure}{0.32\linewidth}
        \centering
        \includegraphics[width=\linewidth]{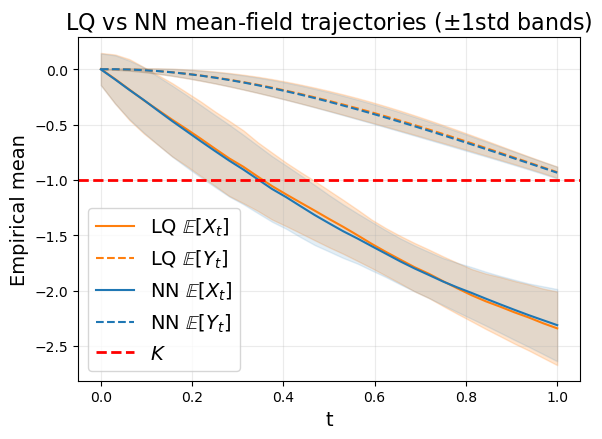}
        \caption{Mean and 1 std of the samples $X$ and $Y$ trajectories under the theoretical LQ  solution (orange) and $\psi_\theta^W$ (blue).}
        \label{fig:lq_vs_nn_traj}
    \end{subfigure}
\caption{Comparison between the theoretical solution and Algorithm~\ref{alg:policy_gradient} with control parametrization $\psi_\theta^W$  ($T=1$, $d=1$, $d^w=1$, $a=0.6$, $c=0$ , $\sigma=1$, $x_0=0$, $\epsilon_0=0.25$, $K=-1$, $\lambda=1$, $\eta=0.02$, $\gamma=2;$ for (c), $N=32$).}
    \label{fig:lq_nn_combined_results}
\end{figure}

\begin{figure}
    \centering
    \includegraphics[width=0.9\linewidth]{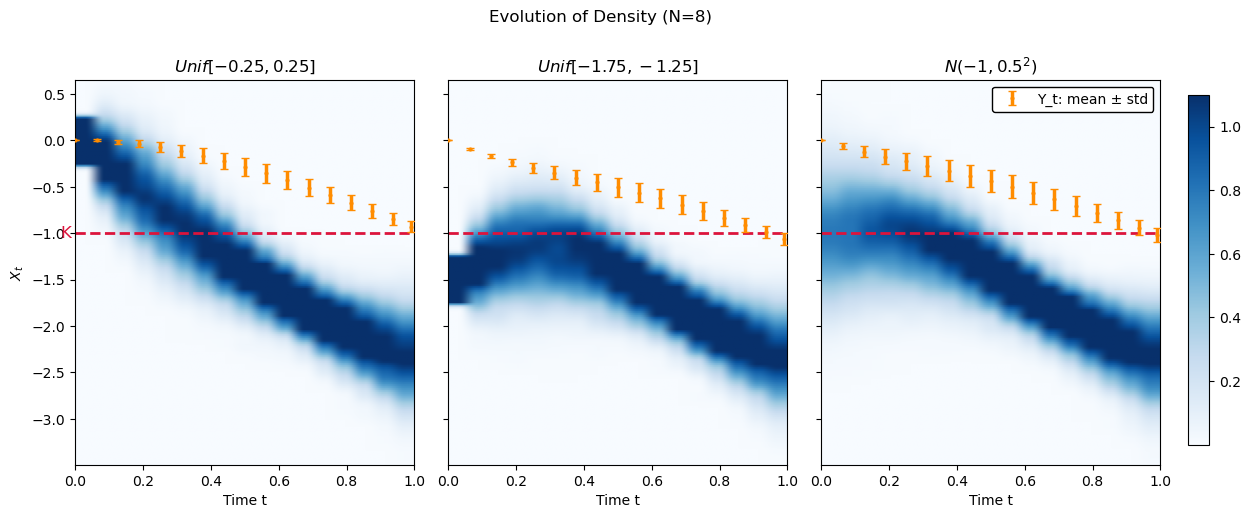}
    \centering
    \caption{
    Generalization test under three different initial distributions using the same trained neural network $\psi_\theta^W$. The evolution of $X$ is shown in blue, while the orange markers indicate the mean and standard deviation of $Y_{t_k}$. The hyperparameter settings are the same as in Figure \ref{fig:lq_vs_nn_traj}, with $N=8$.
    }
    \label{fig:gen_test_lq}
\end{figure}

\paragraph{Generalization test}
We next evaluate the performance of $\psi_\theta^W$ under different initial distributions. In Figure~\ref{fig:gen_test_lq}, the left panel uses the same initial distribution as in training, $\mathrm{Unif}[-0.25,0.25]$. The middle panel shifts the initial distribution downward by $1.5$, so that its support lies below the target value $K=-1$. The right panel replaces the uniform distribution with a normal distribution centered at the target value $K=-1$. As shown in all three cases, although $\psi_\theta^W$ is trained only with the initial distribution, it successfully drives $Y_T$ close to the target value $K$ and maintains a small terminal variance, demonstrating the generalization capability of the neural-network-based control in the path-dependent mean field setting. 

\subsection{Comparison with other control inputs in a non-smooth landscape}\label{ssec:numeric_compare}
We compare $\psi_\theta^W$ with two other types of controls. The first is $\psi_\theta^X$, defined in \eqref{eq:bar_alpha_X_particle}, which takes as input the history of state trajectory $X$. We also consider a Markovian control that depends only on the current state at time $t$, denoted by $\psi^{\text{Markov}}_\theta(t_n, X_{t_n})$. In Figure \ref{fig:loss_3compare}, we use a different loss function from \eqref{eq:objective_AT}, replacing the quadratic terminal term $|Y_T-K|^2$ with the absolute value $|Y_T-K|$. Since this term is small, the absolute-value loss highlights the differences between the control schemes more clearly and also tests the performance of the algorithm beyond the linear-quadratic setting, in a non-smooth optimization problem. Figure \ref{fig:loss_3compare} shows that $\psi_\theta^W$ and $\psi_\theta^X$ achieve the same loss value, as proved in Theorem \ref{thm:same-value-on-fdbck-ctrl}. In contrast, the Markovian control $\psi_\theta^{\text{Markov}}$ does not reach the same loss level, reflecting the more limited information available to it.
\begin{figure}
    \centering
    \includegraphics[width=0.35\linewidth]{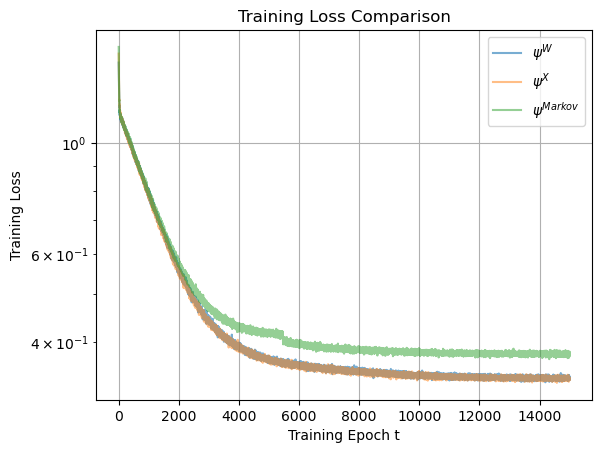}
    \includegraphics[width=0.35\linewidth]{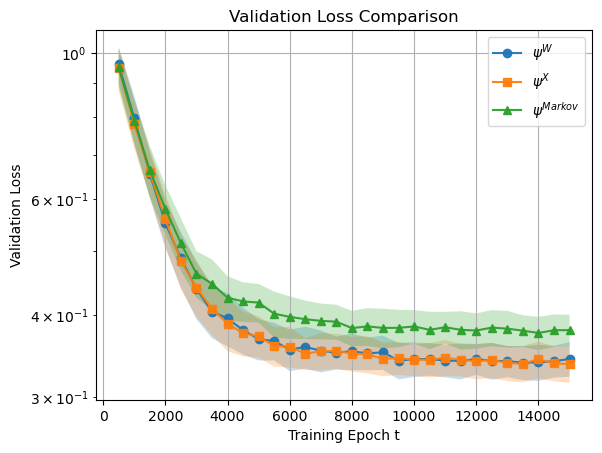}
    \caption{Training loss (left) and validation loss (right) of three controls: $\psi^W_\theta$, $\psi^X_\theta$, and $\psi^{\text{Markov}}_\theta$ ($T=1$, $d=1$, $d^w=1$, $a=0.6$, $c=-1.2$, $\sigma=1$, $x_0=0$, $\epsilon_0=0.1$, $\lambda=0$, $\eta=0.02$, $\gamma=1$; $N=4$).}
    \label{fig:loss_3compare}
\end{figure}

\section{Detailed Proofs}\label{sec:proofs}

\subsection{Proofs in Section \ref{sec:continuous_sde}}

\subsubsection{Proof of Proposition \ref{prop:X_square_bound}} \label{ssec:prop:X_square_bound}
The proof of $(i)$ and $(ii)$ follows directly from \cite{cosso2023optimal}.

We now prove part~$(iii)$.
     Fix $ 0 \leq h' \leq h$ and $u \in [t,T],$ and w.l.o.g assume that $u + h' \leq T.$  
     To ease the notation, we denote $\cX=\cX^{t, \xi,\a}$, $b_s = b(s, \cX_{\cdot \wedge s}, \cL(\cX_{\cdot \wedge s}), \a_s)$ and $\sigma_s  = \sigma(s, \cX_{\cdot \wedge s}, \cL(\cX_{\cdot \wedge s})). $
     For any $h' \in [0,h]$, 
     $
     \cX_{u + h'} - \cX_u = \int_u^{u+h'} b_s \d s + \int_u^{u+h'} \sigma_s \d W_s,
     $
   which implies that
     \begin{align}\label{eq:X_minus_X_q}
         \sE\left[ \sup_{h'\in[0,h]} |\cX_{u+h'} - \cX_u|^q\mid \cF_u\right]
     &    \leq 
         2^{q-1} \Big(
         \sE\left[\sup_{h'\in [0,h]}\left|\int_{u}^{u + h'} b_s \d s\right|^q \mid\cF_u\right] \notag \\
         &\qquad +   \sE\left[\sup_{h'\in [0,h]}\left| \int_{u}^{u+h'} \sigma_s \d W_s\right|^q\mid\cF_u\right] \Big).
     \end{align}
     By H\"older's inequality, $\left|\int_{u}^{u+h'} b_s \d s\right| ^q \leq h^{q-1} \int_u^{u+h}|b_s|^q \d s$,  hence, 
    \begin{align}\label{eq:int_bs}
       \sE\left[ \sup_{h'\in [0,h]}\left|\int_{u}^{u+h'} b_s \d s\right|^q \mid \cF_u\right] \leq h^{q-1} 
       \sE \left[ \int_u^{u+h} |b_s|^q \d s \mid \cF_u \right] \leq h^{q-1}  \int_u^{u+h} \sE \left[ |b_s|^q \mid \cF_u \right] \d s .
     \end{align}
     By Assumption \ref{ass assum:L2_lip_b_sigma} and \eqref{eq:wasserstein},
     \begin{equation}\label{eq:bs_q_bound}
              \begin{aligned}
         \sE \left[ |b_s|^q \mid \cF_u \right] & \leq C \sE\left[1+\|\cX\|_{L^\infty(s)}^q + \cWinf (\cL(\cX_{\cdot \wedge s}),\delta_0)^q\mid\cF_u\right] \\
         &\leq C \left( 1+ \sE \left[ \|\cX_{\cdot \wedge s}\|_\infty^q\mid\cF_u\right] + \sE[\|\cX_{\cdot \wedge s}\|_\infty^q] \right).
     \end{aligned}
     \end{equation}
     Moreover, by the BDG inequality (cf.\ \cite[Theorem 1.111]{fabbri2017stochastic}),
     \begin{equation}\label{eq:int_sigma}
             \begin{aligned}
        &\sE\left[\sup_{h'\in [0,h]} \left| \int_{u}^{u+h'} \sigma_s ~\d W_s\right|^q \mid \cF_u\right] 
         \leq h^{\frac{q}{2}-1} \sE \left[\int_u^{u+h} \| \sigma_s \|_F^{q}~\d s \mid\cF_u\right] \\
        & \leq C h^{\frac{q}{2}-1}  \int_u^{u+h} 1+ \sE \left[ \|\cX_{\cdot \wedge s}\|_\infty^q\mid\cF_u\right] + \sE[\|\cX_{\cdot \wedge s}\|_\infty^q] ~\d s .
     \end{aligned}
     \end{equation}
Therefore, by applying \eqref{eq:int_bs}, \eqref{eq:bs_q_bound}, and \eqref{eq:int_sigma} to \eqref{eq:X_minus_X_q} and taking expectation, we obtain
     \begin{align*}
   \sE\left[  \sup_{h' \in [0, h]}|\cX_{u+h'} - \cX_u|^q \right]
         &\leq   C h^{\frac{q}{2}-1} \int_u^{u+h} 1+\sE \left[ \|\cX_{\cdot \wedge s}\|_\infty^q \right] \d s \leq 
          C h^{\frac{q}{2}} \left( 1 + \|\xi_{\cdot \wedge t}\|_{\bS_q}^q\right),
     \end{align*}
     where the last inequality follows from \eqref{eq:bound_X_square}.
\eproof

\subsection{Proofs in Section \ref{sec:Euler}}
\subsubsection{Proof of Lemma \ref{le prop interpol}}\label{ssec:proof_le prop interpol}   
Take $x \in \pathspace$.
For any $k = 0, 1, \cdots, N-1,$ 
since $t\rightarrow|\upl x_t |^2 = \left| x_{t_k} + \frac{t-t_k}{h} (x_{t_{k+1}} - x_{t_k})\right|^2$ is convex in $t$ on $[t_k,t_{k+1}]$,
we have
\begin{align*}
    \int_{t_k}^{t_{k+1}}| \upl x_t |^2 \d t 
        & \leq  \int_{t_k}^{t_{k+1}}
         \bigg( \frac{t_{k+1}-t}{h} |\upl x_{t_k}|^2 + \frac{t-t_k}{h} |\upl x_{t_{k+1}}|^2 \bigg) \d t  =  \frac{h}{2}  (|x_{t_k}|^2 +  |x_{t_{k+1}}|^2).
\end{align*}

By summing up the above inequality, we conclude to $(i)$.

Fix $\xi \in \bS_2(\sF)$ and $\a \in \cA$. To ease the notation, we write $\cX \coloneq \cX^{0,\xi,\a}.$
For any $t\in[0,T],$ $
    |\upl \cX_t - \cX_t| \leq \max \{ |\cX_{\n(t)} - \cX_t|, |\cX_{\n(t)+1} - \cX_t| \}.
$
Therefore, by Proposition \ref{prop:X_square_bound}, 
$
\sE\left[|\upl \cX_t - \cX_t|^2 \right] \leq C h,
$
which yields $(ii)$.
\eproof
\subsubsection{Proof of Proposition \ref{pr prop:moment_estimate_first_euler}} \label{ssec:proof_first_euler_prop}
Fix $\a \in \cA^\pi, t\in\pi, \xi \in \mathbf{S}_q(\sF).$
To ease the notation, denote $ \bar \cX = \bar \cX^{t,\xi,\a}.$

    Since the Euler update \eqref{eq de euler scheme} 
defines $\bar{\mathcal{X}}_{t_{n+1}}$ as a deterministic function of 
$\big(\bar{\mathcal{X}}_{t_n}, {\alpha}_n, \Delta W_{n+1}\big)$, 
it follows that, given the initial condition, the control sequence, 
and the Brownian path, the entire trajectory is uniquely determined. 
In particular, $\bar{\mathcal{X}}$  enjoys pathwise uniqueness, 
which in turn guarantees the flow property (cf. Proposition \ref{pr prop:moment_estimate_first_euler}(ii)).

\paragraph{Bound $\|\bar \cX_{\cdot \wedge t_n} \|_{\mathbf{S}_q}$} To ease the notation, $b_{t_n} \coloneqq b(t_n ,  \upl \bar \cX_{\cdot\wedge t_n}, \cL( \upl \bar \cX_{\cdot \wedge t_n}), \a_n),$ and $\sigma_{t_n} \coloneqq \sigma(t_n , \upl \bar  \cX_{\cdot\wedge t_n}, \cL(\upl \bar\cX_{\cdot \wedge t_n})).$
Consider the auxiliary continuous process that coincides $\{ \bar \cX_{t_n}\}_n$ on $\pi$, as defined in \eqref{eq de euler scheme cont}.

By \eqref{eq de euler scheme cont}, for any $t\leq t' \leq T$,
\begin{align*}
    \sE \left[ \sup_{0 \leq u \leq t'} \left| \bar \cX_{u } \right|^q \right] &\leq C \left(  \sE\left[ \sup_{0\leq u \leq t'} \left| \int_t^{u \vee t} b_{\n(s)} \d s\right|^q \right] + \sE\left[\sup_{u \leq t'} \left| \int_t^{u\vee t} \sigma_{\n(s)} \d W_s\right| ^q \right]  +  \sE\left[|\xi_{u \wedge t}|^q\right] \right)\\
    &\leq C   \left(  \sE\left[ \int_t^{t'}  \left| b_{\n(s)} \right|^q\d s \right] + \sE\left[ \int_t^{t'} \left\|\sigma_{\n(s)} \right\|_F ^q\d s \right]  +  \|\xi_{\cdot \wedge t}\|^q_{\mathbf{S}_q}\right) \\
    &\leq C \left( \int_t^{t'} 1+\sE\left[\left\| \bar \cX_{\cdot \wedge \n(s)} \right\|_\infty^q\right] \d s +  \| \xi_{\cdot \wedge t}\|^q_{\mathbf{S}_q}\right) 
\end{align*}
  where the second inequality follows from the Holder inequality and BDG inequality (cf. \cite[Theorem 1.111]{fabbri2017stochastic}), and the last inequality follows from Assumption \ref{ass assum:L2_lip_b_sigma} and \eqref{eq:wasserstein}.


   Thus, for $t=t_0$ and $t'=t_{n+1}$, by applying the above inequality we obtain
\(
\|\bar \cX_{\cdot \wedge t_{n+1}}\|_{\mathbf S_q}^q
\le C(1 + h \sum_{i=0}^n \|\bar \cX_{\cdot \wedge t_i}\|_{\mathbf S_q}^q
+ \|\xi_{\cdot \wedge t}\|_{\mathbf S_q}^q).
\)
We now localize with the stopping time
\(
\tau_R := \inf\bigl\{t\in[0,T]: \|\bar \cX_{\cdot\wedge t}\|_{\mathbf S_q}\ge R\bigr\}\wedge T.
\)
Applying the same estimate to the stopped process $\bar \cX_{\cdot\wedge\tau_R}$, we may use Gronwall's inequality on $[0,\tau_R]$. The resulting bound is uniform in $R$. Finally, letting $R\to\infty$ and using Fatou's lemma yields
\(
\|\bar \cX_{\cdot \wedge t_n}\|_{\mathbf S_q}^q
\le C\Bigl(1+\|\xi_{\cdot \wedge t}\|_{\mathbf S_q}^q\Bigr),
\text{ for any } n=0,1,\dots,N. 
\) 
     \eproof

\subsection{Proofs in Section \ref{sec:particle_approx}}\label{sssec:proof_of_thm_particle_approx}

\subsubsection{Proof of Lemma \ref{lemma:delta_X_B_Sigma}}
We denote $\delta X^m=\bar{\cX}^m - X^m$, for any $1\leq m\leq M$.
For each $n = 1,2,\cdots, N$, we also denote 
 $ \delta b_n^m =b\left(t_n, \bar\cX_{\cdot \wedge t_n}^{m}, \cL\left(\bar\cX_{\cdot \wedge t_n}^{m}\right), \bar\alpha^{W,m}_n\right)  -  b\left(t_n, X^m_{\cdot \wedge t_n}, {\mu}_{t_n}^M, \bar\alpha^{W,m}_n\right)$, and $
  \delta \sigma_n^m = \sigma\left(t_n, \bar\cX_{\cdot \wedge t_n}^{m}, \cL\left(\bar\cX_{\cdot \wedge t_n}^{m}\right)\right) - \sigma\left(t_n, X^m_{\cdot \wedge t_n}, {\mu}_{t_n}^M\right).$
  We also denote $  \overline{ \mathfrak{B}}_n \coloneqq \sE \left[ b\left( t_n, \bar\cX^m_{\cdot \wedge t_n},\cL (\bar\cX^m_{\cdot \wedge t_n}), \bar\alpha^{W,m}_n\right)-b\left(t_n, \bar\cX^m_{\cdot \wedge t_n}, \bar\mu_{t_n}^M, \bar\alpha^{W,m}_n\right) \right]$, and $\overline{\mathfrak{S}}_n$ similarly.

Then we have $\delta X_{t_{n+1}}^m=\delta X_{t_n}^m + \delta b_n^m h+\delta \sigma_n^m \Delta W_{n+1}^m,$ and
\begin{equation}\label{eq:delta_X}
\begin{aligned}
     \mathbb{E}[|\delta X_{t_{n+1}}^m|^2]
&= \sE[ |\delta X_{t_n}^m|^2 ]
+ 2 h \sE \left[ \left(\delta X_{t_n}^m \right)^\top \delta b_n^m\right]
+ h^2 \sE \left[  |\delta b_n^m|^2 \right] + \sE [|\delta \sigma_n ^m \Delta W_{n+1}|^2] \\
     &\leq (1+ h)\mathbb{E}\left[\left|\delta X_{t_n}^m\right|^2\right]+  ( h+h ^2) \mathbb{E}\left[\left| 
     \delta b_n^m\right|^2 \right] +  {h}\sE \left[ \left\|\delta \sigma_n^m \right\|_F^2 \right]. 
\end{aligned}
\end{equation}

Now, note that
$$
\begin{aligned}
&\sE\left[ \left|\delta b_n^m\right| ^2\right]
 = \sE \left[ \left|b\left(t_n, \bar\cX_{\cdot \wedge t_n}^{m}, \cL\left(\bar\cX_{\cdot \wedge t_n}^{m}\right), \bar\alpha^{W,m}_n\right)  -  b\left(t_n, X^m_{\cdot \wedge t_n}, {\mu}_{t_n}^M, \bar\alpha^{W,m}_n\right)\right| ^2 \right]\\
& \leq 2 \left(  \overline{\mathfrak{B}}_n + \sE \left[ \left|b\left(t_n, \bar\cX^m_{\cdot \wedge t_n}, \bar\mu_{t_n}^M, \bar\alpha^{W,m}_n\right) -b\left(t_n, X^m_{\cdot \wedge t_n}, \mu_{t_n}^M, \bar\alpha^{W,m}_n\right)\right|^2 \right] \right).
\end{aligned}
$$
By Assumption \ref{ass assum:L2_lip_b_sigma}, we know that the second term in the above inequality is upper bounded by
$$
\begin{aligned}
   C \sE \left[ \frac{1}{N}\sum_{i=0}^n \left|\delta X_{t_i}^m\right| ^2 +\frac{N-n}{N} |\delta X_{t_n}^m|^2 + \cW_2 \left(\bar\mu_{t_n}^M, \mu_{t_n}^M\right)^2\right].
\end{aligned}
$$
Moreover,
$$
\begin{aligned}
      \cW_2 \left(\bar\mu_{t_n}^M, \mu_{t_n}^M\right)^2& \leq \sE\left[ 
  \frac{1}{M} \sum_{k=1}^M \|\bar\cX^k_{\cdot \wedge t_n} -  X^k_{\cdot \wedge t_n} \|_{L^2(t_n)}^2\right] \\
  &\leq   \frac{1}{ M}  \sum_{k=1}^M \left(\frac{1}{N} \sum_{i=0}^n\sE \left[ |\delta X_{t_i}^k |^2 \right] + \frac{N-n}{N}\sE  \left[ |\delta X_{t_n}^k |^2 \right]\right)
\end{aligned}
$$
 We then conclude that
\begin{equation}\label{eq:delta_b}
\begin{aligned}
     \mathbb{E}\left[\left|\delta b_n^m\right|^2\right]
    &\leq C \left(  \overline{\mathfrak{B}}_{n} + 
  \frac{1}{N} \sum_{i=0}^n\sE \left[ |\delta X_{t_i}^k |^2 \right] + \frac{N-n}{N}\sE  \left[ |\delta X_{t_n}^k |^2  \right] \right) ,
\end{aligned}
\end{equation}
by noting that $\{\delta X^k\}_{k=1}^M$ are i.i.d.
Similar arguments lead to
\begin{equation}\label{eq:delta_sigma}
     \mathbb{E}\left[\left\|\delta \sigma_n^m\right\|_F^2\right] \leq
     C \left( \overline{\mathfrak{S}}_{n} +
  \frac{1}{N} \sum_{i=0}^n\sE \left[ |\delta X_{t_i}^k |^2 \right] + \frac{N-n}{N}\sE  \left[ |\delta X_{t_n}^k |^2 \right] 
  \right).
\end{equation}
Combining \eqref{eq:delta_b}, \eqref{eq:delta_sigma} with \eqref{eq:delta_X}, one obtains
\begin{equation*}
    \begin{aligned}
         \mathbb{E}\left[\left|\delta X_{t_{n+1}}^m\right|^2\right] &\leq 
         (1+ C h)\mathbb{E}\left[\left|\delta X_{t_n}^m\right|^2\right] 
    +  Ch  \left( \overline{\mathfrak{B}}_n + \overline{\mathfrak{S}}_n
   +   \frac{1}{N} \sum_{i=0}^n\sE \left[ \left| \delta X_{t_i}^m \right|^2 \right] 
   \right).
\end{aligned}
\end{equation*}
Taking maximum over the both sides, and denoting $y_n = \max_{i\leq n} \sE[|\delta X_{t_{i}}^m|^2]$,
we then have
$
y_{n+1} \leq y_n + Ch \left(\max_{i\leq n} (\overline{\mathfrak{B}}_i + \overline{\mathfrak{S}}_i  )+ y_n\right).
$
By Gronwall's inequality, 
$y_n \leq C\max_{i\leq n} (\overline{\mathfrak{B}}_i + \overline{\mathfrak{S}}_i  ). $ The proof is finished by noting Assumption \ref{ass assum:L2_lip_b_sigma}. 
\eproof

\subsubsection{Proof of Theorem \ref{thm:particle_approx}}
 Fix a control $\psi^W \in \cM^W$ and let $(\bar\alpha^{W,m}_n)_{n \geq 0}$ be given by \eqref{eq:bar_alpha_W_particle}. To ease the notation, we write $\bar X^{\xi^m,W}$ as $\bar{X}^m$, and $X^{\xi^m,W}$ as $X^m.$

Note that for any $1 \leq m\leq M$,
\begin{equation}\label{eq:g_minus_bar_g}
        \begin{aligned}
        & \sE \left[ \left| \Terminal\left(X^{m},  \mu_{t_N}^M\right)  - \Terminal\left(\bar\cX^{m}, \mathcal{L} \left(\bar\cX^{m}\right)\right)\right| \right]\\
        &\leq \sE \left[ \left| \Terminal( X^m, \cL(\bar\cX^m)) - \Terminal(
        \bar\cX^m, \cL(\bar\cX^m))\right|\right] 
        + \sE \left[ \left| \Terminal(  X^m, \cL( \bar\cX^m)) - \Terminal(
         X^m,   \mu_{t_N}^M)\right|\right],
        \end{aligned}
        \end{equation}
        where the first term can be bounded by Lemma \ref{lemma:delta_X_B_Sigma}, 
        Proposition \ref{pr prop:moment_estimate_first_euler} and \eqref{eq:moment_bound_particle_approx},
        \begin{equation}\label{eq:g_minus_bar_g1}
            \begin{aligned}
            &  \sE \!\left[ \left| \Terminal( X^m, \cL(\bar\cX^m)) - \Terminal(
        \bar\cX^m, \cL(\bar\cX^m))\right|\right]\\
        &\leq C \Big(1+ \sE\left[\|X^m\|_\infty^2\right]^{\frac{1}{2}}
          + \sE\left[\|\bar\cX^m \|_\infty^2 \right]^{\frac{1}{2}} \Big)
          \sE \!\left[ \left\| X^m - \bar\cX^m\right\|_{2} ^2\right]^{\frac{1}{2}} \\
        &\leq C \max_{1\leq n \leq N}
          \sE\!\left[ \cWtwo\left(\cL (\bar\cX^m_{\cdot \wedge t_n}), \bar\mu_{t_n}^M\right)^2 \right]^{\frac{1}{2}}
          \leq C M^{- \frac{\gamma(q,\tilde{d})}{2}},
            \end{aligned}
        \end{equation}
      where the last inequality follows from Lemma \ref{lemma:wasserstein_bound} with $\tilde {d} \coloneqq (N+1) d$.
To bound the second term in \eqref{eq:g_minus_bar_g},
by Assumption \ref{assum:terminal_cost_f_g_Phi}, 
        \begin{equation*}
      \begin{aligned}
   &\sE \left[ \left| \Terminal( X^m, \cL( \bar\cX^m)) - \Terminal(
         X^m,   \mu_{t_N}^M)\right|\right]\\   
    &\leq C 
        \sE \left[ \cW_{2, L^2}\left(\cL({\bar\cX^m}), \mu_{t_N}^M\right)\cdot \left(1+ \|X^m \|_{\infty} + \cWinf(\cL(X^m), \delta_0 ) + \cWinf(\mu^M_{t_N}, \delta_0)\right)\right]\\
        &\leq  C
\left( \sE \left[\cW_{2, L^2}\left(\cL({\bar\cX^m}),  \bar\mu_{t_N}^M\right) ^2 \right]^{\frac{1}{2}} + \sE \left[\cW_{2, L^2}\left(\bar\mu_{t_N}^M, \mu_{t_N}^M\right) ^2 \right]^{\frac{1}{2}}\right) \\
&\qquad
        \cdot\sE\Big[ 1+ \|X^m \|_{\infty}^2 + \cWinf(\cL(X^m), \delta_0 )^2 + \cWinf(\mu^M_{t_N}, \delta_0)^2 \Big]^\frac{1}{2} \\
& \overset{(a)}{\leq} C \Big( \sE \left[\cWtwo\left(\cL({\bar\cX^m}),  \bar\mu_{t_N}^M\right) ^2 \right]^{\frac{1}{2}} + \sE \Big[ \frac{1}{M} \sum_{k=1}^M \left\|X^k - \bar\cX^k \right\|^2_2 \Big]^{\frac{1}{2}}\Big)  \\
& \overset{(b)}{\leq} C 
\sE \!\left[\cWtwo\left(\cL({\bar\cX^m}),  \bar\mu_{t_N}^M\right) ^2 \right]^{\frac{1}{2}}
+ C\max_{n \in [N]} \sE\!\left[ \cWtwo\left(\cL (\bar\cX^m_{\cdot \wedge t_n}), \bar\mu_{t_n}^M\right)^2 \right]^{\frac{1}{2}}
\overset{(c)}{\leq} C M^{- \frac{\gamma(q,\tilde{d})}{2}}. 
    \end{aligned}
    \end{equation*}
where 
(a) follows from \eqref{eq:moment_bound_particle_approx}, 
(b) follows from Lemma \ref{lemma:delta_X_B_Sigma}, and (c) follows from Lemma \ref{lemma:wasserstein_bound}.

  Similar bound also holds for $ \overline{\mathfrak{S}}_n.$
       By plugging in the above inequality and \eqref{eq:g_minus_bar_g1} to \eqref{eq:g_minus_bar_g}, we have
    $
    \sE \left[ \left| \Terminal\left(X^{m},  \mu_{t_N}^M\right)  - \Terminal\left(\bar\cX^{m}, \mathcal{L} \left(\bar\cX^{m}\right)\right)\right| \right] \leq CM^{- \frac{\gamma(q,\tilde{d})}{2}}.
    $
Note that similar estimate also holds for the running cost term $f$. Combining together, we have $|\bar J(0, \mu, \fb) - J^M (0,\mu,\fb) | \leq C M^{-\frac{\gamma(q,\tilde d)}{2}} $.

    Finally, fix $\vep > 0.$ Denote $\psi^W_1$ and $\psi^W_2$ to be the $\vep$-optimal solution to $\bar J$ and $J^M$ respectively. Then
  $$
   V^M_0(\mu) \leq J^M(0,\mu,\psi^W_1) \leq  \bar J(0,\mu,\psi^{W_1}) + C M^{-\gamma(q,\tilde{d})/2}\leq {\widehat{V}(0,\mu)} +  CM^{-\gamma(q,\tilde{d})/2}+ \vep.
  $$
  Similarly, 
  $$
    \widehat{V}(0,\mu) \leq\bar J(0,\mu,\psi^{W_2})\leq  J^M(0,\mu,\psi^W_2)+ C M^{- \gamma(q,\tilde{d})/2} \leq V^M_0(\mu)  + C  M^{-\gamma(q,\tilde{d})/2} +\vep.
  $$
  The proof is finished by combing the above two inequalities and taking $\vep \downarrow 0$.  
\eproof

\bibliographystyle{alpha}
\bibliography{reference}

\appendix

\section{LQ example with analytical solution}\label{appendix:LQ}
Let $Z_t=(X_t,Y_t)^\top\in\mathbb R^{2d}$ with $Y_t =\int_0^t e^{-\lambda(t-s)} X_s \, \d s,$ and   
\(
\d X_t=(aX_t+b\alpha_t+c\,\mathbb E[X_t])\d t+\sigma\d W_t.\)
Hence,
\[
dZ_t=(AZ_t+B\alpha_t+C\,\mathbb E[Z_t])\,dt+\Sigma\,dW_t, \quad Z_0 = (X_0, 0)^\top,
\]
with
\(
A=\begin{bmatrix}aI_d&0\\ I_d&-\lambda I_d\end{bmatrix},
B=\begin{bmatrix}bI_d\\0\end{bmatrix},
C=\begin{bmatrix}cI_d&0\\0&0\end{bmatrix}, \text{ and }
\Sigma=\begin{bmatrix}\sigma\\0\end{bmatrix}.
\)
Denote $\bar \mu = \int z \mu(\d z)$ and 
consider the quadratic costs
\(f(t,z,\mu,\alpha)=\tfrac12\!\left(z^\top Q_t z+\bar\mu^\top P_t\bar\mu+\alpha^\top R_t\alpha\right)+v_t^\top z, \text{ and }
g(z,\mu)=\tfrac12\!\left(z^\top \bar Q z+\bar\mu^\top \bar P\bar\mu\right)+\bar v^\top z,
\)
where $Q_t,P_t,\bar Q,\bar P,R_t$ are symmetric and $R_t\succ0$. 
\begin{theorem}
    Define $w: [0,T] \times \cP(\sR^{2d})\ra \sR$ as
  $  w(t,\mu)=\tfrac12\!\int_{\sR^{2d}} z^\top M_t z\,\mu(\d z)+\tfrac12\bar\mu^\top N_t\bar\mu+\rho_t^\top\bar\mu+h_t.$
If $(M_t,N_t,\rho_t,h_t)$ solve the Riccati system
\begin{equation}\label{eq:ODE_lq}
    \begin{cases}
0= \dot M + A^\top M_t+M_tA+Q_t-M_tBR_t^{-1}B^\top M_t,\\
0=\dot N_t + (A+C)^\top N_t+N_t(A+C)+C^\top M_t+M_tC+P_t\\
\qquad\quad -N_tBR_t^{-1}B^\top N_t-M_tBR_t^{-1}B^\top N_t-N_tBR_t^{-1}B^\top M_t,\\
0=\dot\rho_t + (A+C)^\top\rho_t+v_t-(M_t+N_t)BR_t^{-1}B^\top\rho_t,\\
0=\dot h_t + \tfrac12\mathrm{Tr}(\Sigma\Sigma^\top M_t)-\tfrac12\rho_t^\top BR_t^{-1}B^\top\rho_t,
\end{cases}
\end{equation}
with terminal data $M_T=\bar Q$, $N_T=\bar P$, $\rho_T=\bar v$, $h_T=0$, then $w$ solves the HJB equation and equals the value function:
\(
V(t,\mu)=w(t,\mu).
\)
The optimal feedback control is
\begin{equation}\label{eq:alpha_LQ}
    \alpha^*(t,z,\mu)=-R_t^{-1}B^\top\big(M_tz+N_t\bar\mu+\rho_t\big).
\end{equation}
\end{theorem}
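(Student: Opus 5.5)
The plan is a verification argument for the LQ mean-field HJB in the lifted (Lions) form. Since the state $Z_t=(X_t,Y_t)$ is Markovian, the path dependence in $Y$ disappears once we work with $Z$. The problem then becomes a standard Markovian McKean--Vlasov LQ problem on $\R^{2d}$. Its value function $V(t,\mu)$ should satisfy the Bellman equation
\[
\partial_t w + \int \inf_{a}\Big\{ (Az+Ba+C\bar\mu)^\top \partial_\mu w(t,\mu)(z) + \tfrac12 a^\top R_t a\Big\}\mu(\d z) + \tfrac12\int \mathrm{Tr}\big(\Sigma\Sigma^\top \partial_z\partial_\mu w(t,\mu)(z)\big)\mu(\d z) + \int \big(\tfrac12 z^\top Q_t z + v_t^\top z\big)\mu(\d z)+\tfrac12\bar\mu^\top P_t\bar\mu =0,
\]
with terminal condition $w(T,\mu)=\int g(z,\mu)\mu(\d z)$.

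\textbf{Step 1: verify that the ansatz solves the HJB.} For the quadratic ansatz, the Lions derivative is $\partial_\mu w(t,\mu)(z)=M_t z+N_t\bar\mu+\rho_t$ and $\partial_z\partial_\mu w=M_t$. The pointwise minimiser in $a$ is therefore exactly \eqref{eq:alpha_LQ}, with minimum value $-\tfrac12|R_t^{-1/2}B^\top(M_tz+N_t\bar\mu+\rho_t)|^2$. Substituting and integrating against $\mu$, the equation splits into four groups of terms:
\begin{itemize}
\item terms of the form $\int z^\top(\cdot)z\,\mu(\d z)$, which give the $M$ equation;
\item terms quadratic in $\bar\mu$, which come from cross terms such as $\int z^\top M_tC\bar\mu\,\d\mu = \bar\mu^\top M_tC\bar\mu$ and from the expansion of the minimum; these give the $N$ equation;
\item linear terms in $\bar\mu$, which give the $\rho$ equation;
\item constants, which give the $h$ equation.
\end{itemize}
The bookkeeping in the second group is the delicate algebra. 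Any cross term $\int z\,\mu(\d z)^\top(\cdot)\bar\mu$ collapses to $\bar\mu^\top(\cdot)\bar\mu$, and one must symmetrise matrices carefully. This should reproduce the $N$-equation with the three $BR^{-1}B^\top$ products. The terminal data match $g$ directly.

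\textbf{Step 2: verification via It\^o's formula along flows of measures.} Fix an admissible $\alpha$ and let $\mu_t=\cL(Z_t)$. Because $w$ is quadratic, one can avoid general Lions calculus. Write
\[
w(t,\mu_t)=\tfrac12\E[Z_t^\top M_tZ_t]+\tfrac12\E[Z_t]^\top N_t\E[Z_t]+\rho_t^\top\E[Z_t]+h_t .
\]
Differentiate each piece in $t$: apply classical It\^o to $Z_t^\top M_tZ_t$ and take expectation, and use the ODE for $\E[Z_t]$. Completing the square in $\alpha_t$ using the Riccati system yields
\[
\frac{\d}{\d t}w(t,\mu_t) + \E[f(t,Z_t,\mu_t,\alpha_t)] = \tfrac12\E\big[|R_t^{1/2}(\alpha_t-\alpha^*(t,Z_t,\mu_t))|^2\big]\ge0 .
\]
Integrating from $t$ to $T$ gives $w(t,\mu)\le J(t,\mu,\alpha)$, with equality for $\alpha=\alpha^*$. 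The hardest part is the mean-field term: the $N$ and cross terms involve $\E[\alpha_t]$, so the square completion must be done jointly on $\alpha_t-\E[\alpha_t]$ and $\E[\alpha_t]$. I would first try writing $\alpha_t=(\alpha_t-\E\alpha_t)+\E\alpha_t$ and checking that the $M$-part controls the fluctuation while $M+N$ controls the mean. This requires $R_t\succ0$ and integrability, which follows from the linear growth of $\alpha^*$.

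\textbf{Step 3: admissibility and conclusion.} Under $\alpha^*$ the closed-loop McKean--Vlasov SDE is linear with bounded coefficients, so it is well posed and square integrable, and $\alpha^*$ is admissible. Therefore $V(t,\mu)=w(t,\mu)$ and $\alpha^*$ is optimal. Finally, the state $Y$ is a functional of the path of $X$, and costs on $(X,Y)$ are path-dependent costs in the original framework. This identifies the value with that of the path-dependent problem \eqref{eq:objective_AT}, with the variance term encoded through $\bar P$ and $\bar Q$.
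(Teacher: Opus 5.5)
Your proposal is correct and follows essentially the paper's verification argument: the Lions derivatives $\partial_\mu w(t,\mu)(z)=M_tz+N_t\bar\mu+\rho_t$ and $\partial_z\partial_\mu w=M_t$, pointwise minimisation in $a$ giving $\alpha^*$ with the Riccati system making the HJB residual vanish, and then It\^o along $t\mapsto\cL(Z^\alpha_t)$ to obtain $w(t,\mu)\le J(t,\mu,\alpha)$, with equality at $\alpha^*$. The differences are minor: you compute $\frac{\d}{\d t}w(t,\mu_t)$ by classical It\^o on $Z_t^\top M_tZ_t$ plus the ODE for $\E[Z_t]$ rather than citing the It\^o formula for flows of measures, and you state the exact identity $\frac{\d}{\d t}w(t,\mu_t)+\E[f(t,Z_t,\mu_t,\alpha_t)]=\frac12\E[(\alpha_t-\alpha^*_t)^\top R_t(\alpha_t-\alpha^*_t)]$ with $\alpha^*_t:=\alpha^*(t,Z_t,\mu_t)$; this identity follows from completing the square pointwise in $Z_t$, because $N_t\bar\mu_t+\rho_t$ is deterministic, so the mean/fluctuation splitting of $\alpha_t$ you anticipate is not needed.
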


\proof
The Lions derivatives of $w$ are given by
\[
\partial_\mu w(t,\mu)(z)
=
M_tz+N_t\bar\mu+\rho_t,
\qquad
\partial_z\partial_\mu w(t,\mu)
=
M_t.
\]
We define the Hamiltonian $H$ by
\[
\begin{aligned}
H(t,z, \mu,\a)
&=
-\partial_t w(t,\mu)
-
\mu\Big(
(Az+C\bar\mu)^\top
(M_tz+N_t\bar\mu+\rho_t)
\Big)
-
\frac12
\operatorname{Tr}(\Sigma\Sigma^\top M_t)
\\
&\quad
-
\frac12
\int_{\sR^{2d}} z^\top Q_tz\,\mu(\d z)
-
\frac12
\bar\mu^\top P_t\bar\mu
-
v_t^\top\bar\mu
\\
&\quad
-
\mu\Big(
\alpha^\top B^\top
(M_t z+N_t\bar\mu+\rho_t)
+
\frac12\alpha^\top R_t\alpha
\Big) .
\end{aligned}
\]
The supremum of $H$ over $A$ is attained at $\a^*$ in \eqref{eq:alpha_LQ}. With the coefficient of $w$ satisfying \eqref{eq:ODE_lq}, it can be verified that $H(t,z,\mu,
\a^*(t,z,\mu)) = 0.$

Fix $\a \in \cA,$ an initial time $t$ and an initial distribution $\mu.$ 
By Ito's formula on flows of measure \cite{Guo2023ito}., 
we have
\begin{align*}
    w(T, \cL(Z_T^\a)) -  w(t, \mu) =&\sE \Big[\int_{t}^T \partial_t w(s,\cL(Z_s^\a)) + (M_s Z_s^\a + N_s \sE[Z_s^\a] + \rho_s)^\top (A Z_s^\a + B \a + C \sE[Z_s^\a])  \\
    & \quad + \frac{1}{2} \Tr(\Sigma\Sigma^\top M_s)\Big] \geq \sE [\int_t^T - f(s, Z_s^\a, \cL(Z_s^\a), \a_s)]
\end{align*}
Therefore,\[
w(t,\mu)
\leq
\mathbb E
\left[
\int_t^T
f(s,Z_s^\alpha,\cL(Z_s^\alpha),\alpha_s)\,\d s
+
g(Z_T^\alpha,\cL(Z_T^\alpha))
\right]
=
J(t,\mu,\alpha).
\]
Since \(\alpha\in\mathcal A\) was arbitrary, we get
\(
w(t,\mu)\leq V(t,\mu).
\)
For \(\alpha=\alpha^*\), the equality is attained for the above argument. Hence
\(
w(t,\mu)=V(t,\mu).
\)

\end{document}